\pgfplotsset{compat=newest}
\colorlet{color1}{blue}
\colorlet{color2}{red!50!black}
\definecolor{ivory}{RGB}{218,215,203}
\definecolor{cuhkp}{RGB}{98,56,105} 	
\definecolor{cuhkpl}{RGB}{152,24,147} 	
\definecolor{cuhkb}{RGB}{219,160,1} 	
\definecolor{cuhkbd}{RGB}{178,129,0} 	
\definecolor{cuhkr}{RGB}{88,35,155}  	
\definecolor{mydarkblue}{rgb}{0,0.08,0.45} 
\crefname{section}{section}{sections}
\crefname{subsection}{subsection}{subsections}
\Crefname{figure}{Figure}{Figures}
\theoremstyle{plain}
\newtheorem{thm}{Theorem}[section]
\newtheorem{lemma}[thm]{Lemma}
\newtheorem{corollary}[thm]{Corollary}
\theoremstyle{plain}
\newtheorem{defn}[thm]{Definition}
\theoremstyle{definition}
\newtheorem{remark}[thm]{Remark}
\newcolumntype{C}[1]{>{\centering\arraybackslash}p{#1}}
\newcommand{\e}{\begin{equation}}
\newcommand{\ee}{\end{equation}}
\newcommand{\bmat}{\begin{bmatrix}}
\newcommand{\emat}{\end{bmatrix}}
\newcommand{\btab}{\begin{tabular}}
\newcommand{\etab}{\end{tabular}}
\newcommand{\Prob}{\mathbb{P}}
\newcommand{\Exp}{\mathbb{E}}
\newcommand{\vct}[1]{\boldsymbol{#1}}
\newcommand{\mtx}[1]{\boldsymbol{#1}}
\DeclareMathOperator*{\argmin}{\text{argmin}}
\newcommand{\prox}{\mathrm{prox}}
\newcommand{\env}{\mathrm{env}}
\newcommand{\dom}{\operatorname{dom}}
\newcommand{\R}{\mathbb{R}}
\newcommand{\Rn}{\mathbb{R}^n}
\newcommand{\N}{\mathbb{N}}
\newcommand{\Rex}{(-\infty,\infty]}
\newcommand{\iprod}[2]{\langle #1, #2 \rangle}
\newcommand{\calF}{\mathcal{F}}
\newcommand{\calO}{\mathcal{O}}
\newcommand{\RR}{{\mathsf{RR}}}
\newcommand{\PSGD}{\mathsf{prox}\text{-}\mathsf{SGD}}
\newcommand{\SGD}{\mathsf{SGD}}
\newcommand{\SMM}{\mathsf{SMM}}
\newcommand{\vg}{\vct{g}}
\newcommand{\vp}{\vct{p}}
\newcommand{\vs}{\vct{s}}
\newcommand{\vv}{\vct{v}}
\newcommand{\vx}{\vct{x}}
\newcommand{\vy}{\vct{y}}
\newcommand{\vPhi}{\vct{\Phi}}
\newcommand{\bA}{\vct{A}}
\newcommand{\bB}{\vct{B}}
\newcommand{\bM}{\vct{M}}
\newcommand{\bE}{\vct{E}}
\newcommand{\mA}{\mtx{A}}
\newcommand{\mB}{\mtx{B}}
\definecolor{redp}{RGB}{255,0,0}
\definecolor{greenp}{RGB}{128,230,0}
\newcommand{\cmark}{\color{greenp}\checkmark}%
\newcommand{\xmark}{\color{redp}\ding{55}}
\title{A Unified Convergence Theorem for Stochastic Optimization Methods}
\author{%
  Xiao Li\\
  School of Data Science (SDS)\\
  The Chinese University of Hong Kong, Shenzhen\\
  Shenzhen, China \\
  \texttt{lixiao@cuhk.edu.cn} 
   \And
   Andre Milzarek \\
   School of Data Science (SDS) \\
   The Chinese University of Hong Kong, Shenzhen \\
   Shenzhen Research Institute of Big Data (SRIBD) \\
   Shenzhen, China \\
   \texttt{andremilzarek@cuhk.edu.cn} 
}
\begin{document}

\maketitle

\begin{abstract}
  In this work, we provide a fundamental unified convergence theorem used for deriving expected and almost sure convergence results for a series of stochastic optimization methods. Our unified theorem only requires to verify several representative conditions and is not tailored to any specific algorithm. As a direct application, we recover expected and almost sure convergence results of the stochastic gradient method ($\SGD$) and random reshuffling ($\RR$) under more general settings. Moreover, we establish new expected and almost sure convergence results for the stochastic proximal gradient method ($\PSGD$) and stochastic model-based methods for nonsmooth nonconvex optimization problems. These applications reveal that our unified theorem provides a plugin-type convergence analysis and strong convergence guarantees for a wide class of stochastic optimization methods. 
\end{abstract}

\section{Introduction}\label{sec:intro}
Stochastic optimization methods are widely used to solve stochastic optimization problems and empirical risk minimization, serving as one of the foundations of machine learning. Among the many different stochastic methods, the most classic one is the stochastic gradient method ($\SGD$), which dates back to Robbins and Monro  \cite{robbins1951}. If the problem at hand has a finite-sum structure, then another popular stochastic method is  random reshuffling ($\RR$) \cite{gurbu2019}.    When the objective function has a composite form or is weakly convex (nonsmooth and nonconvex), then the stochastic proximal gradient method ($\PSGD$) and stochastic model-based algorithms are the most typical approaches \cite{GhaLanZha16,DavDru19}.
Apart from the mentioned stochastic methods, there are many others like $\SGD$ with momentum, Adam, stochastic higher order methods, etc. In this work, our goal is to establish and understand fundamental \emph{convergence} properties of these stochastic optimization methods via a novel unified convergence framework. 

\textbf{Motivations.} 
Suppose we apply $\SGD$ to minimize a smooth nonconvex function $f$. $\SGD$ generates a sequence of iterates $\{\vx^k\}_{k\geq 0}$, which is a stochastic process due to the randomness of the algorithm and the utilized stochastic oracles. The most commonly seen `convergence result' for $\SGD$ is the \emph{expected iteration complexity}, which typically takes the form \cite{ghadimi2013}
%
\e \min_{k=0,\ldots, T} \ \Exp[\|\nabla f(\vx^k)\|^2] \leq \calO\left(\frac{1}{\sqrt{T+1}}\right) \quad \text{or} \quad \Exp[\|\nabla f(\vx^{\bar k})\|^2] \leq \calO\left(\frac{1}{\sqrt{T+1}}\right), \label{eq:complexity} \ee
where $T$ denotes the total number of iterations and $\bar k$ is an index sampled uniformly at random from $\{0,\ldots, T\}$.  Note that we ignored some higher-order convergence terms and constants to ease the presentation. Complexity results are integral to understand core properties and progress of the algorithm during the first $T$ iterations, while the asymptotic convergence behavior plays an equally important role as it  characterizes whether an algorithm can eventually approach an exact stationary point or not.  We refer to \Cref{appen:motivations} for additional motivational background for studying asymptotic convergence properties of stochastic optimization methods. Here, an \emph{expected convergence result}, associated with the nonconvex minimization problem $\min_{\vx} f(\vx)$,  has the form 
\e
\lim_{k \to \infty} \Exp[\|\nabla f(\vx^{ k})\|] = 0. \label{eq:hello-expectation}
\ee
Intuitively, it should be possible to derive expected convergence from the expected iteration complexity \eqref{eq:complexity} by letting $T\to \infty$. However, this is not the case as the `$\min$' operator and the sampled $\bar k$ are not well defined or become meaningless when $T$ goes to $\infty$.

The above results are stated in expectation and describe the behavior of the algorithm by averaging  infinitely many runs. Though this is an important convergence measure, in practical situations the algorithm is often only run once and the last iterate is returned as a solution. This observation motivates and necessitates \emph{almost sure convergence results}, which establish convergence with probability $1$ for a single run of the stochastic method:
\e
\lim_{k \to \infty} \|\nabla f(\vx^{ k})\| = 0 \quad \text{almost surely}. \label{eq:hello-almost-surely}
\ee
\vspace{-0.4cm}

\textbf{Backgrounds.}  Expected and almost sure convergence results have been extensively studied for convex optimization; see, e.g., \cite{culioli1990,polyak1992,shamir2013,zhang2004,bottou2003,sebbouh2021}. Almost sure convergence of $\SGD$ for minimizing a smooth nonconvex function $f$ was provided in the seminal work \cite{BerTsi00} using very standard assumptions, i.e., Lipschitz continuous $\nabla f$ and bounded variance.   Under the same conditions,  the same almost sure convergence of $\SGD$ was established in  \cite{Orabona20} based on a much simpler argument than that of \cite{BerTsi00}. A weaker `$\liminf$'-type almost sure convergence result for $\SGD$ with AdaGrad step sizes was shown in \cite{li2019con}.   {Recently, the work \cite{mertikopoulos2020} derives almost sure convergence of $\SGD$ under the assumptions that $f$ and $\nabla f$  are Lipschitz continuous, $f$ is coercive, $f$ is not asymptotically flat, and the $\upsilon$-th moment of the stochastic error is bounded with $\upsilon\geq 2$. This result relies on stronger assumptions than the base results in \cite{BerTsi00}.  Nonetheless, it allows more aggressive diminishing step sizes if $\upsilon>2$.}
 {Apart from standard $\SGD$, almost sure convergence of different respective variants for min-max problems was discussed in \cite{hsieh2021limits}.}
In terms of expected convergence, the work \cite{BotCurNoc18} showed $\lim_{k \to \infty} \Exp[\|\nabla f(\vx^{ k})\|] = 0$ under the additional assumptions that $f$ is twice continuously differentiable and the multiplication of the Hessian and gradient $\nabla ^2f (\vx) \nabla f(\vx)$ is Lipschitz continuous. 

Though the convergence of $\SGD$ is well-understood and a classical topic, asymptotic convergence results of the type \eqref{eq:hello-expectation} and \eqref{eq:hello-almost-surely} often require a careful and separate analysis for other stochastic optimization methods --- especially when the objective function is simultaneously nonsmooth and nonconvex. In fact and as outlined, a direct transition from the more common complexity results \eqref{eq:complexity} to the full convergence results \eqref{eq:hello-expectation} and \eqref{eq:hello-almost-surely} is often not possible without further investigation.  

\textbf{Main contributions.} 
We provide a fundamental \emph{unified convergence theorem} (see \Cref{thm:convergence theorem}) for deriving both expected and almost sure convergence of stochastic optimization methods. Our theorem is not tailored to any specific algorithm, instead it incorporates several abstract conditions that suit a vast and general class of problem structures and algorithms. The proof of this theorem is elementary.  

We then apply our novel theoretical framework to several classical stochastic optimization methods to recover existing and to establish new convergence results.
Specifically, we recover expected and almost sure convergence results for $\SGD$ and $\RR$. Though these results are largely known in the literature, we derive unified and slightly stronger results under a general ABC condition \cite{lei2019stochastic,khaled2020better} rather than the standard  bounded variance assumption. We also remove the stringent assumption used in  \cite{BotCurNoc18} to show \eqref{eq:hello-expectation} for $\SGD$. 
As a core application of our framework, we derive expected and almost sure convergence results for $\PSGD$ in the nonconvex setting and under the more general ABC condition and for stochastic model-based methods under very standard assumptions. In particular, we show that the iterates $\{\vx^k\}_{k \geq 0}$ generated by $\PSGD$ and other stochastic model-based methods will approach the set of stationary points almost surely and in an expectation sense. These results are \emph{new} to our knowledge (see also \Cref{sec:literature} for further discussion).

The above applications illustrate the general plugin-type purpose of our unified convergence analysis framework. Based on the given recursion and certain properties of the algorithmic update, we can derive broad convergence results by utilizing our theorem, which can significantly simplify the convergence analysis of stochastic optimization methods; see \Cref{sec:steps} for a summary.

\section{A unified convergence theorem}
Throughout this work, let $(\Omega,\mathcal F,\{\mathcal F_k\}_{k\geq 0},\Prob)$ be a filtered probability space and let us assume that the sequence of iterates $\{\vx^k\}_{k\geq 0}$ is adapted to the filtration $\{\mathcal F_k\}_{k\geq 0}$, i.e., each of the random vectors $\vx^k : \Omega \to \Rn$ is $\mathcal F_k$-measurable.

In this section, we present a unified convergence theorem for the sequence $\{\vx^k\}_{k \geq 0}$ based on an abstract convergence measure $\vPhi$. {To make the abstract convergence theorem more accessible, the readers may momentarily regard $\vPhi$ and $\{\mu_k\}_{k\geq 0}$ as $\nabla f$ and the sequence related to the step sizes, respectively.} We then present the main steps for showing the convergence of a stochastic optimization method by following a step-by-step verification of the conditions in  our unified convergence theorem.

\begin{thm}\label{thm:convergence theorem}
	 Let the mapping $\vPhi: \R^n \rightarrow \R^m$ and the sequences $\{\vx^k\}_{k\geq 0}\subseteq \R^n$ and $\{\mu_k\}_{k\geq 0} \subseteq \R_{++}$ be given. Consider the following conditions:    
	\begin{enumerate}[label=\textup{\textrm{(P.\arabic*)}},topsep=0pt,itemsep=0ex,partopsep=0ex]
		\item \label{P1} The function $\vPhi$ is ${\sf L}_{\Phi}$-Lipschitz continuous for some ${\sf L}_{\Phi}>0$, i.e., we have $\|\vPhi(\vx) - \vPhi(\vy)\| \leq  {\sf L}_{\Phi} \|\vx-\vy\|$ for all $\vx,\vy \in \Rn$. 
		\item \label{P2} There exists a constant $a > 0$ such that $\sum_{k=0}^{\infty} \ \mu_k \, \Exp[\|\vPhi(\vx^k) \|^a]  < \infty$.
	\end{enumerate}
	The following statements are valid: 
	\begin{enumerate}[label=\textup{\textrm{(\roman*)}},topsep=0pt,itemsep=0ex,partopsep=0ex]
		\item Let the conditions \ref{P1}--\ref{P2} be satisfied and suppose further that
		\begin{enumerate}[label=\textup{\textrm{(P.\arabic*)}},topsep=0pt,itemsep=0ex,partopsep=0ex,start=3]
		\item \label{P3} There exist  constants ${\sf A}, {\sf B}, b \geq 0$ and $ p_1, p_2, q >0$ such that 
		\[ \Exp[\|\vx^{k+1} - \vx^k\|^q] \leq  {\sf A} \mu_k^{p_1} \ + \ {\sf B} \mu_k^{p_2}  \, \Exp[\|\vPhi(\vx^k) \|^b]. \]	 
		 \item \label{P4} The sequence $\{\mu_k\}_{k\geq 0}$  and the  parameters $a, b, q, p_1, p_2$ satisfy
		 \[
		  \{\mu_k\}_{k\geq 0} \; \text{is bounded}, \quad {\sum}_{k=0}^{\infty} \ \mu_k = \infty, \quad \text{and} \quad a, q \geq 1, \;\; a \geq b,  \;\; p_1, p_2 \geq q. 
		  \]
    	\end{enumerate}
        Then, it holds that $\lim_{k \to \infty} \Exp[\|\vPhi(\vx^k)\|] = 0$. 
		\item Let the properties \ref{P1}--\ref{P2} hold and assume further that 
		\begin{enumerate}[label=\textup{\textrm{(P.\arabic*${}^\prime$)}},topsep=0pt,itemsep=0ex,partopsep=0ex,start=3]
			\item \label{P3'} There exist constants ${\sf A}, b \geq 0$, $p_1, p_2, q > 0$ and random vectors $\bA_k, \bB_k : \Omega \to \R^n$ such that
			\[ \vx^{k+1} = \vx^k + \mu_k^{p_1} \bA_k + \mu_k^{p_2} \bB_k \]
			and for all $k$,  $\bA_k, \bB_k$ are $\mathcal F_{k+1}$-measurable and we have $\Exp[\bA_k \mid \mathcal F_k] = 0$ almost surely, $\Exp[\|\bA_k\|^q] \leq {\sf A}$, and $\limsup_{k \to \infty} \|\bB_k\|^q /(1+\|\vPhi(\vx^k)\|^b) < \infty$ almost surely.
			\item \label{P4'} The sequence $\{\mu_k\}_{k\geq 0}$  and the  parameters $a, b, q, p_1, p_2$ satisfy $\mu_k \to 0$,
			\[
			{\sum}_{k=0}^{\infty} \ \mu_k = \infty, \quad {\sum}_{k=0}^\infty \, \mu_k^{2p_1} < \infty,   \quad  \text{and} \quad q \geq 2, \;\; qa \geq b, \ \ p_1 > \frac12, \;\; p_2 \geq 1.
			\]
		\end{enumerate}
		Then, it holds that $\lim_{k \to \infty} \|\vPhi(\vx^k)\| = 0$ almost surely. 
	\end{enumerate}
\end{thm}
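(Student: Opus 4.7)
The plan is to prove both parts via the same two-phase strategy: first establish that $\liminf_k \|\vPhi(\vx^k)\| = 0$ (in expectation for (i), almost surely for (ii)), and then rule out a positive $\limsup$ by a telescoping/crossing argument that exploits the Lipschitz bound in \ref{P1} together with the summability in \ref{P2}, the increment bound from \ref{P3} or \ref{P3'}, and --- for the almost sure statement --- the martingale structure in \ref{P3'}.

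For part (i), I would first note that $\sum_k \mu_k = \infty$ combined with \ref{P2} forces $\liminf_k \Exp[\|\vPhi(\vx^k)\|^a] = 0$, and Jensen (with $a \geq 1$) promotes this to $\liminf_k v_k = 0$ for $v_k := \Exp[\|\vPhi(\vx^k)\|]$. Assume for contradiction that $\limsup_k v_k > 2\epsilon$ and extract crossing indices $\ell_j < m_j$ with $v_{\ell_j} < \epsilon$, $v_{m_j} \geq 2\epsilon$, and $v_k \geq \epsilon$ for $\ell_j < k \leq m_j$. By \ref{P1}, telescoping, Jensen with $q \geq 1$, then \ref{P3} with subadditivity $(s+t)^{1/q} \leq s^{1/q}+t^{1/q}$ and the bounds $p_1, p_2 \geq q$ with $\mu_k$ bounded,
\[
\epsilon \;\leq\; v_{m_j} - v_{\ell_j} \;\leq\; C \sum_{k=\ell_j}^{m_j-1} \mu_k \Big(1 + \Exp[\|\vPhi(\vx^k)\|^b]^{1/q}\Big).
\]
On the crossing intervals $v_k \geq \epsilon$ yields $\Exp[\|\vPhi(\vx^k)\|^a] \geq \epsilon^a$, so $\sum_{\mathrm{cross}} \mu_k \leq \epsilon^{-a}\sum_{\mathrm{cross}} \mu_k\Exp[\|\vPhi(\vx^k)\|^a]$; using $y^b \leq 1+y^a$ for $b \leq a$ and $x^{1/q} \leq 1+x$, the extra factor $\Exp[\|\vPhi(\vx^k)\|^b]^{1/q}$ is also dominated by $1+\Exp[\|\vPhi(\vx^k)\|^a]$, so the whole right-hand side is a tail of the convergent series in \ref{P2} and must vanish as $j \to \infty$, contradicting the lower bound $\epsilon$.

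For part (ii), the almost sure liminf is immediate from \ref{P2} by Tonelli: $\sum_k \mu_k \|\vPhi(\vx^k)\|^a < \infty$ a.s., hence $\liminf_k \|\vPhi(\vx^k)\| = 0$ a.s. For the crossing step I would use the decomposition
\[
\|\vx^{m_j}-\vx^{\ell_j}\| \;\leq\; \Big\|\sum_{k=\ell_j}^{m_j-1} \mu_k^{p_1} \bA_k\Big\| + \sum_{k=\ell_j}^{m_j-1} \mu_k^{p_2} \|\bB_k\|.
\]
The partial sums $M_k := \sum_{j<k}\mu_j^{p_1}\bA_j$ form a martingale w.r.t.\ $\{\mathcal F_k\}$ with $\Exp[\|M_k\|^2] \leq {\sf A}^{2/q}\sum_j\mu_j^{2p_1} < \infty$ (using $q \geq 2$ and \ref{P4'}), so Doob's $L^2$ martingale convergence theorem yields a.s.\ convergence of $M_k$; consequently the first (signed) sum equals $M_{m_j}-M_{\ell_j}$ and vanishes a.s.\ as $\ell_j \to \infty$. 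For the $\bB$ piece, $\|\vPhi(\vx^k)\| < 2\epsilon$ on the open part of each crossing, so \ref{P3'} gives a pathwise bound on $\|\bB_k\|$ there; combined with $p_2 \geq 1$, $\mu_k \to 0$, and $\sum_{\mathrm{cross}}\mu_k \leq \epsilon^{-a}\sum_k \mu_k\|\vPhi(\vx^k)\|^a < \infty$ a.s., the $j$-th crossing block of $\sum \mu_k^{p_2}\|\bB_k\|$ tends to $0$ almost surely. Together these contradict $\epsilon \leq \|\vPhi(\vx^{m_j})\|-\|\vPhi(\vx^{\ell_j})\|$.

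The main obstacle I anticipate is the apparent circularity in part (ii): the pathwise bound on $\|\bB_k\|$ on a crossing interval requires $\|\vPhi(\vx^k)\|$ to be bounded there, which is essentially what is being proved. This is resolved by construction --- $\|\vPhi(\vx^k)\| < 2\epsilon$ strictly inside $(\ell_j, m_j)$, and the right-endpoint jump at $m_j$ is a single step already controlled by the martingale convergence and by $\mu_k \to 0$ combined with $p_1 > 1/2$, $p_2 \geq 1$. A secondary delicate point is the uniform handling of the factor $\Exp[\|\vPhi(\vx^k)\|^b]^{1/q}$ in part (i), which needs both $a \geq b$ and $q \geq 1$ to collapse cleanly into the single convergent series supplied by \ref{P2}.
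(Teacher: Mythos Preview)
Your strategy matches the paper's proof almost exactly: a liminf-zero observation from \ref{P2} and $\sum_k\mu_k=\infty$, then a crossing argument using \ref{P1} and \ref{P3}/\ref{P3'}, with the $L^2$ martingale convergence theorem handling the $\bA_k$-sums in part (ii). However, there is a real gap in your part (i): your crossings go from low to high ($v_{\ell_j}<\epsilon$, $v_{m_j}\geq 2\epsilon$, $v_k\geq\epsilon$ on $(\ell_j,m_j]$), so the left endpoint $k=\ell_j$ does \emph{not} satisfy $v_k\geq\epsilon$ and hence escapes the comparison $\mu_k\leq\epsilon^{-a}\mu_k\,\Exp[\|\vPhi(\vx^k)\|^a]$. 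Since part (i) only assumes $\{\mu_k\}$ bounded (not $\mu_k\to 0$), the isolated term $\mu_{\ell_j}$ need not vanish and the contradiction fails to close. The paper avoids exactly this by reversing the crossing direction: it starts at a high index $\ell_t$ with $\Exp[\|\vPhi(\vx^{\ell_t})\|]\geq 2\delta$ and ends at a low index $u_t$ with $\Exp[\|\vPhi(\vx^{u_t})\|^a]\leq\delta^a$, keeping $\Exp[\|\vPhi(\vx^k)\|^a]>\delta^a$ on the whole telescoping range $[\ell_t,u_t)$ (the left endpoint is covered by Jensen from $2\delta$), so $\beta_t=\sum_{k=\ell_t}^{u_t-1}\mu_k\to 0$ follows cleanly.

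A minor related point in part (ii): your claim that $\|\vPhi(\vx^k)\|<2\epsilon$ on the open part of each crossing is not delivered by the construction you wrote for part (i), which only gives the \emph{lower} bound $\geq\epsilon$ there. You would need to additionally take $m_j$ as the \emph{first} index after $\ell_j$ with $\|\vPhi(\vx^{m_j})\|\geq 2\epsilon$ to secure the upper bound on $(\ell_j,m_j)$. With that fix --- and since in part (ii) you do have $\mu_k\to 0$, which absorbs the single endpoint term $\mu_{\ell_j}^{p_2}\|\bB_{\ell_j}\|$ --- your treatment of the $\bB_k$-sum (bound $\|\bB_k\|$ by a constant on the interval rather than relating it to the $a$-th moment via H\"older as the paper does) is a legitimate and slightly simpler variant.
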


The proof of \Cref{thm:convergence theorem} is elementary. We provide the core ideas here and defer its proof to \Cref{appen:proof main thm}.  Item (i) is proved by contradiction. An easy first result is $\liminf_{k\to\infty} \Exp[\|\vPhi(\vx^k)\|^a] = 0$.  We proceed and assume that $\{\Exp[\|\vPhi(\vx^k)\|]\}_{k\geq 0}$ does not converge to zero. Then, for some  $\delta > 0$, we can construct  two subsequences $\{\ell_t\}_{t \geq 0}$ and $\{u_t\}_{t \geq 0}$ such that $\ell_t < u_t$ and
$\Exp[\|\vPhi(\vx^{\ell_t})\|] \geq 2\delta$,  $\Exp[\|\vPhi(\vx^{u_t})\|^a] \leq \delta^a$, and $\Exp[\|\vPhi(\vx^k)\|^a] > \delta^a$ for all  $\ell_t < k < u_t$. Based on this construction, the conditions in the theorem, and a set of inequalities, we will eventually reach a contradiction. We notice that the Lipschitz continuity of $\vPhi$ plays a prominent role when establishing this contradiction. Our overall proof  strategy is inspired by the analysis of classical trust region-type methods,
see, e.g., \cite[Theorem 6.4.6]{ConGouToi00}.
%
For item (ii), 
we first control the stochastic behavior of the error terms $\bA_k$ by martingale convergence theory. We can then conduct sample-based arguments to derive the final result, which is essentially deterministic and hence, 
follows similar arguments to that of item (i).   

{The major application areas of our unified convergence framework comprise stochastic optimization methods that have non-vanishing stochastic errors or that utilize diminishing step sizes.} In the next subsection, we state the main steps for showing convergence of stochastic optimization methods.  This also clarifies the abstract conditions listed in the theorem.

 \subsection{The steps for showing convergence of stochastic optimization methods}\label{sec:steps}
 In order to apply the unified convergence theorem, we have to verify the conditions stated in the theorem, resulting in three main phases below. 

\textbf{Phase I: Verifying \ref{P1}--\ref{P2}.}  Conditions \ref{P1}--\ref{P2} are used for both the expected and the almost sure convergence results.    Condition \ref{P1} is a problem property and is very standard. We present the final convergence results in terms of the abstract measure $\vPhi$. This measure can be regarded as $f - f^*$ in convex optimization, {$\nabla f$} in smooth nonconvex optimization,  {the gradient} of the Moreau envelope in weakly convex optimization, etc. In all the situations, assuming Lipschitz continuity of the convergence measure $\vPhi$ is standard and is arguably a minimal assumption in order to obtain  iteration complexity and/or convergence results.   

Condition \ref{P2} is typically a result of the algorithmic property or complexity analysis. To verify this condition, one first establishes the recursion of the stochastic method, which almost always has the form
\[
\Exp[\vy_{k+1} \mid \mathcal F_k]\leq (1+\beta_k) \vy_k - \mu_k \|\vPhi(\vx^k)\|^a + \zeta_k. 
\] 
Here, $\vy_k$ is a suitable Lyapunov function measuring the (approximate) descent property of the stochastic method, $\zeta_k$ represents the error term satisfying $\sum_{k=0}^{\infty} \zeta_k <\infty$, $\beta_k$ is often related to the step sizes and satisfies $\sum_{k=0}^\infty \beta_k < \infty$. Then, applying the supermartingale convergence theorem (see \Cref{Theorem:martingale_convergece}), we obtain $\sum_{k=0}^{\infty} \ \mu_k\, \Exp[\|\vPhi(\vx^k) \|^a]  < \infty$, i.e., condition \ref{P2}. 

{Since condition \ref{P2} is typically a consequence of the underlying algorithmic recursion, one can also derive the standard finite-time complexity bound \eqref{eq:complexity} in terms of the measure $ \Exp[\|\vPhi(\vx^k) \|^a]$ based on it. Hence, non-asymptotic complexity results  are also included implicitly in our framework as a special case. To be more specific, \ref{P2} implies $\sum_{k=0}^{T} \mu_k \Exp[\|\vPhi(\vx^k) \|^a]\leq M$ for some constant $M>0$ and some total number of iterations $T$. This then yields $ \min_{0\leq k\leq T} \Exp[\|\vPhi(\vx^k) \|^a] \leq M / \sum_{k=0}^{T} \mu_k $. Note that the sequence $\{\mu_k\}_{k\geq 0}$ is often related to the step sizes. Thus, choosing the step sizes properly results in the standard finite-time complexity result.}

\textbf{Phase II: Verifying  \ref{P3}--\ref{P4} for showing expected convergence.} Condition \ref{P3} requires an upper bound on the step length of the update in terms of expectation, including upper bounds for the search direction  and the stochastic error of the algorithm. It is often related to certain bounded variance-type assumptions for analyzing stochastic methods. For instance, \ref{P3} is satisfied under the standard bounded variance assumption for $\SGD$, the more general ABC assumption for $\SGD$, the bounded stochastic subgradients assumption, etc. Condition \ref{P4} is a standard diminishing step sizes condition used in stochastic optimization. 

Then, one can apply item (i) of \Cref{thm:convergence theorem} to obtain $\Exp[\|\vPhi(\vx^k)\|] \to 0$.

\textbf{Phase III: Verifying  \ref{P3'}--\ref{P4'} for showing almost sure convergence.} Condition \ref{P3'}  is parallel to \ref{P3}.  It decomposes the update into a martingale term $\mA_k$ and a bounded error term $\mB_k$. We will see later that this condition holds true for many stochastic methods.  Though this condition requires the update to have a certain decomposable form, it indeed can be verified by bounding the step length of the update in conditional expectation, which is similar to \ref{P3}. Hence, \ref{P3'} can be interpreted as a conditional version of \ref{P3}. To see this, we can construct 
\e\label{eq:decom update}
    \vx^{k+1} = \vx^k  + \mu_{k} \cdot \underbracket{\begin{minipage}[t][3ex][t]{36ex}\centering$\frac{1}{\mu_k} \left(\vx^{k+1}-\vx^k -\Exp[\vx^{k+1} - \vx^k \mid \calF_k] \right)$\end{minipage}}_{\mA_k} + \mu_{k} \cdot \underbracket{\begin{minipage}[t][3ex][t]{21ex}\centering$\frac{1}{\mu_k} \Exp[\vx^{k+1} - \vx^k \mid \calF_k]$\end{minipage}}_{\mB_k}. 
\ee
By Jensen's inequality, we then have $\Exp[\mA_k \mid \calF_k] = 0$,
\[
 \Exp[\|\mA_k\|^q] \leq {2^{q}}{\mu_k^{-q}} \cdot \Exp[\|\vx^{k+1} - \vx^k\|^q], \quad \text{and} \quad \|\mB_k\|^q \leq {\mu_k^{-q}} \cdot \Exp[\|\vx^{k+1} - \vx^k\|^q\mid \calF_k]. 
\]
Thus, once it is possible to derive $\Exp[\|\vx^{k+1} - \vx^k\|^q\mid \calF_k] = \calO(\mu_k^q)$ in an almost sure sense, condition \ref{P3'} is verified with $p_1 = p_2 = 1$. Condition \ref{P4'}  is parallel to \ref{P4} and is standard in stochastic optimization. Application of item (ii) of \Cref{thm:convergence theorem} then yields $\|\vPhi(\vx^k)\| \to 0$ almost surely.

 In the next section, we will illustrate how to show convergence for a set of classic stochastic methods by following the above three steps.

\section{Applications to stochastic optimization methods}\label{sec:app}

\subsection{Convergence results of SGD}\label{sec:SGD}
We consider the standard $\SGD$ method for solving the smooth optimization problem $\min_{\vx \in \Rn}\,f(\vx)$, where the iteration of $\SGD$ is given by 
\e\label{eq:SGD}
\vx^{k+1} = \vx^k - \alpha_k \vg^k. 
\ee
Here, $\vg^k$ denotes a stochastic approximation of the gradient $\nabla f(\vx^k)$. We assume that each stochastic gradient $\vg^k$ is $\mathcal F_{k+1}$-measurable and that the generated stochastic process $\{\vx^k\}_{k \geq 0}$ is adapted to the filtration $\{\mathcal F_k\}_{k \geq 0}$. We consider the following standard assumptions:
\begin{enumerate}[label=\textup{\textrm{(A.\arabic*)}},topsep=0pt,itemsep=0ex,partopsep=0ex]
	\item \label{A1} The mapping $\nabla f : \Rn \to \Rn$ is Lipschitz continuous on $\Rn$ with modulus ${\sf L} > 0$.
	\item \label{A2} The objective function $f$ is bounded from below on $\Rn$, i.e., there is $\bar f$ such that $f(\vx) \geq \bar f$ for all $\vx \in \Rn$.
	\item \label{A3} Each oracle $\vg^k$ defines an unbiased estimator of $\nabla f(\vx^k)$, i.e., it holds that $\Exp[\vg^k\mid \mathcal F_k] = \nabla f(\vx^k)$ almost surely, and  there exist ${\sf C}, {\sf D} \geq 0$ such that
	\[\Exp[\|\vg^k - \nabla f(\vx^k)\|^2 \mid \mathcal F_k] \leq {\sf C} [f(\vx^k)-\bar f] + {\sf D} \quad \text{almost surely} \quad  \forall~k \in \N.  \]
	\item \label{A4} The step sizes $\{\alpha_k\}_{k \geq 0}$ satisfy $\sum_{k=0}^\infty \alpha_k = \infty$ and $\sum_{k=0}^\infty \alpha_k^2 < \infty$.
\end{enumerate}
We now derive the convergence of $\SGD$ below by setting $\vPhi \equiv \nabla f$ and $\mu_k \equiv \alpha_k$. 

\textbf{Phase I: Verifying  \ref{P1}--\ref{P2}.}
\ref{A1} verifies condition \ref{P1} with ${\sf L}_\Phi \equiv {\sf L}$.  We now check \ref{P2}. Using \ref{A2},  \ref{A3},  and a standard analysis for $\SGD$ gives the following recursion (see \Cref{appen:SGD} for the full derivation): 
\e \label{eq:SGD recursion}
 \Exp[f(\vx^{k+1}) - \bar f \mid \mathcal F_k] \leq \left(1 + \frac{{\sf LC}\alpha_k^2}{2}\right)[f(\vx^k) - \bar f] - \alpha_k \left(1-\frac{{\sf L}\alpha_k}{2}\right) \|\nabla f(\vx^k)\|^2 + \frac{{\sf LD}\alpha_k^2}{2}. 
\ee
Taking total expectation, using \ref{A4}, and applying the supermartingale convergence theorem (\Cref{Theorem:martingale_convergece}) gives $\sum_{k=0}^\infty \alpha_k \Exp[\|\nabla f(\vx^k)\|^2]<\infty$. Furthermore, the sequence $\{\Exp[f(\vx^{k})]\}_{k \geq 0}$ converges to some finite value. This verifies \ref{P2} with $a = 2$. 

\textbf{Phase II: Verifying  \ref{P3}--\ref{P4} for showing expected convergence.}
 For \ref{P3}, we have by \eqref{eq:SGD} and \ref{A3} that 
\[\Exp[\|\vx^{k+1}-\vx^k\|^2] \leq \alpha_k^2 \Exp[ \|\nabla f(\vx^k)\|^2] + {\sf C}\alpha_k^2\Exp[f(\vx^k)-\bar f] + {\sf D}\alpha_k^2. \] 
Due to the convergence of $\{\Exp[f(\vx^{k})]\}_{k \geq 0}$, there exists ${\sf F}$ such that $\Exp[f(\vx^k)-\bar f] \leq {\sf F}$ for all $k$.
Thus, condition \ref{P3} holds with $q=2$, ${\sf A} =  {\sf C}{\sf F} + {\sf D}$, $p_1 =2$, ${\sf B} = 1$, $p_2 = 2$, and $b = 2$.
Condition \ref{P4} is verified by \ref{A4} and the previous parameters choices.  Therefore, we can apply \Cref{thm:convergence theorem} to deduce $\Exp[\|\nabla f(\vx^k)\|] \to 0$. 

\textbf{Phase III: Verifying  \ref{P3'}--\ref{P4'} for showing almost sure convergence.}
 For \ref{P3'}, it follows from the update \eqref{eq:SGD} that 
\[
     \vx^{k+1} = \vx^k - \alpha_k (\vg^k - \nabla f(\vx^k)) - \alpha_k \nabla f(\vx^k). 
\]
We have $p_1 = 1$, $\mA_k = \vg^k - \nabla f(\vx^k)$, $p_2 = 1$, and $\mB_k = \nabla f(\vx^k)$. Using \ref{A2},  \ref{A3},  $\Exp[f(\vx^{k}) - \bar f] \leq {\sf F}$, and choosing any $q = b>0$ establishes \ref{P3'}. As before, condition \ref{P4'} follows from \ref{A4} and the previous parameters choices. Applying \cref{thm:convergence theorem} yields $\|\nabla f(\vx^k)\| \to 0$ almost surely. 

Finally, we summarize the above results in the following corollary.
\begin{corollary}\label{thm:SGD}
 Let us consider $\SGD$ \eqref{eq:SGD} for smooth nonconvex optimization problems under \ref{A1}--\ref{A4}. Then, we have $\lim_{k \to \infty} \Exp[\|\nabla f(\vx^k)\|] = 0$ and $\lim_{k \to \infty} \|\nabla f(\vx^k)\| = 0$ almost surely. 
\end{corollary}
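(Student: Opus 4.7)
The plan is to apply \Cref{thm:convergence theorem} with the identifications $\vPhi \equiv \nabla f$ and $\mu_k \equiv \alpha_k$, following the three-phase template described in \Cref{sec:steps}. Most of the work is already laid out in the paragraphs preceding the corollary; the proof is essentially a bookkeeping exercise to confirm that the abstract hypotheses \ref{P1}--\ref{P4} and \ref{P1}, \ref{P2}, \ref{P3'}, \ref{P4'} are genuinely satisfied by $\SGD$ under \ref{A1}--\ref{A4}.

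First I would handle Phase I. Condition \ref{P1} is immediate from \ref{A1} with ${\sf L}_\Phi = {\sf L}$. For \ref{P2}, I would start from the descent inequality \eqref{eq:SGD recursion}, whose derivation relies on the standard ${\sf L}$-smooth descent lemma combined with the ABC-type bound in \ref{A3}. Taking total expectation of \eqref{eq:SGD recursion} yields a recursion of the form $\Exp[f(\vx^{k+1}) - \bar f] \leq (1 + \beta_k)\Exp[f(\vx^k) - \bar f] - \tilde \mu_k \Exp[\|\nabla f(\vx^k)\|^2] + \zeta_k$ with $\beta_k = {\sf LC}\alpha_k^2/2$, $\tilde\mu_k = \alpha_k(1 - {\sf L}\alpha_k/2)$, and $\zeta_k = {\sf LD}\alpha_k^2/2$. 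Under \ref{A4} we have $\sum_k \beta_k, \sum_k \zeta_k < \infty$ and $\tilde\mu_k \geq \alpha_k/2$ for all $k$ sufficiently large. The supermartingale convergence theorem (\Cref{Theorem:martingale_convergece} applied to the deterministic sequence $\Exp[f(\vx^k) - \bar f]$) then yields both $\sum_k \alpha_k \Exp[\|\nabla f(\vx^k)\|^2] < \infty$, which is \ref{P2} with $a = 2$, and the convergence of $\{\Exp[f(\vx^k)]\}_{k \geq 0}$ to some finite limit. The latter gives a uniform bound $\Exp[f(\vx^k) - \bar f] \leq {\sf F}$ which will be used in both remaining phases.

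Next, for Phase II I would bound $\Exp[\|\vx^{k+1} - \vx^k\|^2] = \alpha_k^2 \Exp[\|\vg^k\|^2]$, splitting $\|\vg^k\|^2 \leq 2\|\vg^k - \nabla f(\vx^k)\|^2 + 2\|\nabla f(\vx^k)\|^2$ and applying \ref{A3} together with the uniform bound on $\Exp[f(\vx^k) - \bar f]$. This gives \ref{P3} with $q = 2$, $p_1 = p_2 = 2$, $b = 2$, and constants ${\sf A}, {\sf B}$ as indicated in the excerpt; \ref{P4} then follows directly from \ref{A4} and these parameters. Invoking item (i) of \Cref{thm:convergence theorem} delivers $\Exp[\|\nabla f(\vx^k)\|] \to 0$.

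Finally, Phase III rewrites the update as $\vx^{k+1} = \vx^k + \alpha_k \bA_k + \alpha_k \bB_k$ with $\bA_k = -(\vg^k - \nabla f(\vx^k))$ and $\bB_k = -\nabla f(\vx^k)$, so $p_1 = p_2 = 1$. The martingale property $\Exp[\bA_k \mid \calF_k] = 0$ is the unbiasedness in \ref{A3}. To verify $\Exp[\|\bA_k\|^q] \leq {\sf A}$ for some $q \geq 2$, I would take $q = 2$ and use $\Exp[\|\bA_k\|^2] = \Exp[\Exp[\|\vg^k - \nabla f(\vx^k)\|^2 \mid \calF_k]] \leq {\sf C}{\sf F} + {\sf D}$. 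For the $\bB_k$ condition, the ratio $\|\bB_k\|^2 / (1 + \|\nabla f(\vx^k)\|^b)$ is bounded by $1$ whenever $b = 2$. With $a = b = 2$ we then have $qa = 4 \geq b$, $p_1 = 1 > 1/2$, $p_2 = 1$, and $\sum \alpha_k^{2p_1} = \sum \alpha_k^2 < \infty$ by \ref{A4}, so \ref{P3'}--\ref{P4'} hold. Item (ii) of \Cref{thm:convergence theorem} then yields $\|\nabla f(\vx^k)\| \to 0$ almost surely. The only mildly delicate step is aligning the parameters $a, b, q, p_1, p_2$ simultaneously across Phases II and III; this is purely a matter of checking inequalities and is the closest thing to an obstacle in an otherwise mechanical verification.
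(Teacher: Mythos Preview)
Your proposal is correct and follows essentially the same approach as the paper's proof: identify $\vPhi = \nabla f$, $\mu_k = \alpha_k$, obtain \ref{P2} from the recursion \eqref{eq:SGD recursion} via the supermartingale theorem, then verify \ref{P3}--\ref{P4} and \ref{P3'}--\ref{P4'} using the same decomposition $\bA_k = -(\vg^k-\nabla f(\vx^k))$, $\bB_k = -\nabla f(\vx^k)$. The only cosmetic differences are that the paper bounds $\Exp[\|\vg^k\|^2]$ via the exact bias--variance identity rather than your crude $\|\vg^k\|^2 \leq 2\|\vg^k-\nabla f(\vx^k)\|^2 + 2\|\nabla f(\vx^k)\|^2$, and it leaves $q=b$ generic in Phase III whereas you fix $q=b=2$; neither change affects the argument.
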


\subsection{Convergence results of random reshuffling}\label{sec:RR}

We now consider random reshuffling ($\RR$) applied to problems with a finite sum structure
\[ \min_{\vx \in \Rn} f(\vx) := \frac{1}{N}{\sum}_{i=1}^N f(\vx,i), \]
where each component function $f(\cdot,i) : \Rn \to \R$ is supposed to be smooth. At iteration $k$, $\RR$ first generates a random permutation $\sigma^{k+1}$ of the index set $\{1,\dots,N\}$. It then updates $\vx^{k}$ to ${\vx^{k+1}}$ through $N$ consecutive gradient descent-type steps by accessing and using the component gradients $\{\nabla f(\cdot,\sigma_1^{k+1}),\dots,\nabla f(\cdot,\sigma_N^{k+1})\}$ sequentially. Specifically, one update-loop (epoch) of $\RR$ is given by
\e\label{eq:RR}
\tilde \vx_0^k = \vx^k, \quad \tilde \vx_i^k = \tilde \vx_{i-1}^k - \alpha_k \nabla f(\tilde \vx_{i-1}^k,\sigma_i^{k+1}), \quad i = 1,\dots,N, \quad \vx^{k+1} = \tilde \vx_N^k. 
\ee
After one such loop, the step size $\alpha_k$ and the permutation $\sigma^{k+1}$ is updated accordingly; cf. \cite{gurbu2019,mishchenko2020,nguyen2020unified}. We make the following standard assumptions: 

\begin{enumerate}[label=\textup{\textrm{(B.\arabic*)}},topsep=0pt,itemsep=0ex,partopsep=0ex]
	\item \label{B1} For all $i \in \{1,\dots,N\}$, $f(\cdot,i)$ is bounded from below by some $\bar f$ and the gradient $\nabla f(\cdot,i)$ is Lipschitz continuous on $\Rn$ with modulus ${\sf L} > 0$.
	\item \label{B2} The step sizes $\{\alpha_k\}_{k \geq 0}$ satisfy $\sum_{k=0}^\infty \alpha_k = \infty$ and $\sum_{k=0}^\infty \alpha_k^3 < \infty$.
\end{enumerate}

A detailed derivation of the steps shown in \Cref{sec:steps} for $\RR$ is deferred to \Cref{appen:RR}. Based on the discussion in \Cref{appen:RR} and on \cref{thm:convergence theorem}, we obtain the following results for $\RR$.
 
\begin{corollary}\label{thm:RR}
	We consider $\RR$ \eqref{eq:RR} for smooth nonconvex optimization problems under \ref{B1}--\ref{B2}. Then it holds that $\lim_{k \to \infty} \Exp[\|\nabla f(\vx^k)\|] = 0$ and $\lim_{k \to \infty} \|\nabla f(\vx^k)\| = 0$ almost surely. 
\end{corollary}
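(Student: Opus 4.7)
The plan is to apply \Cref{thm:convergence theorem} with $\vPhi \equiv \nabla f$ and $\mu_k \equiv \alpha_k$, following the three-phase template of \Cref{sec:steps} with $a = 2$. For Phase I, P1 is immediate from \ref{B1} by averaging. To verify P2, I would first obtain an inner-loop drift estimate $\|\tilde{\vx}_i^k - \vx^k\| \leq {\sf C} N \alpha_k (\|\nabla f(\vx^k)\| + 1)$ for $i \leq N$ by induction using Lipschitz continuity of each $\nabla f(\cdot,j)$. Combining this with the descent lemma applied at $\vx^{k+1}$ and the key permutation identity $\Exp[\sum_{i=1}^N \nabla f(\vx^k, \sigma_i^{k+1}) \mid \mathcal F_k] = N \nabla f(\vx^k)$ (which is the source of the variance reduction of $\RR$ relative to $\SGD$) yields a one-epoch recursion of the form
\[
\Exp[f(\vx^{k+1}) - \bar f \mid \mathcal F_k] \leq [f(\vx^k) - \bar f] - c_1 \alpha_k \|\nabla f(\vx^k)\|^2 + c_2 \alpha_k^3 (1 + \|\nabla f(\vx^k)\|^2)
\]
for all $k$ large enough that the quadratic-in-$\alpha_k$ cross terms can be absorbed into the linear-in-$\alpha_k$ descent. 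Since \ref{B2} ensures $\sum_k \alpha_k^3 < \infty$, the supermartingale convergence theorem then yields $\sum_k \alpha_k \Exp[\|\nabla f(\vx^k)\|^2] < \infty$, verifying P2, along with boundedness of $\{\Exp[f(\vx^k)]\}_{k \geq 0}$.

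For Phase II, applying Cauchy--Schwarz to $\vx^{k+1} - \vx^k = -\alpha_k \sum_{i=1}^N \nabla f(\tilde{\vx}_{i-1}^k, \sigma_i^{k+1})$ and then using Lipschitz continuity together with the drift bound produces $\Exp[\|\vx^{k+1} - \vx^k\|^2] \leq {\sf A} \alpha_k^2 + {\sf B} \alpha_k^2 \Exp[\|\nabla f(\vx^k)\|^2]$, after absorbing the bounded $\Exp[f(\vx^k) - \bar f]$ into constants. This verifies P3 with $q = 2$, $p_1 = p_2 = 2$, $b = 2$, and P4 is immediate from \ref{B2} and the chosen parameters. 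Item (i) of \Cref{thm:convergence theorem} then yields $\Exp[\|\nabla f(\vx^k)\|] \to 0$.

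For Phase III, I use the generic decomposition $\vx^{k+1} = \vx^k + \alpha_k^{p_1} \mA_k + \alpha_k^{p_2} \mB_k$ from \eqref{eq:decom update}. The drift identity gives $\Exp[\vx^{k+1} - \vx^k \mid \mathcal F_k] = -N \alpha_k \nabla f(\vx^k) + O(\alpha_k^2 (1 + \|\nabla f(\vx^k)\|))$ almost surely, so choosing $p_2 = 1$ and $\mB_k := \alpha_k^{-1} \Exp[\vx^{k+1} - \vx^k \mid \mathcal F_k]$ satisfies the $\limsup$ requirement in P3'. The main obstacle is the martingale scaling: unlike $\SGD$, the sampling-without-replacement structure of $\RR$ causes the centered increment $\vx^{k+1} - \vx^k - \Exp[\vx^{k+1} - \vx^k \mid \mathcal F_k]$ to have conditional second moment of order $O(\alpha_k^3 (1 + \|\nabla f(\vx^k)\|^2))$---strictly smaller than the naive $O(\alpha_k^2)$ bound one would get from treating $\RR$ as generic $\SGD$-type noise. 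Choosing $p_1 = 3/2$ and $\mA_k := \alpha_k^{-3/2}(\vx^{k+1} - \vx^k - \Exp[\vx^{k+1} - \vx^k \mid \mathcal F_k])$ then yields $\Exp[\|\mA_k\|^2] = O(1 + \Exp[\|\nabla f(\vx^k)\|^2])$, which is uniformly bounded thanks to the boundedness of $\{\Exp[f(\vx^k)]\}_{k \geq 0}$ established in Phase I. Condition P4' then reduces to $\sum_k \alpha_k^{2p_1} = \sum_k \alpha_k^3 < \infty$, which is precisely \ref{B2}, and item (ii) of \Cref{thm:convergence theorem} delivers $\|\nabla f(\vx^k)\| \to 0$ almost surely.
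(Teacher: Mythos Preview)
Your inner-loop drift estimate $\|\tilde{\vx}_i^k - \vx^k\| \leq {\sf C}N\alpha_k(\|\nabla f(\vx^k)\| + 1)$ does not follow from \ref{B1}. Lipschitz continuity of the component gradients lets you replace $\nabla f(\tilde{\vx}_{i-1}^k,\sigma_i^{k+1})$ by $\nabla f(\vx^k,\sigma_i^{k+1})$ up to a controllable error, but the individual $\|\nabla f(\vx^k,j)\|$ are not bounded by $\|\nabla f(\vx^k)\|$: take $N=2$, $f(x,1)=x^2$, $f(x,2)=(x-M)^2$ at $x=M/2$, where $\nabla f=0$ yet the component gradients are $\pm M$. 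The correct control, via $\|\nabla f(\vx,j)\|^2 \leq 2{\sf L}(f(\vx,j)-\bar f)$, is in terms of $\sqrt{f(\vx^k)-\bar f}$; this is what the paper does in \cref{lem:RR}, after first invoking the \emph{pathwise} recursion \eqref{eq:RR recursion} (it holds surely, not only in conditional expectation) and the resulting \emph{sure} bound $f(\vx^k)-\bar f \leq {\sf G}(\vx^0)$. Your Phase~III also needs an almost-sure bound on the drift for the $\limsup$ condition on $\bB_k$ in \ref{P3'}, not merely boundedness of $\{\Exp[f(\vx^k)]\}_{k\geq 0}$.

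More significantly, your Phase~III is unnecessarily elaborate. You state the permutation identity only in conditional expectation, but $\sum_{i=1}^N \nabla f(\vx^k,\sigma_i^{k+1}) = N\nabla f(\vx^k)$ holds \emph{surely}, since every permutation visits each index exactly once. Consequently the full update direction $\bB_k := \sum_i \nabla f(\tilde{\vx}_{i-1}^k,\sigma_i^{k+1})$ satisfies $\|\bB_k - N\nabla f(\vx^k)\| = O(\alpha_k)$ surely via \cref{lem:RR}, and the paper simply takes $\bA_k \equiv 0$ in \ref{P3'} --- no martingale term at all. Your $p_1=3/2$ variance computation can be repaired with the corrected drift bound, but it misses the structural point that separates $\RR$ from $\SGD$: the ``noise'' in $\RR$ is pure inner-loop drift, deterministically $O(\alpha_k)$-small, not a genuine stochastic fluctuation.
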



\subsection{Convergence of the proximal stochastic gradient method}\label{sec:prox-SGD}

We consider the composite-type optimization problem
\e \label{eq:comp-prob} {\min}_{\vx \in \Rn}~\psi(\vx) := f(\vx) + \varphi(\vx) \ee
where $f : \Rn \to \R$ is a continuously differentiable function and $\varphi : \Rn \to \Rex$ is $\tau$-weakly convex (see \Cref{appen:wcvx function}), proper, and lower semicontinuous. In this section, we want to apply our unified framework to study the convergence behavior of the well-known proximal stochastic gradient method ($\PSGD$):
%
%
%
%
\begin{equation}  \vx^{k+1} = \prox_{\alpha_k\varphi}(\vx^k - \alpha_k \vg^k), \label{eq:update} \end{equation}
%
%
where $\vg^k \approx \nabla f(\vx^k)$ is a stochastic approximation of $\nabla f(\vx^k)$, $\{\alpha_k\}_{k\geq 0} \subseteq \R_+$ is a suitable step size sequence, and $\prox_{\alpha_k \varphi} : \R^n \to \R^n$, $\prox_{\alpha_k \varphi}(\vx) := \argmin_{\vy \in \Rn} \varphi(\vy) + \frac{1}{2\alpha_k}\|\vx-\vy\|^2$ is the well-known proximity operator of $\varphi$. 

\subsubsection{Assumptions and preparations}
We first recall several useful concepts from nonsmooth and variational analysis.  For a function $h : \Rn \to \Rex$, the Fr\'{e}chet (or regular) subdifferential of $h$ at the point $\vx$ is given by
\[ \partial h(\vx) := \{\vg \in \Rn : h(\vy) \geq h(\vx) + \iprod{\vg}{\vy-\vx}+o(\|\vy-\vx\|) \; \text{as} \; \vy \to \vx\}, \] 
see, e.g., \cite[Chapter 8]{RocWet98}. If $h$ is convex, then the Fr\'{e}chet subdifferential coincides with the standard (convex) subdifferential. 
%
%
It is well-known that the associated first-order optimality condition for the composite problem \eqref{eq:comp-prob} --- $0 \in \partial \psi(\vx) = \nabla f(\vx) + \partial\varphi(\vx)$ --- can be represented as a nonsmooth equation, \cite{RocWet98,HohLabObe20},
\[ F_{\mathrm{nat}}^\alpha(\vx) := \vx - \prox_{\alpha\varphi}(\vx - \alpha \nabla f(\vx)) = 0, \quad \alpha \in(0,\tau^{-1}), \]
where $F_{\mathrm{nat}}^\alpha$ denotes the so-called \textit{natural residual}. The natural residual $F_{\mathrm{nat}}^\alpha$ is a common stationarity measure for the nonsmooth problem \eqref{eq:comp-prob} and widely used in the analysis of proximal methods. 

%
%
%
%
%


We will make the following assumptions on $f$, $\varphi$, and the stochastic oracles $\{\vg^k\}_{k \geq 0}$:

	\begin{enumerate}[label=\textup{\textrm{(C.\arabic*)}},topsep=0pt,itemsep=0ex,partopsep=0ex]
		\item \label{C1} The function $f$ is bounded from below on $\Rn$, i.e., there is $\bar f$ such that $f(\vx) \geq \bar f$ for all $\vx \in \Rn$, and the gradient mapping $\nabla f$ is Lipschitz continuous (on $\Rn$) with modulus ${\sf L} > 0$. 
		\item \label{C2} The function $\varphi$ is $\tau$-weakly convex, proper, lower semicontinuous, and bounded from below on $\dom\varphi$, i.e., we have $\varphi(\vx) \geq \bar\varphi$ for all $\vx \in \dom\varphi$. 
		\item\label{C3}There exists ${\sf L}_\varphi > 0$ such that $\varphi(\vx)-\varphi(\vy) \leq {\sf L}_\varphi\|\vx-\vy\|$ for all $\vx,\vy \in \dom\varphi$.
%
		\item \label{C4} Each $\vg^k$ defines an unbiased estimator of $\nabla f(\vx^k)$, i.e., we have $\Exp[\vg^k \mid \mathcal F_{k}] = \nabla f(\vx^k)$ almost surely, and there exist ${\sf C}, {\sf D} \geq 0$ such that 
		\[ \Exp[\|\vg^k-\nabla f(\vx^k)\|^2 \mid \mathcal F_{k}] \leq {\sf C}[f(\vx^k)-\bar f] + {\sf D} \quad \text{almost surely} \quad  \forall~k\in \N. \] 
		\item \label{C5} The step sizes $\{\alpha_k\}_{k \geq 0}$ satisfy $\sum_{k=0}^\infty \alpha_k = \infty$ and $\sum_{k=0}^\infty \alpha_k^2 < \infty$.
%
		
	\end{enumerate}

%
%

%
%
%
%

Here, we again assume that the generated stochastic processes $\{\vx^k\}_{k \geq 0}$ is adapted to the filtration $\{\mathcal F_k\}_{k \geq 0}$. The assumptions \ref{C1}, \ref{C2}, \ref{C4}, and \ref{C5} are fairly standard and broadly applicable. In particular, \ref{C1}, \ref{C4}, and \ref{C5} coincide with the conditions \ref{A1}--\ref{A4} used in the analysis of $\SGD$. We continue with several remarks concerning condition \ref{C3}. 

\begin{remark} \label{rem:examples} Assumption \ref{C3} requires the mapping $\varphi$ to be Lipschitz continuous on its effective domain $\dom\varphi$. This condition holds in many important applications, e.g., when $\varphi$ is chosen as a norm or indicator function. Nonconvex examples satisfying \ref{C2} and \ref{C3} include, e.g., the minimax concave penalty (MCP) function \cite{Zha10}, the smoothly clipped absolute deviation (SCAD) \cite{Fan97}, or the student-t loss function. We refer to \cite{BoeWri21} and \cref{sec:some-examples} for further discussion. 
%
\end{remark}

\subsubsection{Convergence results of prox-SGD}

We now analyze the convergence of the random process $\{\vx^k\}_{k\geq 0}$ generated by the stochastic algorithmic scheme \eqref{eq:update}. As pioneered in \cite{DavDru19}, we will use the Moreau envelope $\env_{\theta\psi}$, 
\e\label{eq:moreau env} \env_{\theta\psi}: \Rn \to \R, \quad \env_{\theta\psi}(\vx) := {\min}_{\vy\in\Rn}~\psi(\vy)+\frac{1}{2\theta}\|\vx-\vy\|^2, \ee
as a smooth Lyapunov function to study the descent properties and convergence of $\PSGD$.

We first note that the conditions \ref{C1} and \ref{C2} imply $\theta^{-1}$-weak convexity of $\psi$ for every $\theta \in (0,({\sf L}+\tau)^{-1}]$. In this case, the Moreau envelope $\env_{\theta\psi}$ is a well-defined and continuously differentiable function with gradient $\nabla\env_{\theta\psi}(\vx) = \frac{1}{\theta}(\vx-\prox_{\theta\psi}(\vx))$; see, e.g., \cite[Theorem 31.5]{Roc97}. 

As shown in \cite{DruLew18,DavDru19}, the norm of the Moreau envelope --- $\|\nabla \env_{\theta\psi}(x)\|$ --- defines an alternative stationarity measure for problem \eqref{eq:comp-prob} that is equivalent to the natural residual if $\theta$ is chosen sufficiently small. A more explicit derivation of this connection is provided in \cref{lem:env-to-nat}.

Next, we establish convergence of $\PSGD$ by setting $\vPhi \equiv \nabla \env_{\theta\psi}$ and $\mu_k \equiv \alpha_k$. Our analysis is based on the following two estimates which are verified in \cref{app:env} and \cref{sec:prox-sgd-expectation}.

\begin{lemma} \label{lem:psgd-1} Let $\{\vx^k\}_{k \geq 0}$ be generated by $\PSGD$ and let the assumptions \ref{C1}--\ref{C4} be satisfied. Then, for $\theta \in (0,[3{\sf L}+\tau]^{-1})$ and all $k$ with $\alpha_k \leq \min\{\frac{1}{2\tau},\frac{1}{2(\theta^{-1}-[{\sf L}+\tau])}\}$, it holds that 
\begin{align} \nonumber \Exp[\env_{\theta\psi}(\vx^{k+1}) - \bar\psi \mid \mathcal F_k] & \leq (1 + 4{\sf C}\theta^{-1}\alpha_k^2) \cdot [\env_{\theta\psi}(\vx^{k})-\bar\psi] \\ & \hspace{4ex} - {\sf L}\theta \alpha_k \|\nabla\env_{\theta\psi}(\vx^k)\|^2 + 2\alpha_k^2 ( {\sf C}{\sf L}_\varphi^2 + {\sf D}{\theta^{-1}}),  \label{eq:env-esti}\end{align}
almost surely, where $\bar \psi := \bar f + \bar \varphi$.
\end{lemma}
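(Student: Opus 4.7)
My plan is to follow the Davis--Drusvyatskiy style of analysis using the Moreau envelope $\env_{\theta\psi}$ as a smooth Lyapunov function. The crucial preparatory object is the proximal point $\hat\vx^k := \prox_{\theta\psi}(\vx^k)$, which under $\theta < 1/({\sf L}+\tau)$ is well defined and satisfies the identities
\[
\env_{\theta\psi}(\vx^k) = \psi(\hat\vx^k) + \tfrac{1}{2\theta}\|\vx^k - \hat\vx^k\|^2, \qquad \nabla\env_{\theta\psi}(\vx^k) = \theta^{-1}(\vx^k - \hat\vx^k).
\]
Moreover, first-order optimality for the prox subproblem gives $\nabla\env_{\theta\psi}(\vx^k) = \nabla f(\hat\vx^k) + \xi^k$ for some $\xi^k \in \partial\varphi(\hat\vx^k)$, and the Lipschitz assumption \ref{C3} forces $\|\xi^k\| \leq {\sf L}_\varphi$. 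Similarly, the $\PSGD$ update yields $\zeta^{k+1} := \alpha_k^{-1}(\vx^k - \alpha_k \vg^k - \vx^{k+1}) \in \partial\varphi(\vx^{k+1})$ with $\|\zeta^{k+1}\| \leq {\sf L}_\varphi$, so $\vx^{k+1} - \vx^k = -\alpha_k(\vg^k + \zeta^{k+1})$.

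First, I would test the definition of the Moreau envelope at $\vx^{k+1}$ using $\hat\vx^k$ as the test point,
\[
\env_{\theta\psi}(\vx^{k+1}) \leq \psi(\hat\vx^k) + \tfrac{1}{2\theta}\|\vx^{k+1} - \hat\vx^k\|^2 = \env_{\theta\psi}(\vx^k) + \tfrac{1}{\theta}\langle \vx^{k+1} - \vx^k, \vx^k - \hat\vx^k\rangle + \tfrac{1}{2\theta}\|\vx^{k+1} - \vx^k\|^2,
\]
which reduces the lemma to controlling the cross term and the squared step length in conditional expectation. For the cross term I substitute $\vx^{k+1} - \vx^k = -\alpha_k(\vg^k + \zeta^{k+1})$, use $\Exp[\vg^k \mid \mathcal F_k] = \nabla f(\vx^k)$ from \ref{C4}, and decompose
$\nabla f(\vx^k) + \Exp[\zeta^{k+1} \mid \mathcal F_k] = \nabla\env_{\theta\psi}(\vx^k) + [\nabla f(\vx^k) - \nabla f(\hat\vx^k)] + \Exp[\zeta^{k+1} - \xi^k \mid \mathcal F_k]$, producing a leading $-\alpha_k \|\nabla\env_{\theta\psi}(\vx^k)\|^2$ descent. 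The smoothness of $f$ bounds the middle perturbation by ${\sf L}\theta\|\nabla\env_{\theta\psi}(\vx^k)\|^2$, while the subgradient deviation $\zeta^{k+1} - \xi^k$ is absorbed using the uniform bound $2{\sf L}_\varphi$ from \ref{C3} together with a carefully chosen Young's inequality.

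Second, for the squared step length I use $\|\vx^{k+1} - \vx^k\|^2 \leq 2\alpha_k^2(\|\vg^k\|^2 + {\sf L}_\varphi^2)$, together with $\Exp[\|\vg^k\|^2 \mid \mathcal F_k] \leq \|\nabla f(\vx^k)\|^2 + {\sf C}[f(\vx^k) - \bar f] + {\sf D}$ (from \ref{C4}) and the standard consequence $\|\nabla f(\vx^k)\|^2 \leq 2{\sf L}[f(\vx^k) - \bar f]$ of \ref{C1}. To cast everything in terms of $\env_{\theta\psi}(\vx^k) - \bar\psi$, I will establish the auxiliary comparison
\[
f(\vx^k) - \bar f \;\leq\; [\env_{\theta\psi}(\vx^k) - \bar\psi] + c_1 \|\nabla\env_{\theta\psi}(\vx^k)\|^2 + c_2 {\sf L}_\varphi^2,
\]
for explicit constants $c_1, c_2$, by applying ${\sf L}$-smoothness of $f$ at $\hat\vx^k$, the representation $\theta\nabla f(\hat\vx^k) = (\vx^k - \hat\vx^k) - \theta\xi^k$, and the bound $\|\xi^k\| \leq {\sf L}_\varphi$.

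The main obstacle is the final bookkeeping: extracting \emph{exactly} the coefficient $-{\sf L}\theta\alpha_k$ on $\|\nabla\env_{\theta\psi}(\vx^k)\|^2$ and the tight multiplier $1 + 4{\sf C}\theta^{-1}\alpha_k^2$ on $\env_{\theta\psi}(\vx^k) - \bar\psi$, while routing all higher-order residuals into the constant $2\alpha_k^2({\sf C}{\sf L}_\varphi^2 + {\sf D}\theta^{-1})$. This is where the two step-size restrictions pay off: $\alpha_k \leq 1/(2\tau)$ tames the weak-convexity slack in the prox three-point inequality (yielding $1/(1-\alpha_k\tau) \leq 2$), whereas $\alpha_k \leq 1/[2(\theta^{-1} - {\sf L} - \tau)]$ is equivalent to $\alpha_k \lesssim \theta$ and is precisely what allows the $\theta^{-1}\|\vx^{k+1} - \vx^k\|^2$-type residuals, as well as the ${\sf L}\theta$-sized smoothness gap from the cross term, to be absorbed into the leading descent rather than inflating the $\env_{\theta\psi}(\vx^k) - \bar\psi$ multiplier.
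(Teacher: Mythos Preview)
Your proposal has a genuine gap. You assert that assumption \ref{C3} --- Lipschitz continuity of $\varphi$ on $\dom\varphi$ --- ``forces $\|\xi^k\| \leq {\sf L}_\varphi$'' for the subgradient $\xi^k \in \partial\varphi(\hat\vx^k)$, and likewise $\|\zeta^{k+1}\| \leq {\sf L}_\varphi$. This inference is valid only at interior points of $\dom\varphi$. The paper explicitly allows $\dom\varphi$ to be a proper subset of $\Rn$ (indicator functions are listed as admissible in \cref{rem:examples}), and at a boundary point the Fr\'echet subdifferential can be unbounded: for $\varphi = \iota_{[0,1]}$ one has ${\sf L}_\varphi = 0$ yet $\partial\varphi(0) = (-\infty,0]$. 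Since $\hat\vx^k$ and $\vx^{k+1}$ may sit on $\partial(\dom\varphi)$, both your cross-term decomposition and your step-length estimate $\|\vx^{k+1}-\vx^k\|^2 \leq 2\alpha_k^2(\|\vg^k\|^2 + {\sf L}_\varphi^2)$ collapse in that regime. Your auxiliary comparison for $f(\vx^k)-\bar f$ suffers the same defect, since it again appeals to $\|\xi^k\|\leq{\sf L}_\varphi$.

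The paper avoids subgradient norm bounds altogether. It rewrites $\bar\vx^k$ via the fixed-point identity $\bar\vx^k = \prox_{\alpha_k\varphi}(\bar\vx^k - \alpha_k\nabla f(\bar\vx^k) - \alpha_k\theta^{-1}[\bar\vx^k-\vx^k])$ and then applies the $(1-\alpha_k\tau)^{-1}$-Lipschitz continuity of $\prox_{\alpha_k\varphi}$ to obtain directly
\[
\Exp[\|\vx^{k+1} - \bar\vx^k\|^2 \mid \mathcal F_k] \leq (1-\alpha_k\theta_k)^2 \|\vx^k - \bar\vx^k\|^2 + \frac{\alpha_k^2\sigma_k^2}{(1-\alpha_k\tau)^2}, \quad \theta_k := \frac{\theta^{-1}-[{\sf L}+\tau]}{1-\alpha_k\tau}.
\]
Substituting this into $\env_{\theta\psi}(\vx^{k+1}) \leq \env_{\theta\psi}(\vx^k) + \frac{1}{2\theta}[\|\vx^{k+1}-\bar\vx^k\|^2 - \|\vx^k-\bar\vx^k\|^2]$ isolates the descent coefficient $(2-\alpha_k\theta_k)\theta_k\alpha_k/(2\theta)$; the two step-size bounds are precisely what give $\theta_k \geq 2{\sf L}$ and $2-\alpha_k\theta_k \geq 1$, hence the exact factor ${\sf L}\theta\alpha_k$. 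Finally $\sigma_k^2$ is linked to $\env_{\theta\psi}(\vx^k)-\bar\psi$ through $f(\vx^k)-\bar f \leq 2[\env_{\theta\psi}(\vx^k)-\bar\psi] + {\sf L}_\varphi^2\theta$, whose proof uses only the \emph{function-value} Lipschitz inequality $\varphi(\vx^k)-\varphi(\bar\vx^k) \leq {\sf L}_\varphi\|\vx^k-\bar\vx^k\|$, never a bound on $\partial\varphi$.
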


\begin{lemma} \label{lem:psgd-2} Let $\{\vx^k\}_{k \geq 0}$ be generated by $\PSGD$ and suppose that the assumptions \ref{C1}--\ref{C4} hold. Then, for $\theta \in (0,[\frac43{\sf L}+\tau]^{-1})$ and all $k$ with $\alpha_k \leq \frac{1}{2\tau}$, we have almost surely
\e \Exp[\|\vx^{k+1}-\vx^k\|^2 \mid \mathcal F_k]  \leq 8(2{\sf L}+{\sf C})\alpha_k^2 \cdot [\env_{\theta\psi}(\vx^k)-\bar\psi] +  4(((2{\sf L}+{\sf C})\theta+1){\sf L}_\varphi^2+{\sf D})\alpha_k^2. \label{eq:hello-hello} \ee
\end{lemma}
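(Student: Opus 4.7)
The plan is to bound $\|\vx^{k+1}-\vx^k\|^2$ pathwise from the proximal optimality, take conditional expectation and use \ref{C4} to control the oracle variance, and then convert the resulting $f(\vx^k)-\bar f$ factor into the Moreau envelope gap $\env_{\theta\psi}(\vx^k)-\bar\psi$.

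\textbf{Step length.} Testing the defining minimization inequality for $\vx^{k+1}=\prox_{\alpha_k\varphi}(\vx^k-\alpha_k\vg^k)$ against the admissible point $\vy=\vx^k\in\dom\varphi$ (the inclusion holds inductively because every iterate lies in $\dom\varphi$) yields $\tfrac{1}{2\alpha_k}\|\vx^{k+1}-\vx^k\|^2 \leq \varphi(\vx^k)-\varphi(\vx^{k+1}) - \langle\vg^k,\vx^{k+1}-\vx^k\rangle$. Applying \ref{C3} to the $\varphi$-difference and Cauchy--Schwarz to the inner product gives $\|\vx^{k+1}-\vx^k\|\leq 2\alpha_k({\sf L}_\varphi+\|\vg^k\|)$; squaring and using $(a+b)^2\leq (1+\epsilon)a^2+(1+\epsilon^{-1})b^2$ with a free parameter $\epsilon>0$ then produces
\[
\|\vx^{k+1}-\vx^k\|^2 \leq 4\alpha_k^2\big[(1+\epsilon){\sf L}_\varphi^2 + (1+\epsilon^{-1})\|\vg^k\|^2\big].
\]

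\textbf{Oracle variance.} Taking conditional expectation, the bias--variance decomposition combined with \ref{C4} and the standard inequality $\|\nabla f(\vx^k)\|^2 \leq 2{\sf L}[f(\vx^k)-\bar f]$ (an elementary consequence of \ref{C1} together with $f\geq\bar f$) yields $\Exp[\|\vg^k\|^2\mid\calF_k] \leq (2{\sf L}+{\sf C})[f(\vx^k)-\bar f]+{\sf D}$.

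\textbf{Moreau envelope comparison.} Let $\vz^k := \prox_{\theta\psi}(\vx^k)$. Because $\varphi(\vz^k)\geq\bar\varphi$, the envelope identity $\env_{\theta\psi}(\vx^k)-\bar\psi = [f(\vz^k)-\bar f] + [\varphi(\vz^k)-\bar\varphi] + \tfrac{1}{2\theta}\|\vx^k-\vz^k\|^2$ gives $[f(\vz^k)-\bar f]+\tfrac{1}{2\theta}\|\vx^k-\vz^k\|^2 \leq \env_{\theta\psi}(\vx^k)-\bar\psi$. The descent lemma at $\vz^k$, Young's inequality on $\langle\nabla f(\vz^k),\vx^k-\vz^k\rangle$ with the balanced parameter $\lambda=1/\theta$, and $\|\nabla f(\vz^k)\|^2\leq 2{\sf L}[f(\vz^k)-\bar f]$ combine into
\[
f(\vx^k)-\bar f \leq (1+{\sf L}\theta)\Big([f(\vz^k)-\bar f]+\tfrac{1}{2\theta}\|\vx^k-\vz^k\|^2\Big) \leq (1+{\sf L}\theta)\big[\env_{\theta\psi}(\vx^k)-\bar\psi\big],
\]
and since the assumption $\theta\leq(\tfrac{4}{3}{\sf L}+\tau)^{-1}$ forces ${\sf L}\theta\leq 3/4$, we obtain $f(\vx^k)-\bar f\leq 2[\env_{\theta\psi}(\vx^k)-\bar\psi]$.

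\textbf{Assembly and main obstacle.} Chaining the three estimates leads to
\[
\Exp[\|\vx^{k+1}-\vx^k\|^2\mid\calF_k] \leq 8(1+\epsilon^{-1})(2{\sf L}+{\sf C})\alpha_k^2[\env_{\theta\psi}(\vx^k)-\bar\psi] + 4(1+\epsilon){\sf L}_\varphi^2\alpha_k^2 + 4(1+\epsilon^{-1}){\sf D}\alpha_k^2,
\]
so any fixed $\epsilon>0$ already produces a bound of the structural form claimed in the lemma. The main obstacle is to land on precisely the constants $8(2{\sf L}+{\sf C})$ and $4((2{\sf L}+{\sf C})\theta+1){\sf L}_\varphi^2$ as stated: this requires a coupled choice of the Young parameter $\epsilon$ (which controls the split between ${\sf L}_\varphi^2$ and $\|\vg^k\|^2$ contributions) and a careful routing of the $(1+{\sf L}\theta)$ factor from the envelope comparison so that the two small losses collapse together rather than multiply. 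None of the individual manipulations is difficult, but the simultaneous book-keeping of three tunable parameters is the most delicate piece.
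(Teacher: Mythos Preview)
Your three-step skeleton (pathwise step bound, variance via \ref{C4}, envelope comparison) is exactly the paper's route, and your oracle-variance step is identical to theirs. Two points of divergence are worth noting.

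\textbf{Step-length bound.} Your inequality $\|\vx^{k+1}-\vx^k\|\leq 2\alpha_k({\sf L}_\varphi+\|\vg^k\|)$ forces you through $(a+b)^2\leq(1+\epsilon)a^2+(1+\epsilon^{-1})b^2$, and no value of $\epsilon$ recovers the stated constants: matching the leading $8(2{\sf L}+{\sf C})$ coefficient needs $\epsilon^{-1}=0$, which blows up the ${\sf L}_\varphi^2$ term. The paper avoids the parameter altogether. Instead of the bare minimization inequality, it uses the $\tau$-weak-convexity subgradient inequality at $\vx^{k+1}$ (equivalently, the $(1/\alpha_k-\tau)$-strong convexity of the prox subproblem), which upgrades your $\tfrac{1}{2\alpha_k}$ coefficient to $\tfrac{1}{\alpha_k}-\tfrac{\tau}{2}$. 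Then Young is applied \emph{separately} to ${\sf L}_\varphi\|\vx^{k+1}-\vx^k\|$ and to $\langle\vg^k,\vx^{k+1}-\vx^k\rangle$, each with parameter $\alpha_k$; with $\alpha_k\leq\tfrac{1}{2\tau}$ (this is where that hypothesis is actually used) one lands directly on the additive bound
\[
\|\vx^{k+1}-\vx^k\|^2 \leq 4{\sf L}_\varphi^2\alpha_k^2 + 4\alpha_k^2\|\vg^k\|^2,
\]
with no $\epsilon$ at all. So the ``coupled tuning'' you describe is not the fix; the fix is a sharper starting inequality.

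\textbf{Envelope comparison.} Your bound $f(\vx^k)-\bar f\leq (1+{\sf L}\theta)[\env_{\theta\psi}(\vx^k)-\bar\psi]\leq 2[\env_{\theta\psi}(\vx^k)-\bar\psi]$ is actually \emph{sharper} than the paper's \cref{lem:psi-env}, which carries an extra additive $+{\sf L}_\varphi^2\theta$. That additive term is precisely the source of the $(2{\sf L}+{\sf C})\theta{\sf L}_\varphi^2$ piece in the stated constant. So even after adopting the paper's step bound, your envelope estimate would give a slightly \emph{smaller} second constant than the one printed in the lemma --- still a valid upper bound, just not a literal match.
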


%
%
%

\textbf{Phase I: Verifying  \ref{P1}--\ref{P2}.} In \cite[Corollary 3.4]{HohLabObe20}, it is shown that the gradient of the Moreau envelope is Lipschitz continuous with modulus ${\sf L}_e := \max\{{\theta^{-1}},(1-[{\sf L}+\tau]\theta)^{-1}{[{\sf L}+\tau]}\}$ for all $\theta \in (0,[{\sf L}+\tau]^{-1})$.
Thus, condition \ref{P1} is satisfied. 
%
%
%

Furthermore, due to $\alpha_k \to 0$ and choosing $\theta \in (0,[3{\sf L}+\tau]^{-1})$, the estimate \eqref{eq:env-esti} in \cref{lem:psgd-1} holds for all $k$ sufficiently large. Consequently, due to $\env_{\theta\psi}(\vx) \geq \psi(\prox_{\theta\psi}(\vx)) \geq \bar \psi$ and \ref{C5},  \cref{Theorem:martingale_convergece} is applicable and upon taking total expectation, $\{\Exp[\env_{\theta\psi}(\vx^k)]\}_{k \geq 0}$ converges to some ${\sf E} \in \R$. In addition, the sequence $\{\env_{\theta\psi}(\vx^k)\}_{k \geq 0}$ converges almost surely to some random variable $e^\star$ and we have $\sum_{k=0}^\infty \alpha_k \Exp[\|\nabla \env_{\theta\psi}(\vx^k)\|^2] < \infty$. This verifies condition \ref{P2} with $a = 2$.

\textbf{Phase II: Verifying  \ref{P3}--\ref{P4} for showing convergence in expectation.} Assumptions \ref{C1}--\ref{C5} and \cref{lem:psgd-2} allow us to establish the required bound stated in \ref{P3}. Specifically, taking total expectation in \eqref{eq:hello-hello}, we have
\[ \Exp[\|\vx^{k+1}-\vx^k\|^2]  \leq 8(2{\sf L}+{\sf C})\alpha_k^2 \cdot \Exp[\env_{\theta\psi}(\vx^k)-\bar\psi] +  4(((2{\sf L}+{\sf C})\theta+1){\sf L}_\varphi^2+{\sf D})\alpha_k^2 \]
for all $k$ sufficiently large. 
Due to $\Exp[\env_{\theta\psi}(\vx^k)] \to {\sf E}$, there exists ${\sf F}$ such that $\Exp[\env_{\theta\psi}(\vx^k) - \bar\psi] \leq {\sf F}$ for all $k$. Hence, \ref{P3} holds with $q=2$, ${\sf A} = 8(2{\sf L}+{\sf C}){\sf F} + 4(((2{\sf L}+{\sf C})\theta + 1){\sf L}_\varphi^2 + {\sf D})$, $p_1 = 2$, and ${\sf B} = 0$. The property \ref{P4} easily follows from \ref{C5} and the parameter choices. Consequently, using \cref{thm:convergence theorem}, we can infer $\Exp[\|\nabla \env_{\theta\psi}(\vx^k)\|] \to 0$.

\textbf{Phase III: Verifying  \ref{P3'}--\ref{P4'} for showing almost sure convergence.} We follow the construction in \eqref{eq:decom update} and set $\bA_k = \alpha_k^{-1} (\vx^{k+1}-\vx^k-\Exp[\vx^{k+1}-\vx^k \mid \mathcal F_k])$, $\bB_k = \alpha_k^{-1} \Exp[\vx^{k+1}-\vx^k \mid \mathcal F_k]$, and $p_1, p_2 = 1$.
%
%
%
Clearly, we have $\Exp[\bA_k \mid \mathcal F_k] = 0$ and 
%
%
based on the previous results in \textbf{Phase II}, we can show $\Exp[\|\vx^{k+1}-\vx^k\|^2] = \mathcal O(\alpha_k^2)$ which establishes boundedness of $\{\Exp[\|\bA_k\|^2\}_{k \geq 0}$. Similarly, for $\bB_k$ and by \cref{lem:psgd-2} and Jensen's inequality, we obtain
\[ \|\bB_k\|^2 \leq \alpha_k^{-2} \Exp[\|\vx^{k+1}-\vx^k\|^2 \mid \mathcal F_k] \leq 8(2{\sf L}+{\sf C}) \cdot [\env_{\theta\psi}(\vx^k)-\bar\psi] + \mathcal O(1). \]
Due to $\env_{\theta\psi}(\vx^k) \to e^\star$ almost surely, this shows $\limsup_{k \to \infty} \|\bB_k\|^2 < \infty$ almost surely. Hence, all requirements in \ref{P3'} are satisfied with $q=2$ and $b = 0$. Moreover, it is easy to see that property \ref{P4'} also holds in this case. Overall, \cref{thm:convergence theorem} implies $\|\nabla \env_{\theta\psi}(\vx^k)\| \to 0$ almost surely.

As mentioned, it is possible to express the obtained convergence results in terms of the natural residual $F_{\mathrm{nat}} = F_{\mathrm{nat}}^1$, see, e.g., \cref{lem:env-to-nat}. We summarize our observations in the following corollary.
 
\begin{corollary}\label{thm:prox-sgd}
	Let us consider $\PSGD$ \eqref{eq:update} for the composite problem \eqref{eq:comp-prob} under \ref{C1}--\ref{C5}. Then, we have $\lim_{k \to \infty} \Exp[\|F_{\mathrm{nat}}(\vx^k)\|] = 0$ and $\lim_{k \to \infty} \|F_{\mathrm{nat}}(\vx^k)\| = 0$ almost surely. 
\end{corollary}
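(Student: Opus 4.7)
The plan is to leverage the three-phase verification that has already been carried out in the text immediately preceding the corollary statement, which establishes $\lim_{k \to \infty} \Exp[\|\nabla \env_{\theta\psi}(\vx^k)\|] = 0$ and $\|\nabla\env_{\theta\psi}(\vx^k)\| \to 0$ almost surely for any fixed $\theta \in (0,[3{\sf L}+\tau]^{-1})$. Since the step sizes satisfy $\alpha_k \to 0$ by \ref{C5}, the conditions $\alpha_k \leq \min\{(2\tau)^{-1}, [2(\theta^{-1}-[{\sf L}+\tau])]^{-1}\}$ required by \cref{lem:psgd-1} and \cref{lem:psgd-2} are eventually satisfied, so the three-phase argument applies verbatim. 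Thus the only step remaining is to convert the Moreau-envelope stationarity measure into the natural residual $F_{\mathrm{nat}} = F_{\mathrm{nat}}^1$.

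The natural tool for this conversion is \cref{lem:env-to-nat}, which (once $\theta$ is small enough that the proximal subproblems defining $\env_{\theta\psi}$ are strongly convex) should yield a two-sided equivalence of the form
\[ c_1 \|\nabla\env_{\theta\psi}(\vx)\| \leq \|F_{\mathrm{nat}}(\vx)\| \leq c_2 \|\nabla\env_{\theta\psi}(\vx)\| \]
for constants $c_1,c_2 > 0$ depending only on $\theta$, ${\sf L}$, and $\tau$. Informally, one direction follows by writing $\nabla\env_{\theta\psi}(\vx) = \theta^{-1}(\vx - \vz)$ with $\vz = \prox_{\theta\psi}(\vx)$, using the optimality characterization $\vz = \prox_{\theta\varphi}(\vx - \theta\nabla f(\vz))$, and bounding $\|\vx - \prox_{\varphi}(\vx-\nabla f(\vx))\|$ via Lipschitz continuity of $\nabla f$, nonexpansiveness of the relevant proximal maps in the weakly-convex regime, and a standard rescaling between step sizes $\theta$ and $1$.

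With the equivalence in hand the corollary follows at once: taking expectations in $\|F_{\mathrm{nat}}(\vx^k)\| \leq c_2\,\|\nabla\env_{\theta\psi}(\vx^k)\|$ together with $\Exp[\|\nabla\env_{\theta\psi}(\vx^k)\|] \to 0$ gives $\Exp[\|F_{\mathrm{nat}}(\vx^k)\|] \to 0$, and the pointwise version of the same inequality combined with $\|\nabla\env_{\theta\psi}(\vx^k)\| \to 0$ almost surely yields $\|F_{\mathrm{nat}}(\vx^k)\| \to 0$ almost surely.

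The main obstacle I anticipate is purely one of bookkeeping rather than substance: one must pick $\theta$ in the intersection of the admissible intervals required by \cref{lem:psgd-1} (namely $\theta < (3{\sf L}+\tau)^{-1}$) and by the envelope/natural-residual equivalence of \cref{lem:env-to-nat} (which typically needs $\theta < ({\sf L}+\tau)^{-1}$), and then verify that the corresponding tail condition on $\{\alpha_k\}_{k\geq 0}$ is automatic from \ref{C5}. Once this compatibility is checked, no additional probabilistic or analytic work is needed beyond what the three-phase verification and \cref{lem:env-to-nat} already supply.
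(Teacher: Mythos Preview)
Your proposal is correct and matches the paper's approach: the text preceding the corollary already completes the three-phase verification of \cref{thm:convergence theorem} for $\vPhi = \nabla\env_{\theta\psi}$, and the paper then invokes exactly \cref{lem:env-to-nat} to transfer the conclusion to $F_{\mathrm{nat}}$. One minor point: in the paper, \cref{lem:env-to-nat} already requires $\theta \in (0,[3{\sf L}+\tau]^{-1})$, the same range used in \cref{lem:psgd-1}, so there is no additional intersection to check.
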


\begin{remark}
	As a byproduct, \Cref{lem:psgd-1} also leads to an expected iteration complexity result of $\PSGD$ by using the ABC condition \ref{C4} rather than the standard bounded variance assumption. This is a nontrivial extension of \cite[Corollary 3.6]{DavDru19}. We provide a full derivation in \Cref{appen:complexity PSGD}. 
\end{remark}

\subsection{Convergence of stochastic model-based methods}\label{sec:SMM}

In this section, we consider the convergence of stochastic model-based methods for nonsmooth weakly convex optimization problems
\e \label{eq:wcvx prob} 
{\min}_{\vx \in \Rn}~\psi(\vx) := f(\vx) + \varphi(\vx) = \Exp_{\xi\sim D}[f(\vx,\xi)] + \varphi(\vx), 
\ee
where both $f$ and $\varphi$ are assumed to be (nonsmooth) weakly convex functions and $\psi$ is lower bounded, i.e., $\psi(\vx)\geq \bar \psi$ for all $\vx\in \dom \varphi$. Classical stochastic optimization methods --- including proximal stochastic subgradient, stochastic proximal point, and stochastic prox-linear methods --- for solving \eqref{eq:wcvx prob} are unified by the stochastic model-based methods ($\SMM$) \cite{DucRua18,DavDru19}:
\e\label{eq:model based}
\vx^{k+1} = {\argmin}_{\vx \in \Rn} \ f_{\vx^k}(\vx, \xi^{k})  +   \varphi(\vx) + \frac{1}{2\alpha_k}  \|\vx - \vx^k\|^2,
\ee
where $ f_{\vx^k}(\vx, \xi^{k})$ is a stochastic approximation of  $f$ around $\vx^k$ using the sample $\xi^{k}$; see \Cref{appen:three SMM} for descriptions of three major types of $\SMM$. Setting $\calF_k := \sigma(\xi^0,\ldots, \xi^{k-1})$, it is easy to see that $\{\vx^{k}\}_{k\geq 0}$ is adapted to $\{\calF_k\}_{k\geq 0}$.
%
%
We analyze convergence of $\SMM$ under the following assumptions.
\begin{enumerate}[label=\textup{\textrm{(D.\arabic*)}},topsep=0pt,itemsep=0ex,partopsep=0ex]
		\item \label{D1} The stochastic approximation function $f_{\vx}$ satisfies a one-sided accuracy property, i.e., we have $\Exp_\xi[f_{\vx}(\vx,\xi) ] = f(\vx)$ for all $\vx \in U$ and 
		\[
		    \Exp_{\xi}[f_{\vx}(\vy,\xi) - f(\vy)] \leq \frac{\tau}{2} \|\vx-\vy\|^2 \quad \forall~\vx,\vy  \in U,
		\]
		 where $U$ is an open convex set containing $\dom \varphi$.
		\item \label{D2} The function $\vy \mapsto f_{\vx}(\vy,\xi)+\varphi(\vy)$ is $\eta$-weakly convex for all $\vx\in U$ and almost every $\xi$.
		\item \label{D3} There exists ${\sf L}>0$ such that the stochastic approximation function $f_{\vx}$ satisfies 
		\[
		     f_{\vx}(\vx,\xi) - f_{\vx}(\vy,\xi) \leq {\sf L} \|\vx-\vy\| \quad \forall~\vx,\vy\in U, \quad \text{and almost every $\xi$}.
		\]
		\item \label{D4} The function $\varphi$ is ${\sf L}_\varphi$-Lipschitz continuous. 
		\item \label{D5} The step sizes $\{\alpha_k\}_{k \geq 0}$ satisfy $\sum_{k=0}^\infty \alpha_k = \infty$ and $\sum_{k=0}^\infty \alpha_k^2 < \infty$. 
\end{enumerate}

Assumptions \ref{D1}, \ref{D2},  \ref{D3} are standard for analyzing $\SMM$ and identical to that of \cite{DavDru19}. \ref{D5} is convention for stochastic methods. Assumption \ref{D4} mimics \ref{C3}; see \Cref{rem:examples} for discussions.


We now derive the convergence of $\SMM$ below by setting $\vPhi \equiv \nabla \env_{\theta\psi} $ and $\mu_k \equiv \alpha_k$. Our derivation is based on the following two estimates, in which the proof of \Cref{lemma:step length model-based} is given in \Cref{appen:SMM step length}.
\begin{lemma}[Theorem 4.3 of \cite{DavDru19}]\label{lemma:recursion model-based}
	Let $\theta \in (0,(\tau+\eta)^{-1})$ and $\alpha_k <  \theta$ be given. Then, we have 
	\[
	\Exp[\env_{\theta\psi}(\vx^{k+1}) \mid \calF_k] \leq \env_{\theta\psi}(\vx^{k})  - \frac{(1 - [\tau + \eta]\theta)\alpha_{k}}{2(1-\eta\alpha_{k})}  \|\nabla \env_{\theta\psi} (\vx^k)\|^2 + \frac{2{\sf L}^2\alpha_{k}^2}{(1 - \eta\alpha_{k}) (\theta -\alpha_{k})}. 
	\]
\end{lemma}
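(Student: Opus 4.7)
The plan is to derive the descent-type inequality by comparing the update $\vx^{k+1}$ against the proximal point $\hat{\vx}^k := \prox_{\theta\psi}(\vx^k)$, which is the natural quantity linking the Moreau envelope to stationarity. Recall that $\env_{\theta\psi}(\vx^k) = \psi(\hat{\vx}^k) + \frac{1}{2\theta}\|\vx^k - \hat{\vx}^k\|^2$ and $\|\nabla\env_{\theta\psi}(\vx^k)\|^2 = \theta^{-2}\|\vx^k - \hat{\vx}^k\|^2$; moreover, the condition $\theta \in (0,(\tau+\eta)^{-1})$ ensures that $\psi$ is $\theta^{-1}$-weakly convex so that $\hat{\vx}^k$ is well defined. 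By the very definition of $\env_{\theta\psi}$, we get the sandwich upper bound
\[
\env_{\theta\psi}(\vx^{k+1}) \leq \psi(\hat{\vx}^k) + \frac{1}{2\theta}\|\vx^{k+1} - \hat{\vx}^k\|^2,
\]
so the task reduces to controlling $\|\vx^{k+1} - \hat{\vx}^k\|^2$ in terms of $\|\vx^k - \hat{\vx}^k\|^2$ plus a stochastic error.

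Next, I would exploit the defining optimality of \eqref{eq:model based}. Under \ref{D2}, the objective of the subproblem is $(\alpha_k^{-1} - \eta)$-strongly convex (which is positive since $\alpha_k < \theta \leq (\tau+\eta)^{-1} \leq \eta^{-1}$), which yields a three-point inequality: for every $\vy \in \dom\varphi$,
\[
f_{\vx^k}(\vx^{k+1},\xi^k) + \varphi(\vx^{k+1}) + \frac{1}{2\alpha_k}\|\vx^{k+1}-\vx^k\|^2 + \frac{1 - \eta\alpha_k}{2\alpha_k}\|\vx^{k+1} - \vy\|^2 \leq f_{\vx^k}(\vy,\xi^k) + \varphi(\vy) + \frac{1}{2\alpha_k}\|\vy - \vx^k\|^2.
\]
Plugging in $\vy = \hat{\vx}^k$ gives a bound on $\|\vx^{k+1} - \hat{\vx}^k\|^2$ in terms of differences in $f_{\vx^k}$ and $\varphi$ and the baseline $\|\hat{\vx}^k - \vx^k\|^2$.

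The third step is to take conditional expectation given $\calF_k$ and substitute into the Moreau-envelope bound. On the right-hand side, the one-sided accuracy \ref{D1} handles $\Exp[f_{\vx^k}(\hat{\vx}^k,\xi^k) - f(\hat{\vx}^k) \mid \calF_k] \leq \frac{\tau}{2}\|\vx^k - \hat{\vx}^k\|^2$, while the key trick for the term involving $\vx^{k+1}$ is to apply Lipschitz continuity of $f_{\vx^k}(\cdot,\xi^k)$ from \ref{D3} to write $f_{\vx^k}(\vx^{k+1},\xi^k) - f_{\vx^k}(\hat{\vx}^k,\xi^k) \geq -{\sf L}\|\vx^{k+1}-\hat{\vx}^k\|$, and then absorb ${\sf L}\|\vx^{k+1}-\hat{\vx}^k\|$ using Young's inequality against the $(1-\eta\alpha_k)/(2\alpha_k)\|\vx^{k+1}-\hat{\vx}^k\|^2$ term from the strong convexity estimate. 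This is where the factor $(1-\eta\alpha_k)(\theta-\alpha_k)$ in the denominator of the error term emerges.

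The main obstacle is bookkeeping the constants: one must simultaneously (i) convert $\frac{1}{2\theta}\|\vx^{k+1}-\hat{\vx}^k\|^2$ into the envelope via $\|\vx^k-\hat{\vx}^k\|^2 = \theta^2\|\nabla\env_{\theta\psi}(\vx^k)\|^2$, (ii) account for the mismatch $\frac{1}{2\theta}-\frac{1-\eta\alpha_k}{2\alpha_k}$ so that the coefficient in front of $\|\nabla\env_{\theta\psi}(\vx^k)\|^2$ comes out as $-(1-[\tau+\eta]\theta)\alpha_k/(2(1-\eta\alpha_k))$, and (iii) tune the Young's inequality split so that the residual from ${\sf L}\|\vx^{k+1}-\hat{\vx}^k\|$ gives exactly $2{\sf L}^2\alpha_k^2/((1-\eta\alpha_k)(\theta-\alpha_k))$. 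Since the result is quoted as \cite[Theorem 4.3]{DavDru19}, the cleanest write-up is to follow their algebraic grouping verbatim after establishing the two structural inequalities above.
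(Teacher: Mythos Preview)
Your overall architecture --- Moreau envelope sandwich $\env_{\theta\psi}(\vx^{k+1}) \leq \psi(\hat\vx^k) + \frac{1}{2\theta}\|\vx^{k+1}-\hat\vx^k\|^2$, three-point inequality from the $(\alpha_k^{-1}-\eta)$-strong convexity of the subproblem, then one-sided accuracy \ref{D1} for the $\hat\vx^k$-term, a Lipschitz bound for the $\vx^{k+1}$-term, and Young's inequality --- matches the approach of \cite[Theorem~4.3]{DavDru19} and the paper's own re-derivation in \cref{app:smm-extension}. However, your ``key trick'' step is not quite right and leaves a gap.

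Assumption \ref{D3} is \emph{one-sided at the anchor point}: it only says $f_{\vx^k}(\vx^k,\xi)-f_{\vx^k}(\vy,\xi)\le {\sf L}\|\vx^k-\vy\|$. It does \emph{not} give Lipschitz continuity of $f_{\vx^k}(\cdot,\xi)$ between two arbitrary points, so your claimed bound $f_{\vx^k}(\vx^{k+1},\xi^k)-f_{\vx^k}(\hat\vx^k,\xi^k)\ge -{\sf L}\|\vx^{k+1}-\hat\vx^k\|$ does not follow. Relatedly, your plan never explains what happens to $\varphi(\hat\vx^k)-\varphi(\vx^{k+1})$: the statement does not assume \ref{D4}, so you cannot simply Lipschitz-bound $\varphi$. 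The route taken in \cite{DavDru19} (and in \cref{app:smm-extension}) is to pass through the anchor $\vx^k$: one writes
\[
\Psi_k(\hat\vx^k)-\Psi_k(\vx^{k+1})
= [\psi(\hat\vx^k)-\psi(\vx^{k+1})] + [f_{\vx^k}(\hat\vx^k,\xi^k)-f(\hat\vx^k)] + [f(\vx^{k+1})-f_{\vx^k}(\vx^{k+1},\xi^k)],
\]
and further decomposes the last bracket as $[f(\vx^{k+1})-f(\vx^k)]+[f(\vx^k)-f_{\vx^k}(\vx^k,\xi^k)]+[f_{\vx^k}(\vx^k,\xi^k)-f_{\vx^k}(\vx^{k+1},\xi^k)]$. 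Now \ref{D3} is applied legitimately (anchor $\vx^k$ versus $\vx^{k+1}$), the middle piece has zero conditional mean by \ref{D1}, and the first piece uses Lipschitz continuity of $f$ (which \cite[Lemma~4.1]{DavDru19} shows follows from \ref{D1}+\ref{D3}). The resulting $2{\sf L}\alpha_k\|\vx^{k+1}-\vx^k\|$ is absorbed by the $-\tfrac{1}{2\alpha_k}\|\vx^{k+1}-\vx^k\|^2$ term already present in the three-point inequality (not by $\|\vx^{k+1}-\hat\vx^k\|^2$ as you suggest), yielding the $2{\sf L}^2\alpha_k^2$ error. Finally, the term $\psi(\hat\vx^k)-\psi(\vx^{k+1})$ is handled using the optimality of $\hat\vx^k$ for the Moreau envelope, which is also where the $(\theta-\alpha_k)$ factor emerges. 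With these two corrections --- route \ref{D3} through $\vx^k$, and convert to $\psi$ rather than leave $\varphi(\hat\vx^k)-\varphi(\vx^{k+1})$ dangling --- your outline becomes the original proof.
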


\begin{lemma}\label{lemma:step length model-based}
    For all $k$ with $\alpha_k \leq 1/{(2\eta)}$, it holds that
	\[
	\Exp[\|\vx^{k+1} - \vx^k\|^2 \mid \calF_k]  \leq (16({\sf L} + {\sf L}_\varphi )^2 + 8{\sf L}^2) \alpha_k^2. 
	\] 
\end{lemma}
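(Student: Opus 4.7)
The plan is to exploit the optimality of $\vx^{k+1}$ in the $\SMM$ subproblem together with the effective strong convexity inherited from $\eta$-weak convexity when $\alpha_k \leq 1/(2\eta)$, and then convert the resulting function-value inequality into a norm inequality using the Lipschitz conditions \ref{D3} and \ref{D4}.

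Concretely, I would first define the subproblem objective $\Psi_k(\vy) := f_{\vx^k}(\vy,\xi^k) + \varphi(\vy) + \frac{1}{2\alpha_k}\|\vy-\vx^k\|^2$. By \ref{D2}, the sum $f_{\vx^k}(\cdot,\xi^k)+\varphi$ is $\eta$-weakly convex, so $\Psi_k$ is $(\alpha_k^{-1}-\eta)$-strongly convex. The step-size restriction $\alpha_k \leq 1/(2\eta)$ yields $\eta \leq 1/(2\alpha_k)$, hence $\Psi_k$ is strongly convex with modulus at least $1/(2\alpha_k)$. Using the optimality of $\vx^{k+1}$ for $\Psi_k$ together with strong convexity and plugging in $\vy = \vx^k$ gives the inequality
\[
\Psi_k(\vx^k) \;\geq\; \Psi_k(\vx^{k+1}) + \tfrac{1}{4\alpha_k}\|\vx^{k+1}-\vx^k\|^2,
\]
which after canceling the quadratic prox term rearranges to
\[
\tfrac{3}{4\alpha_k}\|\vx^{k+1}-\vx^k\|^2 \;\leq\; [f_{\vx^k}(\vx^k,\xi^k)-f_{\vx^k}(\vx^{k+1},\xi^k)] + [\varphi(\vx^k)-\varphi(\vx^{k+1})].
\]

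Next, I would bound the two bracketed differences. Since $\vx^{k+1}\in\dom\varphi\subseteq U$, \ref{D3} applied with anchor $\vx^k$ yields $f_{\vx^k}(\vx^k,\xi^k)-f_{\vx^k}(\vx^{k+1},\xi^k) \leq {\sf L}\|\vx^{k+1}-\vx^k\|$ (almost surely in $\xi^k$), and \ref{D4} gives $\varphi(\vx^k)-\varphi(\vx^{k+1})\leq {\sf L}_\varphi\|\vx^{k+1}-\vx^k\|$. Combining these estimates and either dividing through by $\|\vx^{k+1}-\vx^k\|$ (or applying Young's inequality $ab\leq \frac{1}{4\alpha_k}a^2+\alpha_k b^2$ on the right-hand side to absorb a squared term on the left) leads to a deterministic, pointwise-in-$\xi^k$ bound of the form $\|\vx^{k+1}-\vx^k\|^2 \leq C\alpha_k^2$ with $C$ depending only on ${\sf L}$ and ${\sf L}_\varphi$; any constant $C \leq 16({\sf L}+{\sf L}_\varphi)^2 + 8{\sf L}^2$ is acceptable for the stated claim, so the loose constants can be tracked comfortably through Young's inequality applied separately to the $f$ and $\varphi$ terms (e.g., splitting ${\sf L}\|\cdot\|$ and ${\sf L}_\varphi\|\cdot\|$ as needed).

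Finally, since the preceding bound holds $\xi^k$-almost surely, taking conditional expectation with respect to $\calF_k = \sigma(\xi^0,\dots,\xi^{k-1})$ preserves the inequality and yields the stated estimate. The main technical obstacle is genuinely minor here: it is ensuring that the weak-convexity modulus $\eta$ can be correctly absorbed into the prox regularization via the step-size condition $\alpha_k\leq 1/(2\eta)$ so as to retain a uniform strong-convexity constant, and then matching the constants carefully when applying Young's inequality to arrive at the announced form $16({\sf L}+{\sf L}_\varphi)^2 + 8{\sf L}^2$. Everything else is a routine consequence of the subproblem's optimality and the one-sided Lipschitz assumptions in \ref{D3}--\ref{D4}.
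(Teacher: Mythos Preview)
Your argument is correct and in fact yields a sharper pointwise bound than the one stated: from $\tfrac{3}{4\alpha_k}\|\vx^{k+1}-\vx^k\|^2 \leq ({\sf L}+{\sf L}_\varphi)\|\vx^{k+1}-\vx^k\|$ you obtain $\|\vx^{k+1}-\vx^k\|^2 \leq \tfrac{16}{9}({\sf L}+{\sf L}_\varphi)^2\alpha_k^2$ for every realization of $\xi^k$, which trivially dominates the claimed constant after taking conditional expectation.

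The paper's proof proceeds along a slightly different route. Instead of bounding the model-function difference $f_{\vx^k}(\vx^k,\xi^k)-f_{\vx^k}(\vx^{k+1},\xi^k)$ directly via \ref{D3}, it invokes the estimate \cite[(4.8)]{DavDru19}, which already passes from the model function to the true objective $\psi$ and therefore carries an additional $2{\sf L}^2\alpha_k^2/(1-\eta\alpha_k)$ term. The resulting inequality involves $\Exp[\psi(\vx^{k+1})-\psi(\vx^k)\mid\mathcal F_k]$, which is then bounded using the Lipschitz continuity of $\psi=f+\varphi$ (here \ref{D1} and \ref{D3} together imply that $f$ is ${\sf L}$-Lipschitz), followed by Young's and Jensen's inequalities. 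Your approach is more self-contained (no external citation needed), stays at the pointwise level, and avoids the detour through the true function $\psi$; this is what eliminates the extra $8{\sf L}^2$ term. The paper's route has the advantage of reusing an estimate that is already central to the complexity analysis in \cite{DavDru19}, so it fits the paper's narrative of leveraging existing recursion bounds.
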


\textbf{Phase I: Verifying  \ref{P1}--\ref{P2}.} As before, \cite[Corollary 3.4]{HohLabObe20} implies that the mapping $\nabla \env_{\theta\psi}$ is Lipschitz continuous for all $\theta \in (0,(\tau+\eta)^{-1})$
Hence, condition \ref{P1} is satisfied. Using $\alpha_k \to 0$, we can apply \cref{Theorem:martingale_convergece} to the recursion obtained in \cref{lemma:recursion model-based} for all $k$ sufficiently large and it follows $\sum_{k=0}^\infty \alpha_k \Exp[\|\nabla \env_{\theta\psi}(\vx^k)\|^2] < \infty$.  Thus, condition \ref{P2} holds with $a = 2$.

\textbf{Phase II: Verifying  \ref{P3}--\ref{P4} for showing convergence in expectation.} Taking total expectation in \cref{lemma:step length model-based} verifies condition \ref{P3} with $q = 2$, ${\sf A} = (16({\sf L} + {\sf L}_\varphi )^2 + 8{\sf L}^2) $, $p_1 = 2$, ${\sf B} = 0$. 
Moreover, condition \ref{P4} is true by assumption \ref{D5} and the previous parameters choices. Thus, applying \Cref{thm:convergence theorem} gives $ \Exp[\|\nabla \env_{\theta\psi}(\vx^k)\|] \to 0$. 

\textbf{Phase III: Verifying  \ref{P3'}--\ref{P4'} for showing almost sure convergence.} 
As in \eqref{eq:decom update}, we can set $\bA_k = \alpha_k^{-1} (\vx^{k+1}-\vx^k-\Exp[\vx^{k+1}-\vx^k \mid \mathcal F_k])$, $\bB_k = \alpha_k^{-1} \Exp[\vx^{k+1}-\vx^k \mid \mathcal F_k]$. Applying \Cref{lemma:step length model-based} and utilizing Jensen's inequality, we have $\Exp[\mA_k \mid \calF_k] = 0$, $\Exp[\|\mA_k\|^2] \leq (4/\alpha_k^2) \Exp[\|\vx^{k+1} - \vx^k\|^2] \leq 4(16({\sf L} + {\sf L}_\varphi )^2 + 8{\sf L}^2)$ and $ \|\mB_k\|^2 \leq 16({\sf L} + {\sf L}_\varphi )^2 + 8{\sf L}^2$. Thus, condition \ref{P3'} is satisfied with $p_1 =p_2 = 1$, $q = 2$. Assumption \ref{D5}, together with  the previous parameter choices verifies condition \ref{P4'} and hence, applying \Cref{thm:convergence theorem} yields $\|\nabla \env_{\theta\psi}(\vx^k)\| \to 0$ almost surely. 

Summarizing this discussion, we obtain the following convergence results for $\SMM$. 
\begin{corollary}\label{thm:model based}
	We consider the family of stochastic model-based methods \eqref{eq:model based} for the optimization problem \eqref{eq:wcvx prob} under assumptions \ref{D1}--\ref{D5}. Let $\{\vx^k\}_{k\geq 0}$ be a generated sequence. Then, we have $\lim_{k \to \infty} \Exp[\|\nabla \env_{\theta\psi}(\vx^k)\| ] = 0$ and $\lim_{k \to \infty} \|\nabla \env_{\theta\psi}(\vx^k)\| = 0$ almost surely. 
\end{corollary}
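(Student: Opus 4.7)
My plan is to instantiate \Cref{thm:convergence theorem} with $\vPhi \equiv \nabla\env_{\theta\psi}$ and $\mu_k \equiv \alpha_k$ for some fixed $\theta \in (0,(\tau+\eta)^{-1})$, and then to verify the three groups of abstract hypotheses in turn, following the template in \Cref{sec:steps}. Since the descent recursion \Cref{lemma:recursion model-based} and the step-length bound \Cref{lemma:step length model-based} are already in hand, the work reduces to bookkeeping of constants and exponents.

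For \textbf{Phase I}, condition \ref{P1} --- Lipschitzness of $\nabla\env_{\theta\psi}$ --- is a direct consequence of the $(\tau+\eta)$-weak convexity of $\psi$ and the choice of $\theta$, quoted from \cite[Corollary 3.4]{HohLabObe20}. To establish \ref{P2} I would take total expectation in \Cref{lemma:recursion model-based}, note that $\env_{\theta\psi}(\vx)\geq \bar\psi$ and that $\alpha_k\to 0$ makes the prefactor of $\|\nabla\env_{\theta\psi}(\vx^k)\|^2$ bounded below by a positive multiple of $\alpha_k$ for all $k$ sufficiently large, while the residual term is of order $\alpha_k^2$. The Robbins--Siegmund supermartingale convergence result (\Cref{Theorem:martingale_convergece}), together with $\sum_k \alpha_k^2 < \infty$ from \ref{D5}, then yields both almost sure convergence of $\{\env_{\theta\psi}(\vx^k)\}_{k\geq 0}$ to some random variable and the summability $\sum_k \alpha_k \Exp[\|\nabla\env_{\theta\psi}(\vx^k)\|^2] < \infty$, i.e., \ref{P2} with $a=2$.

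For \textbf{Phase II}, taking total expectation in \Cref{lemma:step length model-based} gives $\Exp[\|\vx^{k+1}-\vx^k\|^2] \leq {\sf A}\alpha_k^2$ with ${\sf A} = 16({\sf L}+{\sf L}_\varphi)^2 + 8{\sf L}^2$ and no dependence on $\vPhi(\vx^k)$, so \ref{P3} holds with $q = 2$, $p_1 = 2$ and ${\sf B} = 0$; \ref{P4} is then immediate from \ref{D5}. Item (i) of \Cref{thm:convergence theorem} delivers $\Exp[\|\nabla\env_{\theta\psi}(\vx^k)\|]\to 0$. For \textbf{Phase III}, I split the update in the manner of \eqref{eq:decom update}, setting $\bA_k = \alpha_k^{-1}(\vx^{k+1}-\vx^k - \Exp[\vx^{k+1}-\vx^k \mid \calF_k])$ and $\bB_k = \alpha_k^{-1}\Exp[\vx^{k+1}-\vx^k \mid \calF_k]$ with $p_1 = p_2 = 1$. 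Two applications of Jensen's inequality combined with the pathwise bound in \Cref{lemma:step length model-based} give $\Exp[\|\bA_k\|^2] \leq 4{\sf A}$ and $\|\bB_k\|^2 \leq {\sf A}$ almost surely, verifying \ref{P3'} with $q = 2$ and $b = 0$; \ref{P4'} follows from \ref{D5} and the choices $p_1 = 1 > \tfrac12$, $p_2 = 1$. Item (ii) then gives $\|\nabla\env_{\theta\psi}(\vx^k)\| \to 0$ almost surely.

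The only real conceptual choice --- and what I would flag as the main obstacle --- is recognizing at the outset that the right abstract measure $\vPhi$ is the gradient of the Moreau envelope rather than the natural residual: the $(\tau+\eta)$-weak convexity of $\psi$ together with the restriction on $\theta$ guarantees global Lipschitzness of $\nabla\env_{\theta\psi}$ (which is exactly what \ref{P1} demands), whereas the natural residual need not be Lipschitz in the nonsmooth setting. A secondary subtlety is that the step-length bound in \Cref{lemma:step length model-based} is uniform (no $\|\vPhi(\vx^k)\|$-growth term), which is what lets one take $b = 0$ and ${\sf B} = 0$ and thus sidestep any control on $\|\bB_k\|$ beyond a deterministic constant. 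Once these two observations are in place, every remaining step is a direct verification using only elementary consequences of \ref{D5}.
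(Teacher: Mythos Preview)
Your proposal is correct and follows essentially the same approach as the paper: the paper also sets $\vPhi \equiv \nabla\env_{\theta\psi}$, $\mu_k \equiv \alpha_k$, uses \cite[Corollary 3.4]{HohLabObe20} for \ref{P1}, applies \cref{Theorem:martingale_convergece} to \cref{lemma:recursion model-based} for \ref{P2} with $a=2$, takes total expectation in \cref{lemma:step length model-based} for \ref{P3} with $q=2$, $p_1=2$, ${\sf B}=0$, and uses the decomposition \eqref{eq:decom update} together with Jensen's inequality and \cref{lemma:step length model-based} for \ref{P3'} with $p_1=p_2=1$, $q=2$, $b=0$. The constants and exponents you record match exactly.
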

%

\begin{remark}The results presented in \cref{thm:model based} also hold under certain extended settings. In fact, we can replace \ref{D3} by a slightly more general Lipschitz continuity assumption on $f$.  Moreover, it is possible to establish convergence in the case where $f$ is not Lipschitz continuous but has Lipschitz continuous gradient, which is particularly useful when we apply stochastic proximal point method for smooth $f$. A more detailed derivation and discussion of such extensions is deferred to \cref{app:smm-extension}.\vspace{-1ex}
\end{remark}

\subsection{Related work and discussion} \label{sec:literature}
\textbf{SGD and RR.} The literature for $\SGD$ is extremely rich and several connected and recent works have been discussed in \Cref{sec:intro}. Our result in \Cref{thm:SGD} unifies many of the existing convergence analyses of $\SGD$ and is based on the general ABC condition \ref{A3} (see \cite{khaled2020better,lei2019stochastic,GowSebLoi21} for comparison) rather than on the standard bounded variance assumption. Our expected convergence result generalizes the one in \cite{BotCurNoc18} using much weaker assumptions. Our results for $\RR$ are in line with the recent theoretical observations in \cite{mishchenko2020,nguyen2020unified,LiMiQiu21}. In particular, \Cref{thm:RR} recovers the almost sure convergence result shown in \cite{LiMiQiu21}, while the expected convergence result appears to be new. 

\textbf{Prox-SGD and SMM.} The work \cite{DavDru19} established one of the first complexity results for $\PSGD$ using the Moreau envelope. Under a bounded variance assumption (${\sf C} = 0$ in condition \ref{C4}) and for general nonconvex and smooth $f$, the authors showed
$\Exp[\|\nabla \env_{\theta\psi}(\vx^{\bar k})\|^2] = \mathcal O((T+1)^{-1/2})$,
where $\vx^{\bar k}$ is sampled uniformly from the past $T+1$ iterates $\vx^0,\dots,\vx^T$. As mentioned, this result cannot be easily extended to the asymptotic convergence results discussed in this paper. Earlier studies of $\PSGD$ for nonconvex $f$ and ${\sf C} = 0$ include \cite{GhaLanZha16} where convergence of $\PSGD$ is established if the variance parameter ${\sf D} = {\sf D}_k \to 0$ vanishes as $k \to \infty$. This can be achieved by progressively increasing the size of the selected mini-batches or via variance reduction techniques as in $\mathsf{prox}\text{-}\mathsf{SVRG}$ and $\mathsf{prox}\text{-}\mathsf{SAGA}$, see \cite{RedSraPocSmo16}.
The question whether $\PSGD$ can converge and whether the accumulation points of the iterates $\{\vx^k\}_{k \geq 0}$ correspond to stationary points was only addressed recently in \cite{MajMiaMou18}. The authors use a differential inclusion approach to study convergence of $\PSGD$. However, additional compact constraints $\vx \in \mathcal X$ have to be introduced in the model \eqref{eq:comp-prob} to guarantee sure boundedness of $\{\vx^k\}_{k\geq 0}$ and applicability of the differential inclusion techniques. Lipschitz continuity of $\varphi$ also appears as an essential requirement in \cite[Theorem 5.4]{MajMiaMou18}. The analyses in \cite{DucRua18,DavDruKakLee20} establish asymptotic convergence guarantees for $\SMM$. However, both works require a priori (almost) sure boundedness of $\{\vx^k\}_{k\geq 0}$ and a density / Sard-type condition in order to show convergence. We refer to \cite{GeiSca21} for an extension of the results in \cite{MajMiaMou18,DavDruKakLee20} to $\PSGD$ in Hilbert spaces. By contrast, our convergence framework allows to complement these differential inclusion-based results and --- for the first time --- fully removes any stringent boundedness assumption on $\{\vx^k\}_{k \geq 0}$. Instead, our analysis relies on more transparent assumptions that are verifiable and common in stochastic optimization and machine learning. In summary, we are now able to claim: \textit{$\PSGD$ and $\SMM$ converge under standard stochastic conditions if $\varphi$ is Lipschitz continuous}. In the easier convex case, analogous results have been obtained, e.g., in \cite{GhaLanZha16,AtcForMou17,RosVilVu20}. 

 We provide an overview of several related and representative results in \Cref{table:literature} in \Cref{appen:table}. \vspace{-1ex}

\section{Conclusion}\label{sec:conclusion}
In this work, we provided a novel convergence framework that allows to derive expected and almost sure convergence results for a vast class of stochastic optimization methods under state-of-the-art assumptions and in a unified way. We specified the steps on how to utilize our theorem in order to establish convergence results for a given stochastic algorithm. As concrete examples, we applied our theorem to derive asymptotic convergence guarantees for $\SGD$, $\RR$, $\PSGD$, and $\SMM$. To our surprise, some of the obtained results appear to be new and provide new insights into the convergence behavior of some well-known and standard stochastic methodologies. 
%
These applications revealed that our unified theorem can serve as a plugin-type tool with the potential to facilitate the convergence analysis of a wide class of stochastic optimization methods. 

Finally, it is important to investigate in which situations our convergence results in terms of the stationarity measure $\vPhi$ can be strengthened --- say to almost sure convergence guarantees for the iterates $\{\vx^k\}_{k \geq 0}$. We plan to consider such a possible extension in future work.

\begin{ack}
	The authors would like to thank the Area Chair and anonymous reviewers for their detailed and constructive comments, which have helped greatly to improve the quality and presentation of the manuscript. {In addition, we would like to thank Michael Ulbrich for valuable feedback and comments on an earlier version of this work.} 
	
	X. Li was partially supported by the National Natural Science Foundation of China (NSFC) under grant No. 12201534 and 72150002 and by the Shenzhen Science and Technology Program under Grant No. RCBS20210609103708017. A. Milzarek was partly supported by the National Natural Science Foundation of China (NSFC) -- Foreign Young Scholar Research Fund Project (RFIS) under Grant No. 12150410304 and by the Fundamental Research Fund -- Shenzhen Research Institute of Big Data (SRIBD) Startup Fund JCYJ-AM20190601.
\end{ack}

\bibliographystyle{plain}
\bibliography{references}

\newpage
{\small
\tableofcontents
}

\appendix

\newpage

\section{Proof of Theorem 2.1}\label{appen:proof main thm}
	\paragraph{\underline{Part I: Proof of item (i).}} 
First, combining $\sum_{k=0}^\infty \mu_k = \infty$ in \ref{P4} and \ref{P2}, we can infer $\liminf_{k\to\infty} \Exp[\|\vPhi(\vx^k)\|^a] = 0$. Let us assume that $\{\Exp[\|\vPhi(\vx^k)\|]\}_{k\geq 0}$ does not converge to zero. Then, there exist $\delta > 0$ and two subsequences $\{\ell_t\}_{t \geq 0}$ and $\{u_t\}_{t \geq 0}$ such that 
$\ell_t < u_t \leq \ell_{t+1}$ and
\e\label{eq:sequence construct} \Exp[\|\vPhi(\vx^{\ell_t})\|] \geq 2\delta, \quad \Exp[\|\vPhi(\vx^{u_t})\|^a] \leq \delta^a, \quad \text{and} \quad \Exp[\|\vPhi(\vx^k)\|^a] > \delta^a \ee
for all $\ell_t < k < u_t$ and $t \in \mathbb{N}$.  Combining \ref{P2} and \eqref{eq:sequence construct} and applying Jensen's inequality ($a \geq 1$), this yields
\[
\infty >  \sum_{t = 0}^{\infty } \ \sum_{k= \ell_t}^{u_t-1} \mu_k \, \Exp[\|\vPhi(\vx^k) \|^a] \geq \sum_{t = 0}^{\infty } \ \left[\sum_{k= \ell_t+1}^{u_t-1} \mu_k\delta^a + \mu_{\ell_t} (\Exp[\|\vPhi(\vx^{\ell_t})\|])^a \right] \geq \delta^a \sum_{t = 0}^{\infty } \ \sum_{k= \ell_t}^{u_t-1} \mu_k,
\]
which immediately implies $\beta_t := \sum_{k= \ell_t}^{u_t-1} \mu_k \to 0$. Since $\{\mu_k\}_{k \geq 0}$ is bounded, there exists $\bar\mu$ such that $\mu_k \leq \bar\mu$ for all $k$. For any $p \geq 1$, this further implies $\mu_k^p \leq \bar\mu^{p-1}\mu_k$ for all $k$. Using H\"older's and Jensen's inequality, $q \geq 1$, and \ref{P3}, we have 
	\begin{align} \nonumber 
		\Exp[\|\vx^{u_t} - \vx^{\ell_t}\|] &\leq  \sum_{k=\ell_t}^{u_t - 1} \Exp[\|\vx^{k+1} - \vx^{k}\|] \leq \left[\sum_{k=\ell_t}^{u_t - 1} \mu_k \right]^{1-\frac{1}{q}} \left[\sum_{k=\ell_t}^{u_t - 1} \mu_k^{-(q-1)} \Exp[\|\vx^{k+1} - \vx^{k}\|^q] \right]^{\frac{1}{q}} 	\label{eq:step length}  \\
		&\leq \beta_t^{1-\frac{1}{q}} \left[{\sum}_{k=\ell_t}^{u_t - 1}  {\sf A}\mu_k^{p_1-(q-1)} + {\sf B}\mu_k^{p_2-(q-1)} \Exp[\|\vPhi(\vx^k)\|^b] \right]^\frac{1}{q} \\
		& \nonumber \leq \beta_t^{1-\frac{1}{q}} \left[{\sum}_{k=\ell_t}^{u_t - 1}  {\sf A}\bar\mu^{p_1-q}\mu_k + {\sf B}\bar\mu^{p_2-q}\mu_k  \Exp[\|\vPhi(\vx^k)\|^b] \right]^\frac{1}{q} \\
		&\nonumber = \beta_t^{1-\frac{1}{q}} \left[ {\sf A}\bar\mu^{p_1-q}\beta_t + {\sf B}\bar\mu^{p_2-q}{\sum}_{k=\ell_t}^{u_t - 1} \mu_k  \Exp[\|\vPhi(\vx^k)\|^b] \right]^\frac{1}{q},
	\end{align}
	where we have utilized $p_1,p_2 \geq q$ in the third line. We first consider the case $a > b$. We can apply Jensen's and H\"older's inequality to obtain
	\begingroup
	\allowdisplaybreaks
	\begin{align} \nonumber {\sum}_{k=\ell_t}^{u_t - 1} \mu_k \Exp[\|\vPhi(\vx^k)\|^b] & \leq {\sum}_{k=\ell_t}^{u_t - 1} \mu_k \Exp[\|\vPhi(\vx^k)\|^a]^\frac{b}{a} \\ & \leq \left[{\sum}_{k=\ell_t}^{u_t - 1} \mu_k\right]^\frac{a-b}{a} \left[{\sum}_{k=\ell_t}^{u_t - 1} \mu_k \, \Exp[\|\vPhi(\vx^k)\|^a] \right]^\frac{b}{a} \label{eq:a-nice-inequality} \\
		&\nonumber =\beta_t^\frac{a-b}{a} \left[{\sum}_{k=\ell_t}^{u_t - 1} \mu_k \, \Exp[\|\vPhi(\vx^k)\|^a] \right]^\frac{b}{a}. 
	\end{align}
	\endgroup
	Plugging \eqref{eq:a-nice-inequality} into \eqref{eq:step length} and invoking \ref{P2} and $ \beta_t \to 0$, we have $\Exp[\|\vx^{u_t}-\vx^{\ell_t}\|] \to 0$ as $t \to \infty$. We now consider the case $a = b$.  Let us introduce $E_k := \sum_{i=0}^{k-1} \mu_i \, \Exp[\|\vPhi(\vx^i)\|^a]$. By \ref{P2}, the sequence $\{E_k\}_{k\geq 1}$ converges and hence, we have $E_{u_t}-E_{\ell_t}\to 0$ as $t\to \infty$.  It then follows from \eqref{eq:step length} that 
	\[ \Exp[\|\vx^{u_t} - \vx^{\ell_t}\|] \leq \beta_t^{1-\frac{1}{q}} \left[ {\sf A}\beta_t + {\sf B}[E_{u_t}-E_{\ell_t}] \right]^\frac{1}{q} \to 0 \quad \text{as} \quad t\to \infty.\] 
	Together, this establishes $\Exp[\|\vx^{u_t} - \vx^{\ell_t}\|]  \to 0$ as $t \to \infty$ in both cases. However, by the Lipschitz continuity of $\vPhi$ in \ref{P1}, $a\geq 1$ in \ref{P4},  and the construction \eqref{eq:sequence construct},  we have 
\e\label{eq:contradiction 2}
\begin{split}
	\delta &\leq \Exp[\|\vPhi(\vx^{\ell_t})\|]  - \Exp[\|\vPhi(\vx^{u_t}) \|] \leq \Exp[\|\vPhi(\vx^{u_t}) - \vPhi(\vx^{\ell_t}) \|]  \leq {\sf L}_{\Phi} \Exp[\|\vx^{u_t} - \vx^{\ell}\|].
\end{split}
\ee
By letting $t\rightarrow \infty$ in \eqref{eq:contradiction 2}, we get a contradiction. This concludes the proof of item (i). 

\paragraph{\underline{Part II: Proof of item (ii).}} In order to control the stochastic behavior of the error terms $\bA_k$ and to establish the almost sure convergence of the sequence $\{\|\vPhi(\vx^k)\|\}_{k \geq 0}$, we will utilize several results from martingale theory. 

\begin{defn}[Martingale]
	\label{Def:mart}
	Let $(\Omega, \mathcal{U}, \Prob)$ be a probability space and $\{\mathcal{U}_{k}\}_{k \geq 0}$ a family of increasing sub-$\sigma$-fields of $\mathcal{U} .$ A random process $\left\{\bM_{k}\right\}_{k \geq 0}$ defined on this probability space is said to be a martingale with respect to the family $\{\mathcal{U}_{k}\}_{k \geq 0}$ (or an $\{\mathcal{U}_{k}\}_{k \geq 0}$-martingale) if each $\bM_k$ is integrable and $\mathcal{U}_{k}$-measurable and we have $\Exp[\bM_{k+1} \mid \mathcal{U}_{k}]=\bM_{k}$ a.s. for all $k$.
\end{defn}

Next, we state a standard convergence theorem for vector-valued martingales, see, e.g., \cite[Theorem 5.2.22 and Section 5.3]{Str11} or \cite[Theorem 5.14]{Bre92}.

\begin{thm}[Martingale Convergence Theorem]
	\label{Thm:mart_conv}
	Let $\{\bM_k\}_{k \geq 0}$ be a given vector-valued $\{\mathcal{U}_k\}_{k \geq 0}$-martingale as specified in \cref{Def:mart}. If $\sup_k \Exp[\|\bM_k\|] < \infty$, then $\{\bM_{k}\}_{k \geq 0}$ converges almost surely to an integrable random vector $\bM$.
\end{thm}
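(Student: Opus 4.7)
The plan is to reduce the vector-valued statement to the classical scalar martingale convergence theorem and then prove the scalar version via Doob's upcrossing inequality. Writing $\bM_k = (M_k^{(1)}, \ldots, M_k^{(d)})$ coordinatewise, each projection $\{M_k^{(i)}\}_{k \geq 0}$ is again an $\{\mathcal{U}_k\}$-martingale, since conditional expectation commutes with the linear coordinate projections and $M_k^{(i)}$ inherits $\mathcal{U}_k$-measurability and integrability from $\bM_k$. The bound $|M_k^{(i)}| \leq \|\bM_k\|$ yields $\sup_k \Exp[|M_k^{(i)}|] \leq \sup_k \Exp[\|\bM_k\|] < \infty$. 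If I establish the scalar theorem, each coordinate converges a.s.\ to some finite limit $M^{(i)}$, and then $\bM_k \to \bM := (M^{(1)},\ldots,M^{(d)})$ almost surely in norm; integrability of $\bM$ follows from Fatou's lemma applied to $\|\bM_k\|$, using $\Exp[\|\bM\|] \leq \liminf_k \Exp[\|\bM_k\|] < \infty$.

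For the scalar case, I would first prove Doob's upcrossing inequality: for any real numbers $a < b$ and every $n$, the number $U_n([a,b])$ of upcrossings of $[a,b]$ by $M_0,\ldots,M_n$ satisfies
\[ (b-a)\,\Exp[U_n([a,b])] \;\leq\; \Exp[(M_n-a)^+] - \Exp[(M_0-a)^+] \;\leq\; |a| + \Exp[|M_n|]. \]
The standard derivation constructs the predictable $\{0,1\}$-valued ``gambling'' process $H_k$ that is on during an upcrossing attempt and off otherwise, notes that $((M_k - a)^+)_{k \geq 0}$ is a nonnegative submartingale (by Jensen's inequality applied to the convex map $x \mapsto (x-a)^+$), and then bounds the expected gains $\Exp[\sum_{j=1}^n H_j ((M_j-a)^+ - (M_{j-1}-a)^+)]$ from two sides.

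Letting $n \to \infty$ and using $\sup_k \Exp[|M_k|] < \infty$ together with monotone convergence in $U_n([a,b]) \uparrow U_\infty([a,b])$ gives $\Exp[U_\infty([a,b])] < \infty$, hence $U_\infty([a,b]) < \infty$ a.s. Intersecting over the countable collection of rational pairs $a < b$, with probability one $U_\infty([a,b])$ is finite for every such pair simultaneously. On this full-measure event, $\liminf_k M_k$ and $\limsup_k M_k$ must coincide, because any strict gap would be straddled by a rational pair and would force infinitely many upcrossings. Thus $M_k$ converges in $[-\infty,+\infty]$ to some limit $M$, and Fatou's lemma gives $\Exp[|M|] \leq \liminf_k \Exp[|M_k|] < \infty$, so $M$ is integrable and finite a.s.

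The only substantive obstacle is the upcrossing inequality itself: the combinatorial idea is transparent, but one must carefully define the stopping times that delineate successive down- and up-crossings, verify predictability of the gambling increments, and exploit the submartingale property of $((M_k-a)^+)_{k\geq 0}$. Once this bound is in hand, the remainder is bookkeeping via Fatou and a countable union; the vector lift is immediate from coordinatewise convergence. I would cite \cite{Str11,Bre92} for the upcrossing inequality rather than reproduce it, since it is entirely classical.
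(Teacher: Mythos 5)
Your proposal is correct, but note that the paper does not prove this statement at all: it is quoted as a classical result with pointers to \cite[Theorem 5.2.22 and Section 5.3]{Str11} and \cite[Theorem 5.14]{Bre92}, which is also what you suggest doing for the upcrossing inequality. What you have written is essentially the standard textbook proof underlying those references: reduce the $\R^n$-valued case to the scalar case coordinatewise (legitimate here since the dimension is finite, conditional expectation commutes with the linear projections, and $|M_k^{(i)}|\leq\|\bM_k\|$ transfers the $L^1$ bound), prove the scalar case via Doob's upcrossing inequality applied to the nonnegative submartingale $((M_k-a)^+)_{k\geq 0}$, intersect over rational pairs $a<b$, and use Fatou's lemma to get integrability and a.s.\ finiteness of the limit. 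All of these steps are sound; the only cosmetic remark is that your two-sided bound on the expected upcrossings simply discards the nonnegative term $\Exp[(M_0-a)^+]$, which is fine. So relative to the paper your contribution is to make the cited black box self-contained, at the cost of carrying out (or again citing) the upcrossing construction with its stopping-time bookkeeping; the paper's choice to cite buys brevity, yours buys a proof that is elementary and independent of external sources, and for the paper's purposes (where $\bM_k=\sum_{i=0}^{k-1}\mu_i^{p_1}\bA_i$ lives in $\R^n$) the finite-dimensional coordinatewise lift is all that is ever needed.
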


%
%
%

\textbf{Step (a): Analysis of the error terms $\{\bA_k\}_{k \geq 0}$.} Let us define $\bM_k := \sum_{i=0}^{k-1}\mu_i^{p_1}\bA_i$. Then, it holds that
\[ \Exp[\bM_{k+1} \mid \mathcal F_k] = {\sum}_{i=0}^{k-1} \mu_{i}^{p_1}\bA_{i} + \mu_k^{p_1} \Exp[\bA_k \mid \mathcal F_k] = \bM_k,  \]
i.e., $\{\bM_k\}_{k \geq 0}$ defines a martingale adapted to the filtration $\{\mathcal F_k\}_{k \geq 0}$. In addition, inductively, we obtain 
\begin{align*} 
	\Exp[\|\bM_k\|^2] & = \Exp[\|\bM_{k-1}\|^2] + 2\mu_{k-1}^{p_1}\Exp[\iprod{\bM_{k-1}}{\bA_{k-1}}] +\mu_{k-1}^{2p_1} \Exp[\|\bA_{k-1}\|^2] \\ & = \cdots  = {\sum}_{i=0}^{k-1} \mu_i^{2p_1} \Exp[\|\bA_i\|^2] \leq {\sum}_{i=0}^{k-1} \mu_i^{2p_1} \Exp[\|\bA_i\|^q]^{\frac{2}{q}} \leq {\sf A}^{\frac{2}{q}} {\sum}_{i=0}^{k-1} \mu_i^{2p_1},
\end{align*}
where we utilized Jensen's inequality,  the concavity of the mapping $x \mapsto x^{\frac{2}{q}}$, and $\Exp[\|\bA_i\|^q]\leq {\sf A}$ in \ref{P3'}. Hence, by \ref{P4'} and Jensen's inequality, we can infer $\sup_{k} \Exp[\|\bM_k\|] \leq {\sf A}^\frac{1}{q}[{\sum}_{i=0}^{\infty} \mu_i^{2p_1}]^\frac12 < \infty$. \cref{Thm:mart_conv} then implies that $\{\bM_k\}_{k \geq 0}$ converges almost surely to some integrable random vector $\bM$.

Next, we establish almost sure convergence of $\{\|\vPhi(\vx^k)\|\}_{k \geq 0}$. Our derivation generally mimics the proof of item (i), but uses sample-based arguments.

\textbf{Step (b): Almost sure convergence.} First, applying the monotone convergence theorem to \ref{P2} gives $\Exp[\sum_{k=0}^\infty \mu_k\|\vPhi(\vx^k)\|^a] = \lim_{T \to \infty} \sum_{k=0}^T \mu_k \Exp[\|\vPhi(\vx^k)\|^a] < \infty$, which immediately implies $\sum_{k=0}^\infty \mu_k\|\vPhi(\vx^k)\|^a < \infty$ almost surely. This, together with $\sum_{k=0}^\infty \mu_k = \infty$, yields $\liminf_{k\to \infty} \|\vPhi(\vx^k)\| = 0$ almost surely. 
%
%
We now consider an arbitrary sample $\omega \in \mathcal M$ where
\begingroup
\allowdisplaybreaks
\begin{align*} \mathcal M &:= \left\{\omega: {\sum}_{k=0}^\infty \mu_k\|\vPhi(\vx^k(\omega))\|^a < \infty, \;\; \lim_{k\to\infty} \bM_k(\omega) = \bM(\omega), \right. \\ & \hspace{20ex} \left. \;\; \text{and} \;\; \limsup_{k \to \infty} \frac{\|\mB_k(\omega)\|^q}{1+\|\vPhi(\vx^k(\omega))\|^b} < \infty \right\}. \end{align*}
\endgroup
Our preceding discussion implies that the event $\mathcal M$ occurs with probability $1$ and it holds that $\liminf_{k\to \infty} \|\vPhi(\vx^k(\omega))\| = 0$. Let us assume that $\{\|\vPhi(\vx^k(\omega))\|\}_{k\geq 0}$ does not converge to zero. Then, there exist $\delta > 0$ and two subsequences $\{\ell_t\}_{t \geq 0}$ and $\{u_t\}_{t \geq 0}$ such that $\ell_t < u_t \leq \ell_{t+1}$ and
\e\label{eq:sequence-as} \|\vPhi(\vx^{\ell_t}(\omega))\| \geq 2\delta, \quad \|\vPhi(\vx^{u_t}(\omega))\| \leq \delta, \quad \text{and} \quad \|\vPhi(\vx^k(\omega))\| > \delta \ee
for all $\ell_t < k < u_t$ and $t \in \mathbb{N}$. (Notice that in contrast to the proof of item (i), the sequences $\{\ell_t\}_{t \geq 0}$, $\{u_t\}_{t \geq 0}$, and $\delta$ will now generally depend on the selected sample $\omega$). Due to $\omega \in \mathcal M$, this yields
\[ \infty >  {\sum}_{t = 0}^{\infty } \ {\sum}_{k= \ell_t}^{u_t-1} \mu_k \|\vPhi(\vx^k(\omega)) \|^a \geq \delta^a {\sum}_{t = 0}^{\infty } \ {\sum}_{k= \ell_t}^{u_t-1} \mu_k, \]
which again implies $\beta_t := \sum_{k=\ell_t}^{u_t-1}\mu_k \to 0$. Furthermore, there exist $K \in \N$ and $B \in \R$
such that $\|\mB_k(\omega)\| \leq B(1+\|\vPhi(\vx^k(\omega))\|^b)^{{1}/{q}}$ for all $k \geq K$. We first consider the case $qa > b$. It follows from \ref{P3'} that 
%
%
\begingroup
\allowdisplaybreaks
\begin{align*} \|\vx^{u_t}(\omega)-\vx^{\ell_t}(\omega)\| & \leq B{\sum}_{k = \ell_t}^{u_t-1} \mu_k^{p_2}(1+\|\vPhi(\vx^k(\omega))\|^b)^\frac{1}{q} + \left\|{\sum}_{k = \ell_t}^{u_t-1} \mu_k^{p_1} \bA_k(\omega) \right\| \\ 
	&\leq  B{\sum}_{k = \ell_t}^{u_t-1} \mu_k   + B{\sum}_{k = \ell_t}^{u_t-1} \mu_k \|\vPhi(\vx^k(\omega))\|^{\frac{b}{q}}  + \left\|{\sum}_{k = \ell_t}^{u_t-1} \mu_k^{p_1} \bA_k(\omega) \right\| \\ 
	& \leq  B\beta_t + B\left({\sum}_{k = \ell_t}^{u_t-1} \mu_k\right)^{1-\frac{b}{qa}}\left({\sum}_{k = \ell_t}^{u_t-1} \mu_k \|\vPhi(\vx^k(\omega))\|^a\right)^\frac{b}{qa} \\ & \hspace{4ex} + \|\bM_{u_t}(\omega)-\bM_{\ell_t}(\omega)\| \\ & \leq B\beta_t + B\beta_t^{1-\frac{b}{qa}} \left({\sum}_{k = 0}^{\infty} \mu_k \|\vPhi(\vx^k(\omega))\|^a\right)^\frac{b}{qa} + \|\bM_{u_t}(\omega)-\bM_{\ell_t}(\omega)\| \end{align*}
\endgroup
for all $t$ sufficiently large, where we used the subadditivity of $x \mapsto x^\frac{1}{q}$,  $p_2 \geq 1$, and  $\mu_k \to 0$ in the second inequality, and H\"older's inequality in the third inequality. In the case $qa = b$, let us introduce $\bE_k(\omega) := \sum_{i=0}^{k-1}\mu_k\|\vPhi(\vx^k(\omega))\|^a$. Then, it follows
\[ \|\vx^{u_t}(\omega)-\vx^{\ell_t}(\omega)\| \leq B\beta_t + B[\bE_{u_t}(\omega)-\bE_{\ell_t}(\omega)] + \|\bM_{u_t}(\omega)-\bM_{\ell_t}(\omega)\| \]
for all $t$ sufficiently large. Due to $\beta_t \to 0$, $\bE_{k}(\omega) \to \bE(\omega) := \sum_{k=0}^\infty \mu_k\|\vPhi(\vx^k(\omega))\|^a$, and $\bM_k(\omega) \to \bM(\omega)$, we can infer $\|\vx^{u_t}(\omega)-\vx^{\ell_t}(\omega)\| \to 0$ in both cases. As before, the Lipschitz continuity of $\vPhi$ in \ref{P1} yields the contradiction
\[ \delta \leq \|\vPhi(\vx^{\ell_t}(\omega))\|  - \|\vPhi(\vx^{u_t}(\omega)) \| \leq \|\vPhi(\vx^{u_t}(\omega)) - \vPhi(\vx^{\ell_t}(\omega)) \|  \leq {\sf L}_{\Phi} \|\vx^{u_t}(\omega) - \vx^{\ell}(\omega)\| \to 0. \]
Thus, for all $\omega \in \mathcal M$, we have $\lim_{k \to \infty} \|\vPhi(\vx^k(\omega))\| = 0$. Since the event $\mathcal M$ occurs with probability $1$, this concludes the proof of item (ii).

\section{The supermartingale convergence theorem}

The following well-known and celebrated convergence theorem for supermartingale-type stochastic processes is due to Robbins and Siegmund \cite{RobSie71}.

\begin{thm}[Supermartingale Convergence Theorem]
	\label{Theorem:martingale_convergece}
	Let $\{y_k\}_{k\geq 0}$, $\{p_k\}_{k\geq 0}$, and $\{q_k\}_{k\geq 0}$ be sequences of nonnegative integrable random variables adapted to a filtration $\{\mathcal U_k\}_{k\geq 0}$. Furthermore, let $\{\beta_k\}_{k\geq 0} \subseteq \R_+$ be given with $\sum_{k=0}^\infty \beta_k < \infty$ and assume that we have 
	\[
	     \Exp[y_{k+1} \mid \mathcal U_k]\leq (1+\beta_k)y_k-p_k+ q_k
	\]
	 for all $k$ and $\sum_{k=0}^{\infty} q_k<\infty$ almost surely. Then, it holds that
	\begin{enumerate}[label=\textup{\textrm{(\alph*)}},topsep=0pt,itemsep=0ex,partopsep=0ex]
		\item If $\sum_{k=0}^\infty \Exp[q_k] < \infty$, then the sequence $\{\Exp[y_k]\}_{k\geq 0}$ converges to a finite number $\overline y$ and we have $\sum_{k=0}^{\infty} \Exp[p_k]<\infty$.
		\item $\{y_k\}_{k\geq 0}$ almost surely converges to a nonnegative finite random variable $y$ and it follows  $\sum_{k=0}^{\infty} p_k<\infty$ almost surely.
	\end{enumerate}
\end{thm}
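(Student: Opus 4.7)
The strategy is the standard one for Robbins--Siegmund: first absorb the multiplicative perturbation $(1+\beta_k)$ into a deterministic normalization, then construct an auxiliary process that is a genuine supermartingale and invoke Doob's vector/scalar martingale convergence theorem. Because the assumption in part (b) only provides $\sum q_k<\infty$ almost surely (not in expectation), a localization by stopping times will be required to ensure integrability before we can pass to the limit.

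\textbf{Step 1: Normalization.} Set $\alpha_k := \prod_{i=0}^{k-1}(1+\beta_i)$ with $\alpha_0:=1$. Since $\sum_k\beta_k<\infty$, the sequence $\{\alpha_k\}$ is bounded and converges to $\alpha_\infty\in[1,\infty)$. Dividing the recursion by $\alpha_{k+1}$ and defining $\tilde y_k:=y_k/\alpha_k$, $\tilde p_k:=p_k/\alpha_{k+1}$, $\tilde q_k:=q_k/\alpha_{k+1}$ yields
\[
\Exp[\tilde y_{k+1}\mid\mathcal U_k]\leq \tilde y_k-\tilde p_k+\tilde q_k \quad\text{a.s.}
\]
Because $\alpha_k$ is deterministic and bounded above and below by positive constants, the summability hypotheses on $\{q_k\}$ (and, in part (a), on $\{\Exp[q_k]\}$) transfer to $\{\tilde q_k\}$, and the conclusions for $\{\tilde y_k,\tilde p_k\}$ transfer back to $\{y_k,p_k\}$. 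So I may henceforth assume $\beta_k\equiv 0$.

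\textbf{Step 2: Part (a).} Taking total expectations in the reduced recursion gives $\Exp[y_{k+1}]\leq\Exp[y_k]-\Exp[p_k]+\Exp[q_k]$. Consequently the deterministic sequence $a_k:=\Exp[y_k]+\sum_{i=0}^{k-1}\Exp[p_i]-\sum_{i=0}^{k-1}\Exp[q_i]$ is monotonically nonincreasing. Under the extra hypothesis $\sum_k\Exp[q_k]<\infty$, we have $a_k\geq-\sum_{i=0}^{\infty}\Exp[q_i]>-\infty$, so $a_k\to a_\infty\in\R$. Since $\sum_{i=0}^{k-1}\Exp[q_i]$ converges, it follows that $\Exp[y_k]+\sum_{i=0}^{k-1}\Exp[p_i]$ converges to a finite limit; as both summands are nonnegative and the second is nondecreasing, I conclude $\sum_{k=0}^\infty\Exp[p_k]<\infty$ and $\Exp[y_k]\to\overline y$ for some $\overline y\in[0,\infty)$.

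\textbf{Step 3: Part (b), the auxiliary supermartingale.} Define $W_k:=y_k+\sum_{i=0}^{k-1}p_i-\sum_{i=0}^{k-1}q_i$. A direct computation using the recursion shows $\Exp[W_{k+1}\mid\mathcal U_k]\leq W_k$, so $\{W_k\}$ is an $\{\mathcal U_k\}$-supermartingale; however, $W_k$ need not be bounded below or even integrable because the expectation of $\sum q_i$ may diverge. To fix this I localize: since each $q_i$ is $\mathcal U_i$-measurable, the random time
\[
\sigma_N:=\inf\Bigl\{k\geq 0\ :\ \sum_{i=0}^{k-1}q_i>N\Bigr\}
\]
is a stopping time for every $N\in\N$. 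On $\{k<\sigma_N\}$ one has $\sum_{i=0}^{k-1}q_i\leq N$, so the stopped process satisfies $W_{k\wedge\sigma_N}\geq y_{k\wedge\sigma_N}-N\geq -N$. Then $W_{k\wedge\sigma_N}+N$ is a nonnegative supermartingale with $\Exp[W_{k\wedge\sigma_N}+N]\leq\Exp[y_0]+N<\infty$, and Doob's supermartingale convergence theorem yields almost sure convergence of $W_{k\wedge\sigma_N}$ to a finite limit.

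\textbf{Step 4: Piecing together.} Since $\sum_{i=0}^\infty q_i<\infty$ almost surely, the event $\Omega_\infty:=\bigcup_{N\in\N}\{\sigma_N=\infty\}$ has probability one. On each $\{\sigma_N=\infty\}$ the stopped and unstopped processes coincide, so $W_k$ converges almost surely on $\Omega_\infty$. Because $\sum_{i=0}^{k-1}q_i\to Q_\infty\in[0,\infty)$ a.s., the convergence of $W_k$ forces $y_k+\sum_{i=0}^{k-1}p_i$ to converge almost surely; as $\sum_{i=0}^{k-1}p_i$ is nondecreasing and nonnegative, it must itself converge (i.e.\ $\sum_{k=0}^\infty p_k<\infty$ a.s.), and therefore $y_k$ converges almost surely to a nonnegative finite random variable $y$. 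Finally, undoing the normalization of Step~1 gives $y_k=\alpha_k\tilde y_k\to\alpha_\infty\tilde y_\infty$, establishing both conclusions in the original coordinates.

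\textbf{Main obstacle.} The delicate point is clearly Step~3: the natural ``supermartingale'' $W_k$ is not a priori integrable or bounded below when only $\sum q_k<\infty$ is assumed pathwise. The stopping-time truncation is precisely the standard device that sidesteps this issue, and the rest of the argument is routine once that hurdle is cleared.
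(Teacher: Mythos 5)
Your overall route is the classical Robbins--Siegmund argument; note that the paper does not prove this theorem at all but simply cites Robbins and Siegmund, so your proposal is judged against the standard argument. Steps 1, 2 and 4 are correct: the normalization by $\alpha_k=\prod_{i<k}(1+\beta_i)$ is legitimate because $\sum_k\beta_k<\infty$ makes $\{\alpha_k\}$ bounded away from $0$ and $\infty$, part (a) is a clean telescoping argument, and the bookkeeping in Step 4 (monotone nonnegative partial sums plus a convergent sequence) is fine. A minor slip: $W_k$ \emph{is} integrable for every $k$ (it is a finite sum of integrable variables); the genuine obstruction is only the lack of a uniform bound on its negative part.

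The real problem is an off-by-one error in Step 3 that invalidates the key inequality. With $\sigma_N:=\inf\{k:\sum_{i=0}^{k-1}q_i>N\}$, the bound $\sum_{i=0}^{k-1}q_i\leq N$ holds only on $\{k<\sigma_N\}$; at $k=\sigma_N$ one has, by the very definition of $\sigma_N$, $\sum_{i<\sigma_N}q_i>N$, and since $q_{\sigma_N-1}$ is not bounded, $W_{\sigma_N}$ can lie arbitrarily far below $-N$. Hence the claim $W_{k\wedge\sigma_N}\geq -N$ is false on $\{\sigma_N\leq k\}$, the process $W_{k\wedge\sigma_N}+N$ need not be nonnegative, and Doob's theorem cannot be invoked as stated: it would require $\sup_k\Exp[(W_{k\wedge\sigma_N})^-]<\infty$, and this can genuinely fail when only $\sum_k q_k<\infty$ a.s.\ (not $\sum_k\Exp[q_k]<\infty$) is assumed, e.g.\ with independent $q_j$ equal to a huge value $M_j$ with small probability $\epsilon_j$, $M_j\epsilon_j=1$, $\sum_j\epsilon_j<\infty$, so that $\Exp[q_{\sigma_N-1}\mathbf{1}_{\{\sigma_N<\infty\}}]=\infty$. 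The standard repair is exactly a one-index shift: use $\rho_N:=\inf\{k:\sum_{i=0}^{k}q_i>N\}$, which is a stopping time because the $q_k$ are adapted ($\{\rho_N\leq k\}=\{\sum_{i=0}^{k}q_i>N\}\in\mathcal U_k$), and for which $k\leq\rho_N$ implies $\sum_{i=0}^{k-1}q_i\leq N$, so that $W_{k\wedge\rho_N}\geq y_{k\wedge\rho_N}-N\geq-N$ for \emph{all} $k$. Then $W_{k\wedge\rho_N}+N$ is a nonnegative supermartingale, Doob applies, and since $\{\rho_N=\infty\}=\{\sum_{i=0}^\infty q_i\leq N\}$ increases to a set of probability one, your Step 4 goes through verbatim. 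With this correction the proposal is a complete and correct proof.
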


\section{Proofs for Subsection 3.1}
\subsection{Derivation of (6)}\label{appen:SGD}
Using the Lipschitz continuity of $\nabla f$ and the descent lemma, we obtain
\begingroup
\allowdisplaybreaks
\begin{align*}
	f(\vx^{k+1}) & \leq f(\vx^k) + \iprod{\nabla f(\vx^k)}{\vx^{k+1}-\vx^k} + \frac{{\sf L}}{2} \|\vx^{k+1}-\vx^k\|^2 \\ & = f(\vx^k) + \alpha_k \iprod{\nabla f(\vx^k)}{\nabla f(\vx^k)-\vg^k} - \alpha_k \|\nabla f(\vx^k)\|^2 + \frac{{\sf L}\alpha_k^2}{2} \|\vg^k\|^2 \\ & = f(\vx^k) - \alpha_k \left(1-\frac{{\sf L}\alpha_k}{2}\right) \|\nabla f(\vx^k)\|^2 + \alpha_k(1-{\sf L}\alpha_k) \iprod{\nabla f(\vx^k)}{\nabla f(\vx^k)-\vg^k} \\
	&\quad + \frac{{\sf L}\alpha_k^2}{2} \|\nabla f(\vx^k)-\vg^k\|^2. 
\end{align*}
\endgroup
Taking conditional expectation gives \eqref{eq:SGD recursion}.

\section{Results and proofs for Subsection 3.2}
We now provide more detailed algorithmic procedure and motivations for $\RR$. First, let us define the set of all possible permutations of $\{1,2,\ldots, N\}$  as 
\[
\Lambda:= \{\sigma: \sigma \text{ is a permutation of } \{1,2,\ldots, N\}\}. 
\]
At each iteration $k$, a  permutation $\sigma^{k+1}$   is generated according to an i.i.d. uniform distribution over $\Lambda$. Then, $\RR$ updates $\vx^{k}$ to $\vx^{k+1}$ through $N$ consecutive gradient descent-type steps by using the components $\{f(\cdot, \sigma^{k+1}_1), \ldots, f(\cdot, \sigma^{k+1}_N)\}$ sequentially, where  $\sigma^{k+1}_i$ represents the $i$-th element of $\sigma^{k+1}$.  In each  step, only one component $f(\cdot, \sigma^{k+1}_i)$ is selected for updating. To be more specific,  this method starts with $\tilde \vx^k_0 = \vx^{k}$ and then uses $f(\cdot, \sigma^{k+1}_i)$ to update $\tilde \vx^k_{i}$  as 
\[
\tilde \vx^k_{i} = \tilde \vx^k_{i-1} - \alpha_k \nabla f(\tilde \vx^k_{i-1}, \sigma^{k+1}_i) 
\]
for $i=1, 2, \ldots, N$, resulting in $\vx^{k+1} = \tilde \vx^k_N$. 

$\RR$ is used in a vast variety of engineering fields.  Most notably, $\RR$ is extensively applied in practice for training deep neural networks; see, e.g., \cite{gurbu2019,mishchenko2020,nguyen2020unified} and the references therein.

\subsection{A bound on the step length} \label{sec:app-RR-one}
We need the following lemma to for our later analysis. 
\begin{lemma}\label{lem:RR}
	We have the following estimate:
	\e\label{eq:RR step length}
	\|\vx^{k+1}-\vx^k\|^q \leq \bar {\sf A}{\sf G}(\vx^0)^{\frac{q}{2}} \alpha_k^q
	\ee
	for all $k$ sufficiently large and some positive constants $\bar {\sf A}$ and ${\sf G}(\vx^0)^{\frac{q}{2}}$.
\end{lemma}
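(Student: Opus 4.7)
The plan is to expand the outer-loop step as a telescoping sum of the $N$ inner gradient steps,
\[ \vx^{k+1} - \vx^k = -\alpha_k \sum_{i=1}^N \nabla f(\tilde\vx_{i-1}^k, \sigma_i^{k+1}), \]
and then combine the triangle inequality with the power-mean inequality (since $q \geq 1$) to obtain
\[ \|\vx^{k+1}-\vx^k\|^q \leq N^{q-1} \alpha_k^q \sum_{i=1}^N \|\nabla f(\tilde\vx_{i-1}^k, \sigma_i^{k+1})\|^q. \]
The standard consequence of \ref{B1} (each $f(\cdot, i)$ has ${\sf L}$-Lipschitz gradient and is bounded below by $\bar f$) is the estimate $\|\nabla f(\vx, i)\|^2 \leq 2{\sf L}(f(\vx, i) - \bar f)$ for every $\vx \in \Rn$ and $i \in \{1, \ldots, N\}$. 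Hence the proof reduces to establishing a uniform (in $k$ and $i$) upper bound on $f(\tilde\vx_{i-1}^k, \sigma_i^{k+1}) - \bar f$.

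Next, I would control the drift of the inner iterates from $\vx^k$. Because $\alpha_k \to 0$ by \ref{B2}, for all $k$ sufficiently large we may assume $\alpha_k N {\sf L}$ is below an absolute threshold. An inductive bookkeeping of the identity $\tilde\vx_{i-1}^k - \vx^k = -\alpha_k \sum_{j=1}^{i-1} \nabla f(\tilde\vx_{j-1}^k, \sigma_j^{k+1})$, together with Lipschitz continuity of each $\nabla f(\cdot, j)$ and the descent lemma applied to $f(\cdot, j)$, then yields an inequality of the form
\[ f(\tilde\vx_{i-1}^k, \sigma_i^{k+1}) - \bar f \leq C_1 \max_{j \in \{1, \ldots, N\}} [f(\vx^k, j) - \bar f] + C_2 \alpha_k^2, \]
for universal constants $C_1, C_2 > 0$ depending only on $N$ and ${\sf L}$, valid for all $k$ large.

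Finally, to produce a uniform bound on $f(\vx^k, j) - \bar f$ that traces back to the initialization, I would invoke the outer-loop RR recursion used earlier to verify \ref{P2} for $\RR$ in the proof of \Cref{thm:RR}. Under \ref{B1}--\ref{B2}, that analysis yields a supermartingale-type inequality of the form $\Exp[f(\vx^{k+1}) - \bar f \mid \mathcal F_k] \leq (1+\beta_k)(f(\vx^k) - \bar f) - p_k + q_k$ with $\sum_k \beta_k, \sum_k q_k < \infty$ almost surely (both summability claims stemming from $\sum_k \alpha_k^3 < \infty$). Applying \Cref{Theorem:martingale_convergece} then gives almost sure convergence, hence almost sure boundedness, of $\{f(\vx^k) - \bar f\}_{k \geq 0}$. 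Since $f(\vx^k) - \bar f = \frac1N \sum_j [f(\vx^k, j) - \bar f]$ is a sum of nonnegative terms, each component $f(\vx^k, j) - \bar f$ is individually bounded. Setting ${\sf G}(\vx^0)$ to be this sample-path-dependent bound (which indeed depends only on $\vx^0$ once the sample path is fixed) and collecting the constants above gives the stated estimate with $\bar {\sf A}$ a multiple of $N^q (2{\sf L})^{q/2}$.

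The main obstacle is the interlocking nature of the inner-loop and outer-loop estimates: the inner-loop control requires the outer-loop function values to be bounded, while the RR recursion that delivers this outer-loop boundedness itself rests on analogous inner-loop gradient estimates. Carefully separating these two layers and verifying that the RR recursion goes through under only the mild assumptions \ref{B1}--\ref{B2} (without, e.g., assuming uniformly bounded stochastic gradients) is where the real work lies; once this is in place, the step-length bound \eqref{eq:RR step length} follows from the chain of inequalities above.
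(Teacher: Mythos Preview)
Your overall strategy---telescope the inner loop, bound each component gradient via $\|\nabla f(\vx,i)\|^2 \leq 2{\sf L}(f(\vx,i)-\bar f)$, then control function values uniformly in $k$---matches the paper's. The paper's decomposition is slightly different (it centers at $\nabla f(\vx^k)$ and isolates the inner-loop drift $V_k := \sum_{i=1}^N \|\tilde\vx_{i-1}^k - \vx^k\|^q$, then bounds $V_k$ by a self-referential recursion yielding $V_k = \mathcal O(\alpha_k^q (f(\vx^k)-\bar f)^{q/2})$), but your direct bound on the individual $\|\nabla f(\tilde\vx_{i-1}^k,\sigma_i^{k+1})\|$ would lead to the same place.

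The genuine gap is in how you bound $f(\vx^k)-\bar f$. You invoke the supermartingale convergence theorem to obtain an almost surely finite, but \emph{sample-path-dependent}, bound. The lemma, however, asserts \emph{deterministic} constants $\bar{\sf A}$ and ${\sf G}(\vx^0)$, and this determinism is essential downstream: in verifying \ref{P3} for $\RR$ one needs $\Exp[\|\vx^{k+1}-\vx^k\|^q] \leq {\sf A}\alpha_k^q$ with ${\sf A}$ a fixed number, and a random bound does not deliver this (for $q>2$ you would need higher moments of the random bound that the Robbins--Siegmund theorem does not supply). The paper avoids this entirely by observing that the $\RR$ recursion \eqref{eq:RR recursion} holds \emph{surely}, for every realization of the permutation sequence---it is a deterministic inequality, not a conditional-expectation one. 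Dropping the negative terms and unrolling gives
\[ f(\vx^k)-\bar f \leq (f(\vx^0)-\bar f) \prod_{j=0}^{k-1}(1+2{\sf L}^3N^3\alpha_j^3) \leq (f(\vx^0)-\bar f)\exp\Bigl(2{\sf L}^3N^3{\sum}_{j=0}^\infty \alpha_j^3\Bigr) =: {\sf G}(\vx^0), \]
a genuinely deterministic constant depending only on $\vx^0$ and the step-size schedule. This is exactly \cite[Lemma 3.2]{LiMiQiu21}. Once you replace your martingale step by this observation, the interlocking obstacle you flag disappears and the rest of your argument goes through.
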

\begin{proof}
	Using $|\sum_{i=1}^N a_i|^q \leq N^{q-1} \sum_{i=1}^N|a_i|^q$, \eqref{eq:RR}, and $\nabla f(\vx^k) = \frac{1}{N}\sum_{i=1}^N \nabla f(\vx^k,\sigma_i^{k+1})$, we have
	\begin{align*} \|\vx^{k+1}&-\vx^k\|^q = \alpha_k^q \left \|{\sum}_{i=1}^{N} \nabla f(\tilde \vx_{i-1}^k,\sigma_i^{k+1})\right\|^q \\
		& \leq 2^{q-1}N^q\alpha_k^q \|\nabla f(\vx^k)\|^q + 2^{q-1}\alpha_k^q \left \|{\sum}_{i=1}^{N} [\nabla f(\tilde \vx_{i-1}^k,\sigma_i^{k+1}) - \nabla f(\vx^k,\sigma_i^{k+1})]\right\|^q \\ & \leq 2^{q-1}N^q\alpha_k^q \|\nabla f(\vx^k)\|^q + (2N)^{q-1}{\sf L}^q \alpha_k^q \cdot {\sum}_{i=1}^N \|\tilde \vx_{i-1}^k - \vx^k\|^q. \end{align*}
	Setting $V_k := {\sum}_{i=1}^N \|\tilde \vx_{i-1}^k - \vx^k\|^q$, we recursively obtain
	\begin{align*}
		&V_k  = \alpha_k^q {\sum}_{i=2}^N \left\|{\sum}_{j=1}^{i-1} \nabla f(\tilde \vx_{j-1}^k,\sigma_j^{k+1})\right\|^q \\ & \leq 2^{q-1}\alpha_k^q {\sum}_{i=2}^N \left[ \left\|{\sum}_{j=1}^{i-1} [\nabla f(\tilde \vx_{j-1}^k,\sigma_j^{k+1})-\nabla f(\vx^k,\sigma_{j}^{k+1})]\right\|^q + \left\|{\sum}_{j=1}^{i-1} \nabla f(\vx^k,\sigma_j^{k+1})\right\|^q \right] \\ & \leq 2^{q-1}\alpha_k^q {\sum}_{i=2}^N \left[ {\sf L}^q(i-1)^{q-1} {\sum}_{j=1}^{i-1} \|\tilde \vx_{j-1}^k-\vx^k\|^q + \left[(i-1) {\sum}_{j=1}^{i-1} \|\nabla f(\vx^k,\sigma_j^{k+1})\|^2 \right]^{\frac{q}{2}} \right] \\ & \leq 2^{q-1}{\sf L}^q \alpha_k^q \left({\sum}_{i=1}^{N-1} i^{q-1} \right) V_k + 2^{\frac{3q}{2}-1}{\sf L}^{\frac{q}{2}} \alpha_k^q {\sum}_{i=2}^N (i-1)^{\frac{q}{2}}\left[{\sum}_{j=1}^{i-1} (f(\vx^k,\sigma_j^{k+1})-\bar f)\right]^{\frac{q}{2}} \\ & \leq 2^{q-1}{\sf L}^q \left[\frac{N^q}{q}\right] \alpha_k^q V_k + 2^{\frac{3q-2}{2}}{({\sf L}N)}^{\frac{q}{2}} \left[\frac{2N^{\frac{q}{2}+1}}{q+2}\right] \alpha_k^q (f(\vx^k)-\bar f)^{\frac{q}{2}}
	\end{align*}
	where we applied the estimate $\|\nabla f(\vx,i)\|^2 \leq 2{\sf L}(f(\vx,i)-\bar f)$ (see also \eqref{eq:nabla-f-psi} for comparison). Clearly, this establishes $V_k = \mathcal O(\alpha_k^q (f(\vx^k)-\bar f)^{\frac{q}{2}})$. Furthermore, following the proof of \cite[Lemma 3.2]{LiMiQiu21}, we have $f(\vx^k)-\bar f \leq {\sf G}(\vx^0)$ where ${\sf G}(\vx^0) = (f(\vx^0)-\bar f)\exp(2{\sf L}^3N^3\sum_{j=0}^\infty\alpha_j^3)$. Hence, using $\|\nabla f(\vx)\|^2 \leq 2{\sf L}(f(\vx)-\bar f)$ and \ref{B2}, there exists a constant $\bar{\sf A}$ such that \eqref{eq:RR step length} holds for all sufficiently large $k$. 
\end{proof}

\subsection{Proof of Corollary 3.2}\label{appen:RR}
We derive the convergence of $\RR$ below by setting $\vPhi \equiv \nabla f$ and $\mu_k \equiv \alpha_k$.

\textbf{Phase I: Verifying  \ref{P1}--\ref{P2}.}  \ref{B1} verifies condition \ref{P1} with ${\sf L}_\Phi = {\sf L}$. 
Towards verifying \ref{P2}, note that under the above assumptions for $\RR$, \cite[Lemma 3.1]{LiMiQiu21} establishes the recursion:
\e\label{eq:RR recursion}
f(\vx^{k+1}) - \bar f \leq (1+2{\sf L}^3N^3\alpha_k^3) [f(\vx^k)-\bar f] - \frac{N\alpha_k}{2} \|\nabla f(\vx^k)\|^2 - \frac{1-{\sf L}N\alpha_k}{2N\alpha_k} \|\vx^{k+1}-\vx^k\|^2
\ee
for all $k$ as long as $\alpha_k < 1/(\sqrt{2}{\sf L}N)$. (This always holds for large enough $k$ as $\alpha_k \to 0$). Taking total expectation and applying \Cref{Theorem:martingale_convergece} provides $\Exp[f(\vx^k) - \bar f] \to {\sf F}$ and $\sum_{k=0}^\infty \alpha_k \Exp[\|\nabla f(\vx^k)\|^2] < \infty$.  This verifies condition \ref{P2} with $a=2$.

\textbf{Phase II: Verifying  \ref{P3}--\ref{P4} for showing expected convergence.}
We can infer from \Cref{lem:RR} that \ref{P3} holds with arbitrary  $q\geq 1$ and $q\in \N$, ${\sf A} = \bar {\sf A}{\sf G}(\vx^0)^{\frac{q}{2}}$, $p_1 = q$, and ${\sf B} = 0$. \ref{P4} can be easily verified by these parameter choices and \ref{B2}. Thus, applying \Cref{thm:convergence theorem} gives $\Exp[\|\nabla f(\vx^k)\|] \to 0$. 

\textbf{Phase III: Verifying  \ref{P3'}--\ref{P4'} for showing almost sure convergence.}
According to the update \eqref{eq:RR},  we can let $\mA_k \equiv 0$, $p_2 = 1$,  $\bB_k = \sum_{i=1}^{N} \nabla f(\tilde \vx_{i-1}^k,\sigma_i^{k+1})$, $q = 2$, and $b=2$,  in \ref{P3'}. We have 
\[
\limsup_{k \to \infty} \frac{\|\bB_k\|^2} {1+\| \nabla f(\vx^k) \|^2} \leq \limsup_{k \to \infty} \frac{ 2\|\bB_k- {N}\nabla f(\vx^k)\|^2 + 2{N^2}\|\nabla f(\vx^k) \|^2} {1+\| \nabla f(\vx^k) \|^2}  < \infty,
\]
since $\|\bB_k- {N}\nabla f(\vx^k)\|^2 \leq \calO(\alpha_k^2)$  --- as shown in \Cref{sec:app-RR-one} --- which converges to $0$ as $k\to \infty$. Condition \ref{P3'} is verified. Assumption \ref{B2} and the previous parameter choices verify condition \ref{P4'}. Hence, we can apply \Cref{thm:convergence theorem} to derive $\|\nabla f(\vx^k)\| \to 0$ almost surely. 

Summarizing the above results yields \Cref{thm:RR}.

\section{Results and proofs for Subsection 3.3}
\subsection{Weakly convex functions}\label{appen:wcvx function}
A function $h : \Rn \to \Rex$ is said to be $\tau$-weakly convex if 
\[
h + \frac{\tau}{2}\|\cdot\|^2
\]
is convex (on its effective domain $\dom h = \{\vx \in \R^n: h(\vx) < \infty\}$). The class of weakly convex functions covers many important nonsmooth nonconvex problems. For instance, any function that has the composite form 
\[
h(\vx) = r(c(\vx))
\]
 with $r$ being convex Lipschitz continuous and  the Jacobian of $c$ being Lipschitz continuous is weakly convex. We refer to \cite{DavDru19} for more discussions.

If $h$ is $\tau$-weakly convex, proper, and lower semicontinuous, then the proximity operator $\prox_{\alpha h} : \Rn \to \Rn$, given by 
\[
\prox_{\alpha h}(\vx) := \argmin_{\vy \in \Rn}\,h(\vy) + \frac{1}{2\alpha}\|\vx-\vy\|^2,
\]
is a well-defined function for all $\alpha \in (0,\tau^{-1})$ and Lipschitz continuous with constant $(1-\alpha\tau)^{-1}$, see, e.g., \cite[Proposition 12.19]{RocWet98} or \cite[Proposition 3.3]{HohLabObe20}. The $\tau$-weak convexity is equivalent to 
\e \label{eq:def-weak-weak}
    h(\vy)\geq h(\vx)+\langle\vs,\vy-\vx\rangle-\frac{\tau}{2}\|\vx-\vy\|^2, \quad \forall~\vx, \vy, \  \ \forall~\vs\in \partial h(\vx)
\ee
see, e.g., \cite{Via83,DavDru19}. 

\subsection{Examples of weakly convex and Lipschitz continuous regularizers} \label{sec:some-examples}
In this section, we discuss several common nonconvex regularizers that can be shown to be weakly convex and Lipschitz continuous. The functions presented here have been mentioned in \cref{rem:examples}.

The minimax concave penalty (MCP), introduced in \cite{Zha10}, is the parametrized function $\varphi_{\lambda,\theta} : \R \to \R_+$ given by
\[ \varphi_{\lambda,\theta}(x) := \begin{cases} \lambda|x|-\frac{x^2}{2\theta} & \text{if } |x| \leq \theta\lambda, \\ \frac{\theta\lambda^2}{2} & \text{otherwise}, \end{cases} \]
where $\lambda, \theta > 0$ are two positive parameters. This function is $\theta^{-1}$-weakly convex and smooth for $x \neq 0$. Discussing the subdifferential $\partial \varphi_{\lambda,\theta}(x)$ and using \cite[Theorem 9.13]{RocWet98}, it can be shown that $\varphi_{\lambda,\theta}$ is Lipschitz continuous on $\R$ with modulus $\lambda$.

The smoothly clipped absolute deviation (SCAD) \cite{Fan97} is defined by
\[ \varphi_{\lambda,\theta}(x) := \begin{cases} \lambda|x|& \text{if } |x| \leq \lambda, \\ \frac{-x^2+2\theta\lambda|x|-\lambda^2}{2(\theta-1)} & \text{if } \lambda < |x| \leq \theta\lambda, \\ \frac{(\theta+1)\lambda^2}{2} & \text{if } |x| > \theta\lambda, \end{cases} \]
where $\lambda > 0$ and $\theta > 2$ are given parameters. The SCAD function is $(\theta-1)^{-1}$-weakly convex and Lipschitz continuous with modulus $\lambda$.

The student-t loss function is given by $\varphi_\theta(x) := \frac{\theta^2}{2}\log(1+\theta^{-2}x^2)$ for some $\theta \neq 0$. The first- and second-order derivative of $\varphi_\theta$ can be calculated as follows: 
\[ \varphi_\theta^\prime(x) = \frac{\theta^2x}{\theta^2+x^2} \quad \text{and} \quad \varphi_\theta^{\prime\prime}(x) = \frac{\theta^2(\theta^2-x^2)}{(\theta^2+x^2)^2}. \]
Some simple computations then show that $\varphi_\theta$ is $\frac18$-weakly convex and Lipschitz continuous with modulus ${|\theta|}/{2}$. Additional examples can be found in \cite{BoeWri21}.

\subsection{Equivalent stationarity measures}

We now first show that the two stationarity measures $\vx \mapsto \|F_{\mathrm{nat}}^1(\vx)\|$ and $\vx \mapsto \|\nabla \env_{\theta\psi}(\vx)\|$ are equivalent. This can be used to verify \cref{thm:prox-sgd}.

\begin{lemma} \label{lem:env-to-nat} Suppose that the conditions \ref{C1} and \ref{C2} are satisfied and let $\theta \in (0,[3{\sf L}+\tau]^{-1})$ and $\vx \in \Rn$ be given. Then, we have
	\[ (1-[3{\sf L}+\tau]\theta)\gamma\theta^{-2} \|F_{\mathrm{nat}}(\vx)\| \leq \|\nabla\env_{\theta\psi}(\vx)\| \leq {(1+[{\sf L}-\tau]\theta)}(\gamma+\tau){\theta^{-2}} \|F_{\mathrm{nat}}(\vx)\|, \]  
	where $\gamma = \theta/(1-[{\sf L}+\tau]\theta)$ and $F_{\mathrm{nat}} := F_{\mathrm{nat}}^1$.
\end{lemma}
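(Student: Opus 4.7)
The plan is to compare the two natural one-step proximal objects whose norms measure stationarity and exploit the $\tau$-hypomonotonicity of $\partial\varphi$ (which is equivalent to the $\tau$-weak convexity of $\varphi$).

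First, I would introduce the two key points: $\vz := \prox_{\theta\psi}(\vx)$, so that $\nabla\env_{\theta\psi}(\vx) = \theta^{-1}(\vx - \vz)$, and $\vy := \prox_{\varphi}(\vx - \nabla f(\vx))$, so that $F_{\mathrm{nat}}(\vx) = \vx - \vy$. Writing down the first-order optimality condition for each minimization problem yields
\[ \vs_{\vz} := \tfrac{1}{\theta}(\vx - \vz) - \nabla f(\vz) \in \partial\varphi(\vz), \qquad \vs_{\vy} := \vx - \vy - \nabla f(\vx) \in \partial\varphi(\vy). \]
These two inclusions are the starting material for all subsequent estimates.

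Next, I would invoke the $\tau$-hypomonotonicity of $\partial\varphi$, which gives $\langle \vs_\vz - \vs_\vy, \vz - \vy\rangle \geq -\tau\|\vz - \vy\|^2$. Substituting the explicit forms of $\vs_\vz,\vs_\vy$, using the identity $\langle \vx - \vz, \vz - \vy\rangle = \langle \vx - \vy, \vz - \vy\rangle - \|\vz - \vy\|^2$, and applying Cauchy--Schwarz together with the ${\sf L}$-Lipschitz continuity of $\nabla f$ (to handle the cross term $\langle \nabla f(\vx) - \nabla f(\vz), \vz - \vy\rangle$), I would obtain a scalar inequality of the schematic form
\[ \left(\tfrac{1}{\theta} - \tau\right)\|\vz - \vy\|^2 \leq \tfrac{|1-\theta|}{\theta}\|\vx - \vy\|\,\|\vz - \vy\| + {\sf L}\|\vx - \vz\|\,\|\vz - \vy\|. \]
Dividing by $\|\vz - \vy\|$ (handling the degenerate case separately) produces a linear inequality that couples the three distances $\|\vx - \vy\|$, $\|\vx - \vz\|$, and $\|\vz - \vy\|$.

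From this one coupled bound, I would extract both directions of the stated equivalence by applying the triangle inequality: using $\|\vx - \vz\| \leq \|\vx - \vy\| + \|\vy - \vz\|$ and isolating $\|\vx - \vz\|$ gives an upper bound $\|\vx - \vz\| \leq c_+\|\vx - \vy\|$; symmetrically, $\|\vx - \vy\| \leq \|\vx - \vz\| + \|\vz - \vy\|$ gives a lower bound. Multiplying both by $\theta^{-1}$ and grouping the prefactors through $\gamma = \theta/(1-[{\sf L}+\tau]\theta)$ should deliver the two inequalities $(1-[3{\sf L}+\tau]\theta)\gamma\theta^{-2}\|F_{\mathrm{nat}}(\vx)\| \leq \|\nabla\env_{\theta\psi}(\vx)\| \leq (1+[{\sf L}-\tau]\theta)(\gamma+\tau)\theta^{-2}\|F_{\mathrm{nat}}(\vx)\|$.

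The hardest part will be matching the \emph{precise} closed-form constants displayed in the lemma rather than some equivalent ones. The factor $3{\sf L}+\tau$ (as opposed to ${\sf L}+\tau$) in the lower bound suggests that two or three separate absorptions of an ${\sf L}\theta$-sized term via Young's inequality are involved, and the appearance of $\gamma+\tau$ in the upper bound hints that one should at some point rewrite $\frac{1}{\theta}-\tau = \frac{1-\tau\theta}{\theta}$ and combine it with the Lipschitz constant $(1-\tau\theta)^{-1}$ of $\prox_{\theta\varphi}$. It may therefore be cleaner to insert an auxiliary intermediate point, namely the one-step prox-gradient iterate $\prox_{\theta\varphi}(\vx - \theta\nabla f(\vx))$, and compare it separately to both $\vz$ (via Lipschitz continuity of $\prox_{\theta\varphi}$ composed with $\nabla f$) and to $\vy$ (via a resolvent-type computation bridging step sizes $\theta$ and $1$); the range restriction $\theta \in (0,[3{\sf L}+\tau]^{-1})$ is exactly what is needed to keep the resulting lower-bound coefficient strictly positive.
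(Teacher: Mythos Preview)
Your approach is \emph{different} from the paper's and, while the geometric ingredients you identify (the two optimality inclusions, $\tau$-hypomonotonicity of $\partial\varphi$, ${\sf L}$-Lipschitzness of $\nabla f$, triangle inequalities) are exactly the right ones, the paper does not carry out a direct three-point comparison between $\vz=\prox_{\theta\psi}(\vx)$, $\vy=\prox_{\varphi}(\vx-\nabla f(\vx))$, and $\vx$. Instead it proceeds in two modular steps that outsource the hard work to existing results: first it ``convexifies'' by writing $\prox_{\theta\psi}(\vx)=\prox_{\gamma\psi_{\vx}}(\vx)$ with $\psi_{\vx}=\psi+\tfrac{{\sf L}+\tau}{2}\|\cdot-\vx\|^2$ and $\gamma=\theta/(1-[{\sf L}+\tau]\theta)$, and then invokes \cite[Theorem~3.5]{DruLew18} to sandwich $\|\nabla\env_{\theta\psi}(\vx)\|$ between $(1\pm 2{\sf L}\gamma)\theta^{-1}\|\vx-\prox_{\gamma\varphi_{\vx}}(\vx-\gamma\nabla f(\vx))\|$; second, after rewriting this prox as $F_{\mathrm{nat}}^{\gamma/(\gamma+\tau)}(\vx)$, it applies the step-size monotonicity of the natural residual from \cite[Lemma~2]{Nes13} to pass from parameter $\gamma/(\gamma+\tau)$ to parameter $1$. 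The specific constants $1-[3{\sf L}+\tau]\theta=(1-2{\sf L}\gamma)\theta\gamma^{-1}$ and $(1+[{\sf L}-\tau]\theta)(\gamma+\tau)\theta^{-2}$ fall out mechanically from these two citations, which explains why you were struggling to reverse-engineer them from a bare hypomonotonicity inequality.

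Your direct route would presumably yield an equivalence with \emph{some} constants, and your instinct to introduce the intermediate point $\prox_{\theta\varphi}(\vx-\theta\nabla f(\vx))$ is on target --- that is precisely what the Drusvyatskiy--Lewis theorem is doing internally. But to match the displayed prefactors you would effectively have to reprove both cited results inline; the paper's approach buys the exact constants for free at the cost of two black-box citations.
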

\begin{proof} For every fixed $\vx \in \Rn$, let us define $\psi_{\vx}(\vy) := \psi(\vy) + \frac{{\sf L}+\tau}{2}\|\vx-\vy\|^2$ and $\varphi_{\vx}(\vy) := \varphi(\vy) + \frac{\tau}{2}\|\vx-\vy\|^2$. Then, setting $\gamma = \theta/(1-[{\sf L}+\tau]\theta)$, we may write
	\[ \prox_{\theta\psi}(\vx) = \prox_{\gamma\psi_{\vx}}(\vx) \quad \text{and} \quad \nabla \env_{\theta\psi}(\vx) = \theta^{-1}\gamma\nabla\env_{\gamma\psi_{\vx}}(\vx). \]
	Applying \cite[Theorem 3.5]{DruLew18} with $G \equiv \partial\varphi_{\vx}$, $\Phi \equiv \partial \psi_{\vx}$, $F \equiv \nabla f + {\sf L}(\cdot - x)$, $t \equiv \gamma$, and $\beta \equiv 2{\sf L}$, we can establish the following estimates for all $\vx \in \Rn$: 
	\[ {(1-2{\sf L}\gamma)\theta^{-1}} \|\vx-\prox_{\gamma\varphi_{\vx}}(\vx-\gamma F(\vx))\| \leq \|\nabla\env_{\theta\psi}(\vx)\|  \]  
and $\|\nabla\env_{\theta\psi}(\vx)\| \leq {(1+2{\sf L}\gamma)\theta^{-1}} \|\vx-\prox_{\gamma\varphi_{\vx}}(\vx-\gamma F(\vx))\|$. Moreover, it holds that
\begin{align*} \prox_{\gamma\varphi_{\vx}}(\vx-\gamma F(\vx)) & = \prox_{\gamma\varphi_{\vx}}(\vx-\gamma \nabla f(\vx)) \\
& = \argmin_{\vy \in \Rn}~\varphi(\vy) + \iprod{\nabla f(\vx)}{\vy-\vx} + \frac12 \left[\frac{1}{\gamma} + \tau \right] \|\vy - \vx\|^2 \\ & = \prox_{\frac{\gamma}{\gamma+\tau}\varphi}(\vx - \tfrac{\gamma}{\gamma+\tau}\nabla f(\vx)) \end{align*}
In addition, as shown in \cite[Lemma 2]{Nes13}, the functions $\delta \mapsto \|F^{1/\delta}_{\mathrm{nat}}(\vx)\|$ and $\delta \mapsto \|F^{1/\delta}_{\mathrm{nat}}(\vx)\|/\delta$ are decreasing and increasing in $\delta$, respectively. Hence, we can infer
\e \label{eq:so-many-lambdas} \min\{1,\lambda_2/\lambda_1\} \|F^{\lambda_2}_{\mathrm{nat}}(\vx)\| \leq \|F^{\lambda_1}_{\mathrm{nat}}(\vx)\| \leq \max\{1,\lambda_2/\lambda_1\} \|F^{\lambda_2}_{\mathrm{nat}}(\vx)\| \ee
for all $\lambda_1,\lambda_2 > 0$ and $\vx \in \Rn$. Choosing $\lambda_1 = {\gamma}/{(\gamma+\tau)}$ and $\lambda_2 = 1$ and noticing $1-2{\sf L}\gamma = (1-[3{\sf L}+\tau]\theta)\theta^{-1} > 0$, we can conclude the proof of \cref{lem:env-to-nat}.
\end{proof} 

\subsection{Proof of Lemma 3.4} \label{app:env}
Let us again note that the $\tau$-weak convexity of $\varphi$ --- as stated in \ref{C2} --- implies Lipschitz continuity of the proximity operator $\prox_{\theta\varphi}$ with modulus $(1-\theta\tau)^{-1}$. In the following results, we first bound the term $\|\vx^{k+1}-\bar \vx^k\|^2 = \|\vx^{k+1}-\prox_{\theta\psi}(\vx^k)\|^2$.

\begin{lemma} \label{lem:psi-to-phi} Suppose that \ref{C1} and \ref{C2} are satisfied and let $\theta \in (0,[{\sf L}+\tau]^{-1})$, $\alpha > 0$, $\vx \in \Rn$, and $\bar \vx = \prox_{\theta\psi}(\vx)$ be given. Then, it holds that
	\[ \bar \vx = \prox_{\alpha\varphi}(\bar \vx - \alpha \nabla f(\bar\vx) - \alpha\theta^{-1}[\bar \vx - \vx]). \]
\end{lemma}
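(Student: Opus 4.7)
My plan is to exploit the first-order optimality conditions associated with both proximal subproblems and show that they encode the same inclusion in the Fréchet subdifferential $\partial\varphi(\bar\vx)$. Uniqueness of the proximal point will then force the two minimizers to coincide.

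First I would observe that under \ref{C1} and \ref{C2}, the composite function $\psi = f + \varphi$ is $({\sf L}+\tau)$-weakly convex, so for the prescribed range $\theta \in (0,[{\sf L}+\tau]^{-1})$ the mapping $\vy \mapsto \psi(\vy) + \frac{1}{2\theta}\|\vx-\vy\|^2$ is strongly convex. Hence $\bar \vx = \prox_{\theta\psi}(\vx)$ is well defined and is the unique minimizer of this subproblem, so the first-order optimality condition is both necessary and sufficient. Using the sum rule for the Fréchet subdifferential (which applies since $f$ is continuously differentiable and $\|\vx - \cdot\|^2$ is smooth), I would extract
\[ 0 \in \nabla f(\bar\vx) + \partial\varphi(\bar\vx) + \theta^{-1}(\bar\vx - \vx), \]
or equivalently, $-\nabla f(\bar\vx) - \theta^{-1}(\bar\vx-\vx) \in \partial\varphi(\bar\vx)$.

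Next, I would show that $\bar\vx$ satisfies the first-order optimality condition for the right-hand proximal subproblem. Setting $\vw := \bar\vx - \alpha\nabla f(\bar\vx) - \alpha\theta^{-1}(\bar\vx - \vx)$, a direct computation yields
\[ \alpha^{-1}(\bar\vx - \vw) = \nabla f(\bar\vx) + \theta^{-1}(\bar\vx-\vx), \]
so the inclusion $0 \in \partial\varphi(\bar\vx) + \alpha^{-1}(\bar\vx - \vw)$ reduces precisely to the one established in the previous step.

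Finally, I would conclude by invoking uniqueness: for $\alpha \in (0,\tau^{-1})$ the mapping $\vy \mapsto \varphi(\vy) + \frac{1}{2\alpha}\|\vw-\vy\|^2$ is strongly convex, so its unique minimizer is fully characterized by the first-order condition, giving $\bar\vx = \prox_{\alpha\varphi}(\vw)$. I do not expect any genuine obstacle here. The only subtlety is the implicit assumption $\alpha < \tau^{-1}$ needed for $\prox_{\alpha\varphi}$ to be a well-defined singleton; this is consistent with the intended use of the lemma in controlling $\|\vx^{k+1}-\bar\vx^k\|$, where $\alpha$ is a sufficiently small step size.
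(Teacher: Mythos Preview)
Your argument is correct and is precisely the standard proof of this fixed-point identity via first-order optimality conditions. The paper itself does not give a proof but simply cites \cite[Lemma 3.2]{DavDru19}; your derivation is essentially what one finds there, including the observation that the stated hypothesis $\alpha > 0$ should really be read as $\alpha \in (0,\tau^{-1})$ so that $\prox_{\alpha\varphi}$ is single-valued.
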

\begin{proof} This is Lemma 3.2 in \cite{DavDru19}. 
\end{proof}

Our second result is analogous to Lemma 3.3 in \cite{DavDru19}. 

\begin{lemma} \label{lem:a-small-lemma} Suppose that the conditions \ref{C1}, \ref{C2}, and \ref{C4} are satisfied and let $\theta \in (0,[{\sf L}+\tau]^{-1})$ be given. Defining $\bar \vx^k = \prox_{\theta\psi}(\vx^k)$, $k \in \N$, it holds that
	\[ \Exp[\|\vx^{k+1} - \bar\vx^k\|^2 \mid \mathcal F_k] \leq (1-\alpha_k\theta_k)^2 \|\vx^k-\bar\vx^k\|^2 + \frac{\alpha_k^2\sigma_k^2}{(1-\alpha_k\tau)^2} \quad \text{almost surely} \quad \forall~k \in \N, \]
	where $\theta_k := (\theta^{-1}-[{\sf L}+\tau])/(1 - \alpha_k\tau)$ and $\sigma_k^2 := \Exp[\|\vg^k-\nabla f(\vx^k)\|^2 \mid \mathcal F_{k}]$. 
\end{lemma}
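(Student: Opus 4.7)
The plan is to compare the two proximal fixed-point expressions available for $\vx^{k+1}$ and $\bar\vx^k$ and then exploit the Lipschitz continuity of both $\prox_{\alpha_k\varphi}$ and $\nabla f$. By the $\PSGD$ update, $\vx^{k+1} = \prox_{\alpha_k\varphi}(\vx^k - \alpha_k \vg^k)$, and applying \cref{lem:psi-to-phi} with $\alpha = \alpha_k$, $\vx = \vx^k$ yields
\[
\bar\vx^k = \prox_{\alpha_k\varphi}\bigl(\bar\vx^k - \alpha_k\nabla f(\bar\vx^k) - \alpha_k\theta^{-1}(\bar\vx^k - \vx^k)\bigr).
\]
Since \ref{C2} makes $\prox_{\alpha_k\varphi}$ a $(1-\alpha_k\tau)^{-1}$-Lipschitz mapping for $\alpha_k\tau < 1$, subtracting these two representations and squaring gives
\[
\|\vx^{k+1} - \bar\vx^k\|^2 \;\leq\; \frac{\bigl\|(1-\alpha_k\theta^{-1})(\vx^k-\bar\vx^k) - \alpha_k(\vg^k - \nabla f(\bar\vx^k))\bigr\|^2}{(1-\alpha_k\tau)^2}.
\]

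Next, I would split $\vg^k - \nabla f(\bar\vx^k) = (\vg^k - \nabla f(\vx^k)) + (\nabla f(\vx^k) - \nabla f(\bar\vx^k))$ and take $\Exp[\cdot\mid\mathcal F_k]$ on both sides. By \ref{C4}, $\Exp[\vg^k - \nabla f(\vx^k)\mid\mathcal F_k] = 0$ almost surely with conditional second moment $\sigma_k^2$; since both $\vx^k$ and $\bar\vx^k$ are $\mathcal F_k$-measurable, the cross term vanishes and the stochastic contribution collapses to $\alpha_k^2\sigma_k^2$. This leaves the deterministic quantity
\[
\bigl\|(1-\alpha_k\theta^{-1})(\vx^k-\bar\vx^k) - \alpha_k(\nabla f(\vx^k) - \nabla f(\bar\vx^k))\bigr\|^2,
\]
which I would control via the triangle inequality together with the ${\sf L}$-Lipschitz continuity of $\nabla f$ from \ref{C1}, obtaining an upper bound of $(1-\alpha_k\theta^{-1} + \alpha_k{\sf L})^2\|\vx^k-\bar\vx^k\|^2$ whenever $\alpha_k \leq \theta$ (which is automatic for all $k$ sufficiently large by \ref{C5}).

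The final step is purely arithmetic: a direct computation from the definition $\theta_k = (\theta^{-1}-[{\sf L}+\tau])/(1-\alpha_k\tau)$ gives the identity $(1-\alpha_k\theta_k)(1-\alpha_k\tau) = 1 - \alpha_k\theta^{-1} + \alpha_k{\sf L}$. Dividing the deterministic estimate through by $(1-\alpha_k\tau)^2$ then matches the leading coefficient $(1-\alpha_k\theta_k)^2$ exactly, while the stochastic error is already in the desired form $\alpha_k^2\sigma_k^2/(1-\alpha_k\tau)^2$, completing the proof.

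The main technical subtlety I anticipate lies in controlling the sign of $1-\alpha_k\theta^{-1}$ inside the triangle inequality: without an implicit bound $\alpha_k \leq \theta$, the deterministic estimate only produces $(|1-\alpha_k\theta^{-1}|+\alpha_k{\sf L})^2$, which does not line up cleanly with $(1-\alpha_k\theta_k)^2$. An alternative route would be to expand the square directly and use $|\langle \vx^k-\bar\vx^k, \nabla f(\vx^k)-\nabla f(\bar\vx^k)\rangle| \leq {\sf L}\|\vx^k-\bar\vx^k\|^2$, which produces the same bound under the same step-size restriction. Either way, the restriction is harmless because $\alpha_k \to 0$ is already assumed for the subsequent convergence argument.
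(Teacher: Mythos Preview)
Your proposal is correct and follows essentially the same route as the paper: invoke \cref{lem:psi-to-phi}, apply the $(1-\alpha_k\tau)^{-1}$-Lipschitz bound for $\prox_{\alpha_k\varphi}$, split $\vg^k-\nabla f(\bar\vx^k)$ into its zero-mean and deterministic parts, and control the deterministic piece via the ${\sf L}$-Lipschitz bound on $\nabla f$ to obtain the factor $(1-\alpha_k[\theta^{-1}-{\sf L}])^2/(1-\alpha_k\tau)^2 = (1-\alpha_k\theta_k)^2$. Your observation about the sign of $1-\alpha_k\theta^{-1}$ is well taken---the paper's expansion also tacitly uses $\alpha_k \leq \theta$ at the same step, and, as you note, this is harmless since the lemma is only applied downstream for $k$ large.
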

\begin{proof}
	%
	%
	Invoking \cref{lem:psi-to-phi} and the Lipschitz continuity of $\prox_{\alpha_k\varphi}$ and $\nabla f$, it follows
	\begingroup
	\allowdisplaybreaks
	\begin{align*} 
		(1-\alpha_k\tau)^2\|\vx^{k+1}-\bar\vx^k\|^2 & \\ & \hspace{-23ex} = (1-\alpha_k\tau)^2 \|\prox_{\alpha_k\varphi}(\vx^k - \alpha_k \vg^k) - \prox_{\alpha_k\varphi}(\bar\vx^k - \alpha_k \nabla f(\bar\vx^k)-\alpha_k\theta^{-1}[\bar\vx^k-\vx^k])\|^2 \\ & \hspace{-19ex} \leq \|(1-\alpha_k\theta^{-1})[\bar\vx^k-\vx^k] + \alpha_k[\vg^k-\nabla f(\bar\vx^k)]\|^2 \\ & \hspace{-23ex} = (1-\alpha_k\theta^{-1})^2 \|\bar\vx^k-\vx^k\|^2 + 2\alpha_k(1-\alpha_k\theta^{-1}) \iprod{\bar\vx^k-\vx^k}{\vg^k-\nabla f(\bar\vx^k)} \\ & \hspace{-19ex} + \alpha_k^2 [ \|\vg^k-\nabla f(\vx^k)\|^2 + 2\iprod{\vg^k-\nabla f(\vx^k)}{\nabla f(\vx^k)-\nabla f(\bar\vx^k)} + \|\nabla f(\vx^k)-\nabla f(\bar\vx^k)\|^2] \\ & \hspace{-23ex} \leq (1-\alpha_k[\theta^{-1}-{\sf L}])^2 \|\bar\vx^k-\vx^k\|^2 + 2\alpha_k(1-\alpha_k\theta^{-1}) \iprod{\bar\vx^k-\vx^k}{\vg^k-\nabla f(\vx^k)} \\ & \hspace{-19ex}  + 2\alpha_k^2 \iprod{\vg^k-\nabla f(\vx^k)}{\nabla f(\vx^k)-\nabla f(\bar\vx^k)} + \alpha_k^2 \|\vg^k-\nabla f(\vx^k)\|^2. 
	\end{align*}
	\endgroup
	Taking conditional expectation, using \ref{C4} and $\vx^k,\bar\vx^k \in \mathcal F_k$, we obtain
	\[ \Exp[\|\vx^{k+1} - \bar\vx^k\|^2 \mid \mathcal F_k] \leq (1-\alpha_k\theta_k)^2 \|\vx^k-\bar\vx^k\|^2 + \frac{\alpha_k^2\sigma_k^2}{(1-\alpha_k\tau)^2}  \]
	almost surely, where $\theta_k = (\theta^{-1}-[{\sf L}+\tau])/(1-\alpha_k\tau)$.
\end{proof}

By \ref{C4}, the stochastic error term $\sigma_k^2$ is bounded by ${\sf C}[f(\vx^k)-\bar f] + {\sf D}$ almost surely. In order to proceed, we need to link the function values ``$f(\vx^k)-\bar f$'' and ``$\env_{\theta\psi}(\vx^k)-\bar \psi$'' where $\bar\psi = \bar f+\bar\varphi$. The following lemma precisely establishes such a connection under assumption \ref{C3}.

\begin{lemma} \label{lem:psi-env} Suppose that the conditions \ref{C1}--\ref{C3} are satisfied and let $\theta \in (0,[\frac43{\sf L}+\tau]^{-1})$ be given. Then, it holds that
	\[ f(\vx) - \bar f \leq 2 [\env_{\theta\psi}(\vx)-\bar\psi] + {\sf L}_\varphi^2\theta \quad \forall~\vx \in \dom\varphi.\]
\end{lemma}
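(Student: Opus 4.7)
The plan is to set $\bar\vx := \prox_{\theta\psi}(\vx)$, which is well-defined since $\psi = f + \varphi$ is $({\sf L}+\tau)$-weakly convex by \ref{C1}--\ref{C2} and the hypothesis forces $\theta < [\tfrac{4}{3}{\sf L}+\tau]^{-1} < ({\sf L}+\tau)^{-1}$. Introducing the three nonnegative quantities $A := f(\bar\vx)-\bar f$, $B := \varphi(\bar\vx)-\bar\varphi$, and $C := \tfrac{1}{2\theta}\|\vx-\bar\vx\|^2$, the definition of the Moreau envelope gives the exact decomposition $\env_{\theta\psi}(\vx) - \bar\psi = A + B + C$. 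In particular, the \emph{combined} inequality $A + C \leq \env_{\theta\psi}(\vx)-\bar\psi$ is available, and preserving this combined bound (instead of using $A,C \leq \env_{\theta\psi}(\vx)-\bar\psi$ separately) will be the mechanism for controlling the final constant.

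Next I would split $f(\vx)-\bar f = A + [f(\vx)-f(\bar\vx)]$ and estimate the second piece via the descent lemma from \ref{C1}, obtaining $f(\vx)-f(\bar\vx) \leq \langle\nabla f(\bar\vx),\vx-\bar\vx\rangle + \tfrac{\sf L}{2}\|\vx-\bar\vx\|^2$. The central input is the smoothness estimate $\|\nabla f(\bar\vx)\|^2 \leq 2{\sf L}(f(\bar\vx)-\bar f) = 2{\sf L} A$, which follows by applying the descent lemma at $\bar\vx$ along $-{\sf L}^{-1}\nabla f(\bar\vx)$ and invoking $f \geq \bar f$ from \ref{C1}. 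Young's inequality in the scaled form $ab \leq \tfrac{\theta}{2}a^2 + \tfrac{1}{2\theta}b^2$, applied to $\|\nabla f(\bar\vx)\|\cdot\|\vx-\bar\vx\|$, then yields $\langle\nabla f(\bar\vx),\vx-\bar\vx\rangle \leq {\sf L}\theta A + C$. Combined with $\tfrac{\sf L}{2}\|\vx-\bar\vx\|^2 = {\sf L}\theta C$, this produces $f(\vx)-f(\bar\vx) \leq {\sf L}\theta A + (1+{\sf L}\theta)C$, and therefore $f(\vx)-\bar f \leq (1+{\sf L}\theta)(A+C)$. Substituting $A+C \leq \env_{\theta\psi}(\vx)-\bar\psi$ and noting that the hypothesis implies ${\sf L}\theta \leq \tfrac{\sf L}{4{\sf L}/3+\tau} \leq 3/4 < 1$, we conclude
\[ f(\vx)-\bar f \,\leq\, (1+{\sf L}\theta)(\env_{\theta\psi}(\vx)-\bar\psi) \,\leq\, 2(\env_{\theta\psi}(\vx)-\bar\psi) \,\leq\, 2(\env_{\theta\psi}(\vx)-\bar\psi) + {\sf L}_\varphi^2\theta, \]
where the last step is free since ${\sf L}_\varphi^2\theta \geq 0$.

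The main obstacle is the calibration of Young's inequality: a naive choice such as $\gamma = {\sf L}$ produces unequal coefficients $(2,\,2{\sf L}\theta)$ on $A$ and $C$, and bounding $A,C$ separately would deliver a prefactor $2+2{\sf L}\theta$ exceeding $2$. The scaling $\theta$ chosen above is tailored so that the coefficients on $A$ and $C$ coincide (both equal $1+{\sf L}\theta$), which is precisely what enables the use of the combined bound $A+C \leq \env_{\theta\psi}(\vx)-\bar\psi$. Interestingly, this route does not invoke \ref{C3} at all; an alternative argument based on the KKT identity $-\nabla f(\bar\vx) - \theta^{-1}(\bar\vx-\vx) \in \partial\varphi(\bar\vx)$ combined with the bound $\|\vs\| \leq {\sf L}_\varphi$ for $\vs \in \partial\varphi(\bar\vx)$ (where \ref{C3} and \cref{sec:some-examples} would enter) would generate the ${\sf L}_\varphi^2\theta$ slack naturally via Young's on ${\sf L}_\varphi\|\vx-\bar\vx\|$, but typically at the cost of a larger prefactor in front of $\env_{\theta\psi}(\vx)-\bar\psi$; the looser form of the stated conclusion accommodates either strategy.
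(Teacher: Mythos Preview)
Your proof is correct and takes a genuinely different route from the paper. The paper first bounds the larger quantity $\psi(\vx)-\bar\psi$ (which dominates $f(\vx)-\bar f$ since $\varphi \geq \bar\varphi$), and in doing so must control the cross-term $\varphi(\vx)-\varphi(\bar\vx)$ via the Lipschitz assumption \ref{C3}; this is precisely where the additive slack ${\sf L}_\varphi^2\theta$ enters. You instead bound $f(\vx)-\bar f$ directly: by dropping the nonnegative piece $B = \varphi(\bar\vx)-\bar\varphi$ from the exact decomposition $\env_{\theta\psi}(\vx)-\bar\psi = A+B+C$, you never compare $\varphi$-values at different points, so \ref{C3} is not needed and the ${\sf L}_\varphi^2\theta$ term is vacuous. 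In fact your argument yields the strictly sharper estimate $f(\vx)-\bar f \leq (1+{\sf L}\theta)\,[\env_{\theta\psi}(\vx)-\bar\psi]$ with $1+{\sf L}\theta < 2$. The paper's approach, on the other hand, delivers as a byproduct the intermediate bound $\psi(\vx)-\bar\psi \leq 2[\env_{\theta\psi}(\vx)-\bar\psi] + {\sf L}_\varphi^2\theta$, which controls the full objective $\psi$ rather than only $f$ and may be of independent use.
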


\begin{proof} Notice that the Lipschitz continuity of $\nabla f$ and assumption \ref{C2} imply $f(\vx - {\sf L}^{-1}\nabla f(\vx)) - f(\vx) \leq - \frac{1}{2{\sf L}} \|\nabla f(\vx)\|^2$ and
	\e \label{eq:nabla-f-psi} \|\nabla f(\vx)\|^2 \leq 2{\sf L}[f(\vx) - \bar f] = 2{\sf L}[\psi(\vx) - \varphi(\vx) - \bar f] \leq 2{\sf L}[\psi(\vx) - \bar \psi] \ee
	for all $\vx \in \dom\varphi$. Using \ref{C3}, the Lipschitz continuity of $\nabla f$, Young's inequality (twice), and $\theta < \frac34{\sf L}^{-1}$ and setting $\bar \vx = \prox_{\theta\psi}(\vx) \in \dom\varphi$, this yields
	\begingroup
	\allowdisplaybreaks
	\begin{align*}
		\psi(\vx) - \bar \psi & \leq \env_{\theta\psi}(\vx) - \bar\psi + \iprod{\nabla f(\bar \vx)}{\vx-\bar \vx} + \frac{1}{2}\left[{\sf L}-{\theta}^{-1}\right] \|\vx-\bar\vx\|^2 + \varphi(\vx)-\varphi(\bar\vx) \\ & \leq \env_{\theta\psi}(\vx) - \bar\psi + \frac{1}{2{\sf L}} \|\nabla f(\bar\vx)\|^2 + \left[ {\sf L} - \frac{1}{2\theta} \right] \|\vx-\bar\vx\|^2 + \frac{1}{4\theta} \|\vx-\bar\vx\|^2 + {\sf L}_{\varphi}^2\theta \\ & \leq  \env_{\theta\psi}(\vx) - \bar\psi  + [\psi(\bar\vx)-\bar \psi] + \frac{1}{2\theta} \|\vx - \bar \vx\|^2 + {\sf L}_\varphi^2\theta = \ 2[\env_{\theta\psi}(\vx) - \bar\psi] + {\sf L}_\varphi^2\theta. \end{align*}
	\endgroup
	This finishes the proof of \cref{lem:psi-env}. \end{proof} 

We now verify \cref{lem:psgd-1}. Choosing $\theta \in (0,[3{\sf L}+\tau]^{-1})$ ensures that \cref{lem:env-to-nat} and \cref{lem:psi-env} are applicable. Let $k \in \N$ be given with $\alpha_k \leq \min\{\frac{1}{2\tau},\frac{1}{2(\theta^{-1}-[{\sf L}+\tau])}\}$. This implies $1-\alpha_k\tau > \frac12$, 
%
\[ 2{\sf L} \leq \theta^{-1}-[{\sf L}+\tau] \leq \theta_k \leq 2(\theta^{-1}-[{\sf L}+\tau]), \]
and $2-\theta_k\alpha_k \geq 1$. Utilizing the definition of the Moreau envelope, we can further infer
\begingroup
\allowdisplaybreaks
\begin{align} \nonumber \env_{\theta\psi}(\vx^{k+1}) & = \min_{\vy \in \Rn}~\psi(\vy) + \frac{1}{2\theta} \|\vy - \vx^{k+1}\|^2 \\ & \leq \env_{\theta\psi}(\vx^{k}) + \frac{1}{2\theta}\left[ \|\bar\vx^k - \vx^{k+1}\|^2 - \|\bar\vx^k - \vx^{k}\|^2 \right]. \label{eq:a-core-estimate} \end{align}
\endgroup
 Taking conditional expectation in \eqref{eq:a-core-estimate} and applying \cref{lem:a-small-lemma}, \ref{C4}, and \cref{lem:psi-env}, we obtain
\begin{align*} \Exp[\env_{\theta\psi}(\vx^{k+1})-\bar\psi \mid \mathcal F_k] & \leq [\env_{\theta\psi}(\vx^{k})-\bar\psi] + \frac{1}{2\theta}\left[ \Exp[\|\vx^{k+1} - \bar\vx^{k}\|^2 \mid\mathcal F_k] - \|\bar\vx^k - \vx^{k}\|^2 \right], \\ & \hspace{-10ex} \leq [\env_{\theta\psi}(\vx^{k})-\bar\psi] - \frac{2-\theta_k\alpha_k}{2\theta} \theta_k\alpha_k \|\bar \vx^k-\vx^k\|^2 + \frac{\alpha_k^2\sigma_k^2}{2\theta(1-\alpha_k\tau)^2} \\ & \hspace{-10ex} \leq \left[1 + \frac{4{\sf C}\alpha_k^2}{\theta}\right] [\env_{\theta\psi}(\vx^{k})-\bar\psi] - {\sf L}\theta \alpha_k \|\nabla\env_{\theta\psi}(\vx^k)\|^2 + 2\alpha_k^2 \left[ {\sf C}{\sf L}_\varphi^2 + \frac{{\sf D}}{\theta}\right],  \end{align*}
almost surely, where we used $\vx^k-\bar\vx^k = \theta \nabla \env_{\theta\psi}(\vx^k)$. This finishes the proof of \cref{lem:psgd-1}.

\subsection{Proof of Lemma 3.5} \label{sec:prox-sgd-expectation}
We first establish an upper bound for the natural residual that is analogous to \cite[Lemma A.1]{DavDruKakLee20}.

\begin{lemma} \label{lem:this-will-be-helpful} Suppose that the conditions \ref{C2} and \ref{C3} are satisfied. Let $\vx \in \dom\varphi$, $\vv \in \Rn$, and $\alpha \in (0,\frac12\tau^{-1})$ be given and set $F_v^\alpha(\vx) := \vx - \prox_{\alpha\varphi}(\vx-\alpha\vv)$. Then, it holds that
\e \|F^\alpha_v(\vx)\|^2 \leq 4{\sf L}_\varphi^2\alpha^2 + 4\|\vv\|^2\alpha^2. \label{eq:ok-ok} \ee
\end{lemma}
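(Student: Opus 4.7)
My plan is to combine the first-order optimality condition for the proximal subproblem with the $\tau$-weak convexity characterization \eqref{eq:def-weak-weak} and the Lipschitz bound on $\varphi$ from \ref{C3}. Introduce the shorthand $\vz := \prox_{\alpha\varphi}(\vx - \alpha \vv)$, so that $F_v^\alpha(\vx) = \vx - \vz$, and observe that since $\vz$ is the minimizer of a strongly convex surrogate (because $\alpha < \frac{1}{2\tau} < \tau^{-1}$), it lies in $\dom\varphi$ and there exists $\vs \in \partial\varphi(\vz)$ with
\[
\vs = \tfrac{1}{\alpha}(\vx - \vz) - \vv .
\]

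Next, invoke the weak convexity characterization \eqref{eq:def-weak-weak} at $\vz$ tested against $\vx$ (note $\vx \in \dom\varphi$ by assumption), which yields
\[
\varphi(\vx) - \varphi(\vz) \geq \iprod{\vs}{\vx - \vz} - \tfrac{\tau}{2}\|\vx - \vz\|^2 = \left( \tfrac{1}{\alpha} - \tfrac{\tau}{2}\right)\|\vx - \vz\|^2 - \iprod{\vv}{\vx - \vz}.
\]
On the other hand, \ref{C3} gives the upper bound $\varphi(\vx) - \varphi(\vz) \leq {\sf L}_\varphi\|\vx-\vz\|$. Chaining the two bounds and applying Cauchy--Schwarz to the inner product $\iprod{\vv}{\vx-\vz}$, then dividing by $\|\vx-\vz\|$ (the trivial case being immediate), produces
\[
\left( \tfrac{1}{\alpha} - \tfrac{\tau}{2}\right)\|\vx - \vz\| \leq {\sf L}_\varphi + \|\vv\|.
\]

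Finally, square both sides and use $(a+b)^2 \leq 2(a^2 + b^2)$ to obtain
\[
\|F_v^\alpha(\vx)\|^2 \leq \frac{2({\sf L}_\varphi^2 + \|\vv\|^2)}{(\alpha^{-1} - \tau/2)^2}.
\]
The step size restriction $\alpha < \frac{1}{2\tau}$ ensures $\alpha\tau/2 < 1/4$, hence $1 - \alpha\tau/2 > 3/4 > 1/\sqrt{2}$, so $(\alpha^{-1}-\tau/2)^2 \geq \frac{1}{2\alpha^2}$, and the claimed bound \eqref{eq:ok-ok} follows. No step here is a real obstacle; the only point worth care is keeping track of the weak-convexity constant so that the denominator stays safely positive under $\alpha < \frac{1}{2\tau}$, which is exactly why the assumption is imposed.
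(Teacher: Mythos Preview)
Your proof is correct and follows essentially the same approach as the paper: combine the optimality condition for the proximal step, the weak-convexity inequality \eqref{eq:def-weak-weak}, and the Lipschitz bound from \ref{C3}. The only cosmetic difference is that the paper applies Young's inequality twice (to ${\sf L}_\varphi\|F_v^\alpha(\vx)\|$ and to $\iprod{\vv}{F_v^\alpha(\vx)}$) to obtain $\frac12(\alpha^{-1}-\tau)\|F_v^\alpha(\vx)\|^2 \leq \alpha{\sf L}_\varphi^2 + \alpha\|\vv\|^2$, whereas you use Cauchy--Schwarz, divide by $\|\vx-\vz\|$, and square at the end; both routes land on the same bound.
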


\begin{proof} 
Let us first note that if $h$ is $\tau$-weakly convex, proper, and lower semicontinuous, then the proximity operator $\prox_{\alpha h}$ can be equivalently characterized via the optimality condition:
\begin{equation} 
	\label{eq:let-it-be-prox} \vp = \prox_{\alpha h}(\vx) \quad \iff \quad \vx - \vp \in \alpha \partial h(\vp)
 \end{equation}
for all $\alpha \in (0,\tau^{-1})$. Consequently, since $\varphi$ is $\tau$-weakly convex, we have $\alpha^{-1}(F_v^\alpha(\vx)-\alpha \vv) \in \partial \varphi(\prox_{\alpha\varphi}(\vx-\alpha\vv))$. Using \eqref{eq:def-weak-weak}, this means
\[ \varphi(\vx) - \varphi(\prox_{\alpha\varphi}(\vx-\alpha\vv)) \geq \frac{1}{\alpha}\iprod{F_v^\alpha(\vx)-\alpha\vv}{F_v^\alpha(\vx)} - \frac{\tau}{2} \|F_v^\alpha(\vx)\|^2. \]
%
In addition, due to $\vx,\prox_{\alpha\varphi}(\vx-\alpha\vv) \in \dom\varphi$ and using Young's inequality, we have 
\[ \varphi(\vx)-\varphi(\prox_{\alpha\varphi}(\vx-\alpha\vv)) \leq {\sf L}_\varphi \|F_v^\alpha(\vx)\| \leq {\sf L}_\varphi^2\alpha + \frac{1}{4\alpha} \|F_v^\alpha(\vx)\|^2. \]
Using Young's inequality once more --- $\iprod{\vv}{F_v^\alpha(\vx)} \leq \alpha \|\vv\|^2 + \frac{1}{4\alpha}\|F_v^\alpha(\vx)\|$ --- it follows $\frac12[\frac{1}{\alpha}-\tau]\|F^\alpha_v(\vx)\|^2 \leq {\sf L}_\varphi^2\alpha + \alpha \|\vv\|^2$. The choice of $\alpha$ then readily implies \eqref{eq:ok-ok}.
\end{proof}

Based on \cref{lem:this-will-be-helpful}, we now verify \cref{lem:psgd-2}. Reusing the notation in \cref{lem:this-will-be-helpful}, we first observe
\[ \vx^{k+1} = \prox_{\alpha_k\varphi}(\vx^k-\alpha_k\vg^k) = \vx^k - F_{\vg^k}^{\alpha_k}(\vx^k). \]
Hence, by \eqref{eq:nabla-f-psi}, \cref{lem:psi-env}, and \ref{C4}, it follows
\begin{align*} \Exp[\|\vx^{k+1}-\vx^k\|^2 \mid \mathcal F_k] &= \Exp[\|F_{\vg^k}^{\alpha_k}(\vx^k)\|^2 \mid \mathcal F_k] \leq 4{\sf L}_\varphi^2\alpha_k^2 + 4\alpha_k^2 \Exp[\|\vg^k\|^2 \mid \mathcal F_k] \\ & =  4{\sf L}_\varphi^2\alpha_k^2 + 4\alpha_k^2 \|\nabla f(\vx^k)\|^2 +   4\alpha_k^2 \Exp[\|\nabla f(\vx^k)-\vg^k\|^2 \mid \mathcal F_k] \\ & \leq 4(2{\sf L}+{\sf C})\alpha_k^2 \cdot [f(\vx^k)-\bar f] +  4({\sf L}_\varphi^2+{\sf D})\alpha_k^2 \\ & \leq 8(2{\sf L}+{\sf C})\alpha_k^2 \cdot [\env_{\theta\psi}(\vx^k)-\bar\psi] +  4(((2{\sf L}+{\sf C})\theta+1){\sf L}_\varphi^2+{\sf D})\alpha_k^2. \end{align*}
This is exactly \eqref{eq:hello-hello} in \cref{lem:psgd-2}.

\subsection{Expected iteration complexity of prox-SGD}\label{appen:complexity PSGD}
As a byproduct of our analysis, we provide the expected iteration complexity of $\PSGD$ below. 
\begin{corollary}
Let $\{\vx^k\}_{k \geq 0}$ be generated by $\PSGD$ and let the assumptions \ref{C1}--\ref{C5} be satisfied. Then, for $\theta \in (0,[3{\sf L}+\tau]^{-1})$ and all $k$ with $\alpha_k \leq \min\{\frac{1}{2\tau},\frac{1}{2(\theta^{-1}-[{\sf L}+\tau])}\}$, it holds that 
\[
\min_{k=0,\ldots, T} \Exp[\|\nabla\env_{\theta\psi}(\vx^k)\|^2]  \leq \frac{\env_{\theta\psi}(\vx^{0})-\bar\psi + {\sf K} \sum_{k=0}^{T} \alpha_k^2}{{\sf L} \theta \sum_{k=0}^{T} \alpha_k}. 
\]
Here, ${\sf K}>0$ is defined in the proof. Consequently, if $\alpha_k = \frac{c}{(k+1) \log(k+2)}$ with some proper $c>0$, then we have 
\[
    \min_{k=0,\ldots, T} \Exp[\|\nabla\env_{\theta\psi}(\vx^k)\|^2]  \leq \calO\left(\frac{\log(T+2)}{\sqrt{T+1}}\right).
\]
\end{corollary}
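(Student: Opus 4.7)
The plan is to derive the complexity bound by combining the conditional descent inequality already established in \cref{lem:psgd-1} with a telescoping argument, and then to insert the specific step-size to obtain the asymptotic rate.

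First, I would take total expectation in the recursion from \cref{lem:psgd-1}, which is valid for every $k$ satisfying the stated step-size restriction. Rearranging to isolate the gradient term yields
\[
{\sf L}\theta \alpha_k \, \Exp[\|\nabla\env_{\theta\psi}(\vx^k)\|^2] \;\leq\; \Exp[\env_{\theta\psi}(\vx^{k})-\bar\psi] - \Exp[\env_{\theta\psi}(\vx^{k+1})-\bar\psi] + 4{\sf C}\theta^{-1}\alpha_k^2 \, \Exp[\env_{\theta\psi}(\vx^k)-\bar\psi] + 2\alpha_k^2({\sf C}{\sf L}_\varphi^2 + {\sf D}\theta^{-1}).
\]
Summing from $k=0$ to $T$, the first two terms telescope. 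Using $\env_{\theta\psi}(\vx) \geq \bar \psi$ to drop the nonnegative tail term $\Exp[\env_{\theta\psi}(\vx^{T+1})-\bar\psi]$ leaves $\env_{\theta\psi}(\vx^0)-\bar\psi$ on the right, plus an error that is linear in $\sum_{k=0}^{T}\alpha_k^2 \, \Exp[\env_{\theta\psi}(\vx^k)-\bar\psi]$ and in $\sum_{k=0}^{T}\alpha_k^2$.

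The critical step is to absorb the weighted error term by means of a uniform bound on $\Exp[\env_{\theta\psi}(\vx^k)-\bar\psi]$. Such a bound is already available from \textbf{Phase I} of \cref{sec:prox-SGD}: since $\{\Exp[\env_{\theta\psi}(\vx^k)]\}_{k\geq 0}$ converges to some finite ${\sf E}$, there exists ${\sf F}>0$ with $\Exp[\env_{\theta\psi}(\vx^k)-\bar\psi] \leq {\sf F}$ for every $k$. Setting ${\sf K} := 4{\sf C}\theta^{-1}{\sf F} + 2({\sf C}{\sf L}_\varphi^2 + {\sf D}\theta^{-1})$ and combining with the elementary estimate $\min_{0\leq k\leq T} \Exp[\|\nabla\env_{\theta\psi}(\vx^k)\|^2] \leq [\sum_{k=0}^T \alpha_k]^{-1} \sum_{k=0}^T \alpha_k \, \Exp[\|\nabla\env_{\theta\psi}(\vx^k)\|^2]$ yields precisely the claimed finite-time bound. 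For the asymptotic statement, I would substitute the prescribed step-size into this bound and estimate $\sum_{k=0}^T \alpha_k^2$ and $\sum_{k=0}^T \alpha_k$ by integral comparison: \ref{C5} ensures that the numerator is $\calO(1)$, and the resulting denominator asymptotics produce the stated $\calO(\log(T+2)/\sqrt{T+1})$ rate.

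The main obstacle to anticipate is the multiplicative factor $(1+4{\sf C}\theta^{-1}\alpha_k^2)$ in \cref{lem:psgd-1}, which prevents a fully clean telescoping argument. The natural alternative of dividing through by the cumulative product $\Gamma_k := \prod_{j<k}(1+4{\sf C}\theta^{-1}\alpha_j^2)$ would introduce an extraneous $\Gamma_\infty$ factor in front of $\env_{\theta\psi}(\vx^0)-\bar\psi$ and spoil the clean form of the corollary. Exploiting the uniform bound ${\sf F}$ established in \textbf{Phase I} sidesteps this inflation and is exactly what produces the constant ${\sf K}$ announced in the statement.
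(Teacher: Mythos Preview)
Your proposal is correct and follows essentially the same route as the paper: take total expectation in \cref{lem:psgd-1}, use a uniform bound on $\Exp[\env_{\theta\psi}(\vx^k)-\bar\psi]$ to linearize the multiplicative factor, and telescope. The only difference is the source of that uniform bound: you import ${\sf F}$ from \textbf{Phase~I} via the supermartingale convergence theorem, whereas the paper re-derives it constructively by unrolling the recursion and bounding $\prod_j(1+c_1\alpha_j^2)\leq\exp(c_1\sum_j\alpha_j^2)$, yielding an explicit constant ${\sf G}$ (and hence an explicit ${\sf K}=c_1{\sf G}+c_2$); this makes the complexity statement self-contained rather than relying on \cref{Theorem:martingale_convergece}, but the two arguments are otherwise identical.
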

\begin{proof}
	Taking total expectation in \Cref{lem:psgd-1} gives 
	\e\label{eq:complexity recursion 1}
	\begin{aligned}
		\Exp[\env_{\theta\psi}(\vx^{k+1}) - \bar\psi ] & \leq (1 + 4{\sf C}\theta^{-1}\alpha_k^2) \Exp[\env_{\theta\psi}(\vx^{k})-\bar\psi] \\ & \hspace{4ex} - {\sf L}\theta \alpha_k \Exp[\|\nabla\env_{\theta\psi}(\vx^k)\|^2] + 2\alpha_k^2 ( {\sf C}{\sf L}_\varphi^2 + {\sf D}{\theta^{-1}}).
	\end{aligned}
	\ee
	Then, by unrolling this recursion {and setting $c_1 := 4{\sf C}\theta^{-1}$ and $c_2 := 2({\sf C}{\sf L}_\varphi^2+{\sf D}\theta^{-1})$, we have 
	\begin{align*}
		\Exp[\env_{\theta\psi}(\vx^{k+1}) - \bar\psi ]  & \leq \left\{ {\prod}_{j=0}^{k}(1 + c_1\alpha_j^2)\right\} \underbracket{[\env_{\theta\psi}(\vx^{0})-\bar\psi]}_{E_1}  \\ & \hspace{4ex} + c_2 {\sum}_{j=0}^{k-1}\left\{ {\prod}_{i=j+1}^{k}(1 + c_1\alpha_i^2) \right\} \alpha_j^2 + c_2 \alpha_k^2. \end{align*}
	Next, using $\log(1+x) \leq x$ for all $x \geq 0$, we obtain the estimate 
	\[ {\prod}_{i=j}^{k}(1 + c_1\alpha_i^2) = \exp\left({\sum}_{i=j}^k \log(1+c_1\alpha_i^2)\right) \leq \exp\left({\sum}_{i=0}^\infty c_1\alpha_i^2\right) =: c_3 \]
	for all $j = 0,\dots,k$ and $k \in \N$. Thus, it follows $\Exp[\env_{\theta\psi}(\vx^{k+1}) - \bar\psi ] \leq c_3 E_1 + c_2c_3 \sum_{j=0}^k \alpha_j^2$ and we can infer $\Exp[\env_{\theta\psi}(\vx^{k}) - \bar\psi ] \leq {\sf G}$ for all $k$ and some constant $ {\sf G}>0$. Invoking this bound in \eqref{eq:complexity recursion 1} yields
\[		\Exp[\env_{\theta\psi}(\vx^{k+1}) - \bar\psi ] \leq \Exp[\env_{\theta\psi}(\vx^{k})-\bar\psi] - {\sf L}\theta \alpha_k \Exp[\|\nabla\env_{\theta\psi}(\vx^k)\|^2] + \alpha_k^2 \underbracket{(c_1{\sf G}+c_2)}_{\sf K}. \]
	}Finally, summing the above recursion from $k=0$ to $k =T$ and rearranging the terms, we have
	\[
	     \min_{k=0,\ldots, T} \Exp[\|\nabla\env_{\theta\psi}(\vx^k)\|^2]  \leq \frac{\Exp[\env_{\theta\psi}(\vx^{0})-\bar\psi] + {\sf K} \sum_{k=0}^{T} \alpha_k^2}{{\sf L} \theta \sum_{k=0}^{T} \alpha_k}. 
	\]
\end{proof}

\vspace{-0.5cm}
\section{Results and proofs for Subsection 3.4}

\subsection{Three stochastic model-based methods}\label{appen:three SMM}
\vspace{-0.2cm}
Depending on the choice of the model function $f_{\vx^k}(\cdot, \xi^k)$, we recover different stochastic optimization methods. Specifically, we can consider the following examples: 
\[
\left[
\begin{aligned}
	f_{\vx^k}(\vx, \xi^{k})  & = f(\vx^k, \xi^{k}) + \iprod{\vs(\vx^{k}, \xi^{k})}{\vx - \vx^k)} \quad &\text{stochastic subgradient model,} \\
	f_{\vx^k}(\vx, \xi^{k})  & = f(\vx, \xi^{k}) \quad &\text{stochastic proximal point model,}
\end{aligned} \right.
\]
where $\vs(\vx^{k}, \xi^{k})\in \partial f(\vx^k,\xi^k)$ is a selected subgradient. In addition, if the weakly convex function $f$ has the composite form $f(\vx, \xi) = h(c(\vx,\xi),\xi)$, where $h(\cdot, \xi)$ is convex and Lipschitz continuous,  $c(\cdot, \xi)$ is smooth and has Lipschitz continuous Jacobian, we can further cover the prox-linear model:
\[
\begin{aligned}
	f_{\vx^k}(\vx, \xi^{k}) & = h(c(\vx^k,\xi^k) + \nabla c(\vx^k,\xi^k) (\vx- \vx^k), \xi^k), \quad & \text{stochastic prox-linear model.} \\
\end{aligned} 
\]

\subsection{Proof of Lemma 3.9}\label{appen:SMM step length}
\vspace{-0.2cm}
Under our assumptions, the estimate \cite[(4.8)]{DavDru19} is valid. {Furthermore, in \cite[Lemma 4.1]{DavDru19}, it is shown that the assumptions \ref{D1} and \ref{D3} imply Lipschitz continuity of the mapping $f$ (with constant ${\sf L}$). Hence, setting $\vx = \vx^k$ in \cite[(4.8)]{DavDru19},} we have 
\begingroup
\allowdisplaybreaks
\[
\begin{aligned}
	  \Exp[\|\vx^{k+1} - \vx^k\|^2 \mid \calF_k] &\leq -\frac{2\alpha_k}{1-\eta \alpha_k} \Exp[\psi(\vx^{k+1}) - \psi(\vx^k) \mid \calF_k] + \frac{2{\sf L}^2 \alpha_{k}^2}{1-\eta \alpha_k} \\
	  &\leq \frac{2({\sf L} + {\sf L}_\varphi )\alpha_k}{1-\eta \alpha_k} \Exp[\|\vx^{k+1} - \vx^k\| \mid \calF_k] + \frac{2{\sf L}^2 \alpha_{k}^2}{1-\eta \alpha_k} \\
	  &\leq \frac{1}{2} \Exp[\|\vx^{k+1} - \vx^k\|^2 \mid \calF_k] + \frac{2({\sf L} + {\sf L}_\varphi )^2\alpha_k^2}{(1-\eta \alpha_k)^2} + \frac{2{\sf L}^2 \alpha_{k}^2}{1-\eta \alpha_k}, 
\end{aligned}
\]
\endgroup
where we have used the Lipschitz continuity of $\psi$ in the second inequality, while the last inequality is due to Young's and Jensen inequalities. Rearranging terms gives 
\[
     \Exp[\|\vx^{k+1} - \vx^k\|^2 \mid \calF_k]  \leq \left(\frac{4({\sf L} + {\sf L}_\varphi )^2}{(1-\eta \alpha_k)^2} + \frac{4{\sf L}^2  }{1-\eta \alpha_k} \right) \alpha_k^2. 
\]
Thus, for all $k$ with $\alpha_k \leq 1/(2\eta)$, we obtain $\Exp[\|\vx^{k+1} - \vx^k\|^2 \mid \calF_k]  \leq (16({\sf L} + {\sf L}_\varphi )^2 + 8{\sf L}^2) \alpha_k^2$.

{
\subsection{Convergence of {SMM}: An extended setting} \label{app:smm-extension}
In the following, we show that the analysis and results presented in the previous section and in \cref{sec:SMM} can be further strengthened and generalized. In particular, it is possible to work with assumptions that are more aligned with the conditions \ref{C1}--\ref{C4} for $\PSGD$. We consider the assumptions: 
\begin{enumerate}[label=\textup{\textrm{(E.\arabic*)}},topsep=0pt,itemsep=0ex,partopsep=0ex]
	\item \label{E1} The stochastic model function $f_{\vx}$ satisfies a one-sided accuracy property, i.e., we have $\Exp[f_{\vx^k}(\vx^k,\xi^k) \mid \mathcal F_k] = f(\vx^k)$ for all $k$ and 
	\[
	\Exp[f_{\vx^k}(\vy,\xi^k) - f(\vy) \mid \mathcal F_k] \leq \frac{\tau}{2} \|\vx^k-\vy\|^2 \quad \forall~\vy\in\dom \varphi, \quad \forall~k, \quad {\text{almost surely}}. \]
	\item \label{E2} The function $\vy \mapsto f_{\vx^k}(\vy,\xi^k)$ is $\eta$-weakly convex for all $k$.
	\item \label{E3} There exists a sequence $\{\vg^k\}_{k\geq 0}$ and constants ${\sf C}, {\sf D} \geq 0$ such that $\vg^k \in \partial f_{\vx^k}(\vx^k,\xi^k)$ and $\Exp[\|\vg^k\|^2 \mid \mathcal F_k] \leq {\sf C}[f(\vx^k)-\bar f] + {\sf D}$ almost surely for all $k$. 		
	\item \label{E4} The function $\varphi$ is $\rho$-weakly convex, proper, lower semicontinuous, and ${\sf L}_\varphi$-Lipschitz continuous on $\dom\varphi$. In addition, $\varphi$ is bounded from below on $\dom\varphi$, i.e., we have $\varphi(\vx) \geq \bar\varphi$ for all $\vx \in \dom\varphi$. 
	\item \label{E5} The step sizes $\{\alpha_k\}_{k \geq 0}$ satisfy $\sum_{k=0}^\infty \alpha_k = \infty$ and $\sum_{k=0}^\infty \alpha_k^2 < \infty$. 
\end{enumerate}
Concerning the regularity of $f$, we will work with \emph{one} of the following scenarios:  
\begin{enumerate}[label=\textup{\textrm{(F.\arabic*)}},topsep=0pt,itemsep=0ex,partopsep=0ex]
	\item \label{F1} The mapping $f$ is bounded from below and ${\sf L}_f$-Lipschitz continuous on $\dom \varphi$, i.e., there exists $\bar f$ such that $f(\vx) \geq \bar f$ for all $\vx \in \dom \varphi$ and we have $|f(\vx)-f(\vy)| \leq {\sf L}_f \|\vx - \vy\|$ for all $\vx,\vy \in \dom \varphi$.
	\item \label{F2} The function $f$ is bounded from below on $\Rn$, i.e., there is $\bar f$ such that $f(\vx) \geq \bar f$ for all $\vx \in \Rn$, and the gradient mapping $\nabla f$ is Lipschitz continuous (on $\Rn$) with modulus ${\sf L} > 0$. 
\end{enumerate}
In contrast to \cref{sec:SMM} and \cite{DavDru19}, we will not require explicit Lipschitz continuity of the model function $f_{\vx}$. Instead --- as we will verify now --- convergence of $\SMM$ can be established by only assuming Lipschitz continuity or Lipschitz smoothness of the mapping $f$. As mentioned in \Cref{appen:SMM step length}, the conditions \ref{D1} and \ref{D3} already imply Lipschitz continuity of $f$ (see again  \cite[Lemma 4.1]{DavDru19}). Hence, {assuming \ref{F1} can be more general.} Moreover, the alternative assumption \ref{F2} allows to cover the case where $f$ is not Lipschitz continuous but sufficiently smooth. This situation appears more frequently in stochastic proximal point methods where the model function is chosen as $f_{\vx^k}(\vx,\xi^k) = f(\vx,\xi^k)$. Assumption \ref{E2} {is parallel to assumption} \ref{D2}. Assumption \ref{E3} is a mild variance-type condition that is similar to \ref{C4} and which can be weaker than \ref{D3}. We now establish the core estimates provided in \cref{lemma:recursion model-based} and \cref{lemma:step length model-based} under the more general assumptions \ref{E1}--\ref{E5} and \ref{F1} or \ref{F2}. Our analysis is inspired by the proof of \cite[Lemma 4.2]{DavDru19} but is closer to \cite[Appendix C]{MilSchUlb22}.

\textit{Lipschitz continuous $f$}. We first investigate core properties of $\SMM$ under \ref{F1}. Let us define $\Psi_k(\vx) := f_{\vx^k}(\vx,\xi^k)+\varphi(\vx)$ and $F_k(\vx) := f_{\vx^k}(\vx,\xi^k) + \frac{\eta}{2}\|\vx-\vx^k\|^2$. Then, by assumption, the mapping $\vx \mapsto \Psi_k(\vx) + \frac{1}{2{\alpha_k}}\|\vx-\vx^k\|^2$ is strongly convex with parameter ${\alpha_k^{-1}}-\zeta$, where $\zeta = \eta+\rho$. Specifically, if $\alpha_k < \zeta^{-1}$ and by \eqref{eq:model based}, we have
\e \label{eq:yes-its-extra} \Psi_k(\vx) + \frac{1}{2\alpha_k}\|\vx-\vx^k\|^2 \geq \Psi_k(\vx^{k+1}) + \frac{1}{2\alpha_k}\|\vx^{k+1}-\vx^k\|^2 + \frac12[\alpha_k^{-1}-\zeta] \|\vx - \vx^{k+1}\|^2 \ee
for all $\vx \in \dom \varphi$ and due to the convexity of $F_k$ and the Lipschitz continuity of $f$, it holds that
\begin{align} \nonumber \Psi_k(\vx) - \Psi_k(\vx^{k+1}) & = [\psi(\vx)-\psi(\vx^{k+1})] + [f_{\vx^k}(\vx,\xi^k)-f(\vx)] + [f(\vx^{k+1})-f(\vx^k)] \\ \nonumber & \hspace{4ex} + [f(\vx^k)-f_{\vx^k}(\vx^k,\xi^k)] + [F_k(\vx^k)-F_k(\vx^{k+1})] + \frac{\eta}{2}\|\vx^{k+1}-\vx^k\|^2 \\ \nonumber & \leq  [\psi(\vx)-\psi(\vx^{k+1})] + [f_{\vx^k}(\vx,\xi^k)-f(\vx)] + [f(\vx^k)-f_{\vx^k}(\vx^k,\xi^k)]  \\ & \hspace{4ex} + \iprod{\vg^k}{\vx^k-\vx^{k+1}} + {\sf L}_f \|\vx^{k+1}-\vx^k\| + \frac{\eta}{2}\|\vx^{k+1}-\vx^k\|^2, \label{eq:intermediate-psi} \end{align}
where we used $\vg^k \in \partial f_{\vx^k}(\vx^k,\xi^k) = \partial F_k(\vx^k)$. Upon taking conditional expectation and using the Cauchy-Schwarz and Young's inequality and \ref{E1}, we obtain
\begin{align*} \Exp[\Psi_k(\vx) - \Psi_k(\vx^{k+1}) \mid \mathcal F_k] & \leq \Exp[\psi(\vx)-\psi(\vx^{k+1}) \mid \mathcal F_k] + 2\alpha_k \Exp[\|\vg^k\|^2 \mid \mathcal F_k] + 2{\sf L}_f^2\alpha_k \\ & \hspace{4ex} + \frac{\tau}{2} \|\vx^k-\vx\|^2 + \frac12 \left[\eta+\frac{1}{{2}\alpha_k}\right] \Exp[\|\vx^{k+1}-\vx^{k}\|^2 \mid \mathcal F_k]. \end{align*}
Rearranging the terms in \eqref{eq:yes-its-extra}, this yields
\begin{align} \nonumber  \Exp[\|\vx^{k+1}-\vx\|^2 \mid \mathcal F_k] & \\ \nonumber & \hspace{-12ex} \leq \frac{1+\alpha_k\tau}{1-\alpha_k\zeta} \|\vx^k-\vx\|^2 + \frac{2\alpha_k}{1-\alpha_k\zeta} \Exp[\psi(\vx)-\psi(\vx^{k+1}) \mid \mathcal F_k] \\ & \hspace{-8ex} + \frac{4\alpha_k^2}{1-\alpha_k\zeta} \Exp[\|\vg^k\|^2 \mid \mathcal F_k] + \frac{4{\sf L}_f^2\alpha_k^2}{1-\alpha_k\zeta} - \frac{1-2\alpha_k\eta}{2(1-\alpha_k\zeta)} \Exp[\|\vx^{k+1}-\vx^{k}\|^2 \mid \mathcal F_k]. \label{eq:quick-new-result} \end{align}
Let us now define $\bar \vx^k := \prox_{\theta\psi}(\vx^k)$ for $\theta \in (0,\zeta^{-1})$. Then, due to $\vx^k,\bar\vx^k \in \dom \varphi$ and {the Lipschitz continuity of $f$ and $\varphi$} and applying Young's inequality, it holds that
\e\label{eq:bound f}
\begin{aligned}
	f(\vx^k) - \bar f \leq \psi(\vx^k)-\bar\psi & \leq \env_{\theta\psi}(\vx^k) + ({\sf L}_f+{\sf L}_\varphi) \|\vx^k-\bar\vx^k\| - \frac{1}{2\theta} \|\vx^k-\bar\vx^k\|^2 - \bar \psi \\ & \leq \env_{\theta\psi}(\vx^k) - \bar\psi + 0.5\theta{({\sf L}_f+{\sf L}_\varphi)^2}.  
\end{aligned}
\ee
In addition, setting $\vx = \bar \vx^k$ in \eqref{eq:quick-new-result}, using
\[ \psi(\bar \vx^k) = \env_{\theta\psi}(\vx^k) - \frac{1}{2\theta}\|\vx^k-\bar\vx^k\|^2 \leq \psi(\vx^{k+1}) + \frac{1}{2\theta} [\|\vx^{k+1}-\vx^k\|^2 - \|\vx^k-\bar\vx^k\|^2] \]
and combining \ref{E3} and \eqref{eq:bound f}, we obtain
\begin{align}
	\nonumber \Exp[\|\vx^{k+1}-\bar\vx^k\|^2 \mid \mathcal F_k] & \leq \left[1-\frac{(\theta^{-1}-\zeta-\tau)\alpha_k}{1-\alpha_k\zeta}\right] \|\vx^k-\bar\vx^k\|^2 + \frac{4{\sf C}\alpha_k^2}{1-\alpha_k\zeta} [\env_{\theta\psi}(\vx^{k})-\bar\psi] \\ & \hspace{4ex} + \frac{{\sf G}\alpha_k^2}{1-\alpha_k\zeta} - \frac{1-2[\eta+\theta^{-1}]\alpha_k}{2(1-\alpha_k\zeta)} \Exp[\|\vx^{k+1}-\vx^{k}\|^2 \mid \mathcal F_k], \label{eq:bound step length}
\end{align}
where ${\sf G} := 2{\sf C}\theta({\sf L}_f+{\sf L}_\varphi)^2 + 4({\sf L}_f^2+{\sf D})$. Hence, {using the definition of the Moreau envelope and \eqref{eq:bound step length}}, for $\theta \in (0,(\zeta+\tau)^{-1})$ and all $k$ with $\alpha_k \leq \min\{\frac{1}{2\zeta},\frac{\theta}{2(1+\theta\eta)}\}$, it follows
\begingroup
\allowdisplaybreaks
\begin{align} \nonumber \Exp[\env_{\theta\psi}(\vx^{k+1})-\bar\psi \mid \mathcal F_k] & \\ \nonumber & \hspace{-16ex} \leq [\env_{\theta\psi}(\vx^{k})-\bar\psi] + \frac{1}{2\theta}\left[ \Exp[\|\vx^{k+1} - \bar\vx^{k}\|^2 \mid\mathcal F_k] - \|\bar\vx^k - \vx^{k}\|^2 \right], \\ 
	& \hspace{-16ex} \leq \left[1 + \frac{4{\sf C}\alpha_k^2}{\theta}\right] [\env_{\theta\psi}(\vx^{k})-\bar\psi] - \frac{1-\theta(\zeta+\tau)}{2} \alpha_k \|\nabla\env_{\theta\psi}(\vx^k)\|^2 + \frac{{\sf G}\alpha_k^2}{\theta}. \label{eq:nice-extra} \end{align}
\endgroup
{This gives an estimate similar to \Cref{lemma:recursion model-based}.}

Next, let us consider an iteration $k$ with $\alpha_k \leq \min\{\frac{1}{2\zeta},\frac{1}{2\eta}\}$. Using \eqref{eq:quick-new-result} with $\vx = \vx^k$ and Young's inequality, we obtain
\begin{align*} \frac12 \Exp[\|\vx^{k+1}-\vx^k\|^2 \mid \mathcal F_k] & \\ & \hspace{-16ex} \leq {2\alpha_k}\Exp[\psi(\vx^{k})-\psi(\vx^{k+1}) \mid \mathcal F_k] + {4\alpha_k^2} \Exp[\|\vg^k\|^2 \mid \mathcal F_k] + {4{\sf L}_f^2\alpha_k^2} \\ & \hspace{-16ex} \leq 2({\sf L}_f+{\sf L}_\varphi)\alpha_k\Exp[\|\vx^{k+1}-\vx^k\| \mid \mathcal F_k] + {4{\sf C}\alpha_k^2}[\env_{\theta\psi}(\vx^k)-\bar\psi] + {\sf G}\alpha_k^2 \\ & \hspace{-16ex} \leq \frac14 \Exp[\|\vx^{k+1}-\vx^k\|^2 \mid \mathcal F_k] + {4{\sf C}\alpha_k^2}[\env_{\theta\psi}(\vx^k)-\bar\psi] + [{\sf G}+4({\sf L}_f+{\sf L}_\varphi)^2]\alpha_k^2. \end{align*}
Rearranging the terms yields $\Exp[\|\vx^{k+1}-\vx^k\|^2 \mid \mathcal F_k] \leq {16{\sf C}\alpha_k^2}[\env_{\theta\psi}(\vx^k)-\bar\psi] + 4[{\sf G}+4({\sf L}_f+{\sf L}_\varphi)^2]\alpha_k^2$, {which is similar to the estimates in \Cref{lemma:step length model-based} (indeed, it is more similar to \Cref{lem:psgd-2})}.

With these estimates and {by following exactly the derivations in \Cref{sec:prox-SGD}}, we can  show $\Exp[\|\nabla \env_{\theta\psi}(\vx^k)\|] \to 0$ and $\|\nabla \env_{\theta\psi}(\vx^k)\| \to 0$ almost surely. 

\textit{Lipschitz smooth $f$}. We continue our discussion under \ref{F2}. Using the Lipschitz continuity of $\nabla f$, the estimate \eqref{eq:intermediate-psi} changes to
\begin{align*} \nonumber \Psi_k(\vx) - \Psi_k(\vx^{k+1}) & \leq  [\psi(\vx)-\psi(\vx^{k+1})] + [f_{\vx^k}(\vx,\xi^k)-f(\vx)] + [f(\vx^k)-f_{\vx^k}(\vx^k,\xi^k)]  \\ & \hspace{4ex} + \iprod{\vg^k-\nabla f(\vx^k)}{\vx^k-\vx^{k+1}} + \frac{{\sf L}+\eta}{2}\|\vx^{k+1}-\vx^k\|^2.  \end{align*}
Taking conditional expectation and applying Young's inequality and \ref{E1}, this yields
\begin{align*} \Exp[\Psi_k(\vx) - \Psi_k(\vx^{k+1}) \mid \mathcal F_k] & \leq \Exp[\psi(\vx)-\psi(\vx^{k+1}) \mid \mathcal F_k] + 2\alpha_k \Exp[\|\vg^k\|^2 \mid \mathcal F_k] + 2\alpha_k\|\nabla f(\vx^k)\|^2 \\ & \hspace{4ex} + \frac{\tau}{2} \|\vx^k-\vx\|^2 + \frac12 \left[{\sf L}+\eta+\frac{1}{2\alpha_k}\right] \Exp[\|\vx^{k+1}-\vx^{k}\|^2 \mid \mathcal F_k]. \end{align*}
By \cref{lem:psi-env} and \eqref{eq:nabla-f-psi}, we can further infer $f(\vx) - \bar f \leq 2 [\env_{\theta\psi}(\vx)-\bar\psi] + {\sf L}_\varphi^2\theta$ and $\|\nabla f(\vx)\|^2 \leq 2{\sf L}[f(\vx)-\bar f]$ for all $\vx \in \dom \varphi$. (Notice that the conditions \ref{C1}--\ref{C3} and \ref{E4} and \ref{F2} are identical). Hence, similar to \eqref{eq:quick-new-result}, it follows
\begin{align} \nonumber  \Exp[\|\vx^{k+1}-\vx\|^2 \mid \mathcal F_k] & \\ \nonumber & \hspace{-12ex} \leq \frac{1+\alpha_k\tau}{1-\alpha_k\zeta} \|\vx^k-\vx\|^2 + \frac{2\alpha_k}{1-\alpha_k\zeta} \Exp[\psi(\vx)-\psi(\vx^{k+1}) \mid \mathcal F_k] + \frac{\tilde{\sf G}\alpha_k^2}{1-\alpha_k\zeta} \\ & \hspace{-8ex} + \frac{8({\sf C}+2{\sf L})\alpha_k^2}{1-\alpha_k\zeta} [\env_{\theta\psi}(\vx^k)-\bar\psi]  - \frac{1-2\alpha_k({\sf L}+\eta)}{2(1-\alpha_k\zeta)} \Exp[\|\vx^{k+1}-\vx^{k}\|^2 \mid \mathcal F_k], \label{eq:even-newer-result} \end{align}
where $\tilde{\sf G} = 4(({\sf C}+2{\sf L}){\sf L}_\varphi^2 + {\sf D})$ and $\zeta = \eta+\rho$. At this point, we can fully mimic our earlier calculations. In particular, similar to \eqref{eq:nice-extra}, for $\theta \in (0,(\zeta+\tau)^{-1})$ and all $k$ with $\alpha_k \leq \min\{\frac{1}{2\zeta},\frac{\theta}{2(1+\theta({\sf L}+\eta))}\}$, we obtain
\begin{align*} \nonumber \Exp[\env_{\theta\psi}(\vx^{k+1})-\bar\psi \mid \mathcal F_k] &\leq \left[1 + \frac{8{(\sf C}+2{\sf L})\alpha_k^2}{\theta}\right] [\env_{\theta\psi}(\vx^{k})-\bar\psi] \\
	&\hspace{4ex} - \frac{1-\theta(\zeta+\tau)}{2} \alpha_k \|\nabla\env_{\theta\psi}(\vx^k)\|^2 + \frac{\tilde{\sf G}\alpha_k^2}{\theta}.  \end{align*}
Setting $\vx = \vx^k$ in \eqref{eq:even-newer-result} and using the Lipschitz continuity of $\varphi$ and $\nabla f$ and Young's inequality, it holds that 
\begingroup
\allowdisplaybreaks
\begin{align*} &\frac12 \Exp[\|\vx^{k+1}-\vx^k\|^2 \mid \mathcal F_k]  \leq {2\alpha_k}\Exp[\psi(\vx^{k})-\psi(\vx^{k+1}) \mid \mathcal F_k] \\& \hspace{25ex} + {8({\sf C}+2{\sf L})\alpha_k^2}[\env_{\theta\psi}(\vx^k)-\bar\psi] + {\tilde{\sf G}\alpha_k^2} \\ & \leq 2{\sf L}_\varphi \alpha_k\Exp[\|\vx^{k+1}-\vx^k\| \mid \mathcal F_k] + {\sf L}\alpha_k \Exp[\|\vx^{k+1}-\vx^k\|^2 \mid \mathcal F_k] \\ & \hspace{4ex} -2\alpha_k \Exp[\iprod{\nabla f(\vx^k)}{\vx^{k+1}-\vx^k} \mid \mathcal F_k] + {8({\sf C}+2{\sf L})\alpha_k^2}[\env_{\theta\psi}(\vx^k)-\bar\psi]  + \tilde {\sf G}\alpha_k^2 \\ & \leq \frac{ \Exp[\|\vx^{k+1}-\vx^k\|^2 \mid \mathcal F_k]}{4}  + 16\alpha_k^2 \|\nabla f(\vx^k)\|^2    + {8({\sf C}+2{\sf L})\alpha_k^2}[\env_{\theta\psi}(\vx^k)-\bar\psi] + [\tilde {\sf G}+16{\sf L}_\varphi^2]\alpha_k^2 \\ & \leq  \frac14 \Exp[\|\vx^{k+1}-\vx^k\|^2 \mid \mathcal F_k] +   {8({\sf C}+10{\sf L})\alpha_k^2}[\env_{\theta\psi}(\vx^k)-\bar\psi]  + [\tilde {\sf G}+16(1+2{\sf L}\theta){\sf L}_\varphi^2]\alpha_k^2 \end{align*}
\endgroup
for all $k$ with $\alpha_k \leq \min\{\frac{1}{2\zeta},\frac{1}{8{\sf L}},\frac{1}{2({\sf L}+\eta)}\}$. As before, this establishes variants of \cref{lemma:recursion model-based} and \cref{lemma:step length model-based} {(more precisely, \Cref{lem:psgd-2})} and allows to follow the derivations in \Cref{sec:prox-SGD} to establish convergence results. 

Finally, we summarize all the above observations in the following corollary. 

\begin{corollary}\label{thm:model based new}
	Let us consider the family of stochastic model-based methods \eqref{eq:model based} for the problem \eqref{eq:wcvx prob} under assumptions \ref{E1}--\ref{E5} and \ref{F1} or \ref{F2}. Then, {for all $\theta \in (0,(\eta + \rho +\tau)^{-1})$}, we have $\lim_{k \to \infty} \Exp[\|\nabla \env_{\theta\psi}(\vx^k)\| ] = 0$ and $\lim_{k \to \infty} \|\nabla \env_{\theta\psi}(\vx^k)\| = 0$ almost surely. 
\end{corollary}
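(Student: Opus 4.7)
The plan is to apply \Cref{thm:convergence theorem} with the choices $\vPhi \equiv \nabla \env_{\theta\psi}$ and $\mu_k \equiv \alpha_k$, verifying \ref{P1}--\ref{P4} for the expectation claim and \ref{P1}--\ref{P2} together with \ref{P3'}--\ref{P4'} for the almost sure claim. The bulk of the analytical work has already been carried out in \Cref{app:smm-extension}: in the Lipschitz-continuous case \ref{F1}, the recursion \eqref{eq:nice-extra} and the matching step-length bound of the form $\Exp[\|\vx^{k+1}-\vx^k\|^2\mid\mathcal F_k] \leq C_1\alpha_k^2[\env_{\theta\psi}(\vx^k)-\bar\psi] + C_2\alpha_k^2$ have been established, and under \ref{F2} their direct analogues are derived immediately after. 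What remains is to package these estimates as an instance of the unified convergence theorem.

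\textbf{Phase I.} By \cite[Corollary 3.4]{HohLabObe20}, $\nabla\env_{\theta\psi}$ is Lipschitz continuous for any $\theta \in (0,(\eta+\rho+\tau)^{-1})$, which immediately gives \ref{P1}. For \ref{P2}, both scenarios produce, for all $k$ sufficiently large (using $\alpha_k \to 0$ from \ref{E5}), a recursion of the shape
\[ \Exp[\env_{\theta\psi}(\vx^{k+1})-\bar\psi \mid \mathcal F_k] \leq (1+c_1\alpha_k^2)[\env_{\theta\psi}(\vx^k)-\bar\psi] - c_2\alpha_k\|\nabla\env_{\theta\psi}(\vx^k)\|^2 + c_3\alpha_k^2, \]
with $c_2 > 0$ on account of $\theta(\eta+\rho+\tau) < 1$. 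Since $\env_{\theta\psi}(\vx^k) \geq \bar\psi$ and $\sum_k \alpha_k^2 < \infty$, \Cref{Theorem:martingale_convergece} applies and yields $\sum_k \alpha_k \Exp[\|\nabla \env_{\theta\psi}(\vx^k)\|^2] < \infty$ together with convergence of $\{\Exp[\env_{\theta\psi}(\vx^k)]\}$ to some ${\sf E} \in \R$ and of $\{\env_{\theta\psi}(\vx^k)\}$ to a random variable $e^\star$ almost surely. Hence \ref{P2} holds with $a = 2$, and in particular there exists ${\sf F}$ such that $\Exp[\env_{\theta\psi}(\vx^k) - \bar\psi] \leq {\sf F}$ for all $k$.

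\textbf{Phase II (expected convergence).} Taking total expectation in the conditional step-length bound and invoking the uniform bound ${\sf F}$ just obtained, we arrive at $\Exp[\|\vx^{k+1}-\vx^k\|^2] \leq (C_1 {\sf F} + C_2)\alpha_k^2$ for $k$ large enough, which verifies \ref{P3} with $q = 2$, $p_1 = 2$, ${\sf B} = 0$, and ${\sf A} = C_1{\sf F} + C_2$. Condition \ref{P4} is immediate from \ref{E5} and the parameter choices. Thus \Cref{thm:convergence theorem}(i) delivers $\Exp[\|\nabla \env_{\theta\psi}(\vx^k)\|] \to 0$.

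\textbf{Phase III (almost sure convergence).} The plan is to mimic the decomposition \eqref{eq:decom update} with $p_1 = p_2 = 1$, setting $\bA_k = \alpha_k^{-1}(\vx^{k+1}-\vx^k - \Exp[\vx^{k+1}-\vx^k \mid \mathcal F_k])$ and $\bB_k = \alpha_k^{-1}\Exp[\vx^{k+1}-\vx^k \mid \mathcal F_k]$. By Jensen's inequality, $\Exp[\|\bA_k\|^2] \leq 4\alpha_k^{-2} \Exp[\|\vx^{k+1}-\vx^k\|^2]$ is uniformly bounded by the Phase II bound, and $\|\bB_k\|^2 \leq \alpha_k^{-2} \Exp[\|\vx^{k+1}-\vx^k\|^2 \mid \mathcal F_k] \leq C_1 [\env_{\theta\psi}(\vx^k)-\bar\psi] + C_2$ almost surely. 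Since $\env_{\theta\psi}(\vx^k) \to e^\star$ almost surely, this implies $\limsup_{k} \|\bB_k\|^2 < \infty$ almost surely, so \ref{P3'} holds with $q = 2$ and $b = 0$. Condition \ref{P4'} follows from \ref{E5}, and \Cref{thm:convergence theorem}(ii) then gives $\|\nabla\env_{\theta\psi}(\vx^k)\| \to 0$ almost surely.

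\textbf{Expected obstacles.} Since the hard analytical estimates are already in place, the main point to verify carefully is that the derived recursion and conditional step-length bound persist with the same structural shape $C_1 \alpha_k^2[\env_{\theta\psi}(\vx^k)-\bar\psi] + C_2\alpha_k^2$ uniformly across both \ref{F1} and \ref{F2}, so that a single application of the unified theorem covers both cases. One should also confirm that the full admissible range $\theta \in (0,(\eta+\rho+\tau)^{-1})$ (rather than the slightly smaller intervals used to derive the intermediate inequalities in \Cref{app:smm-extension}) remains compatible with all Moreau-envelope identities and the constants in those estimates, and that the smallness conditions on $\alpha_k$ employed there are automatically satisfied for $k$ sufficiently large under \ref{E5}.
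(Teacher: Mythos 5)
Your proposal is correct and follows essentially the same route as the paper: the paper also takes the recursion \eqref{eq:nice-extra} and the conditional step-length bound of the form $C_1\alpha_k^2[\env_{\theta\psi}(\vx^k)-\bar\psi]+C_2\alpha_k^2$ (derived separately under \ref{F1} and \ref{F2} in \cref{app:smm-extension}) and then concludes by running exactly the three-phase verification of \Cref{thm:convergence theorem} as in the $\PSGD$ analysis, with $a=2$, $q=2$, $p_1=2$, ${\sf B}=0$ in Phase II and the decomposition \eqref{eq:decom update} with $q=2$, $b=0$ in Phase III. Your noted concern about the range of $\theta$ is moot, since those estimates are already derived for all $\theta\in(0,(\eta+\rho+\tau)^{-1})$ with $\zeta=\eta+\rho$.
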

}

{
\section{Comparison: related literature}\label{appen:table}

\begin{table*}[!htp]\caption{Summary and comparison of related and representative literature.}\label{table:literature}
	\begin{center}
			\small{
			\begin{tabular}{cccc}
				\specialrule{1pt}{0pt}{0pt} \\[-1.5ex]
				 \multirow{2}*{$\SGD$} & \multirow{2}*{Assumptions} & \multicolumn{2}{c}{Convergence} \\ \cmidrule(lr){3-4}
				  & & in expectation & almost surely \\[0.5ex]
				\hline \\[-1.5ex]
				\cite{BerTsi00} &\btab{c}  \ref{A1}, \ref{A2}, \ref{A4},  \\  bounded variance (\ref{A3} with ${\sf C} = 0$)\etab &  \xmark &  \cmark  \\[0.5ex]
				\hline \\[-1.5ex]
				\cite{mertikopoulos2020}&  \btab{c}  \ref{A1}, \ref{A4} \\  $f$ is coercive  ($\implies$ \ref{A2}) \\ $f$ is Lipschitz \\ $\liminf_{\|x\|\to \infty} \|\nabla f(x)\| >0$  \\  bounded variance (\ref{A3} with ${\sf C} = 0$) \etab
				& \xmark &\cmark 
				\\[0.5ex]
				\hline \\[-1.5ex]
			    \cite{BotCurNoc18}&  \btab{c}  \ref{A1}, \ref{A2},  \ref{A4}  \\ $f$ is twice differentiable \\ $x\mapsto \nabla^2 f(x) \nabla f(x)$ is Lipschitz \\  bounded variance (\ref{A3} with ${\sf C} = 0$) \etab
				& \cmark &\xmark 
				\\[0.5ex]
				\hline \\[-1.5ex]
				This work&  \ref{A1}--\ref{A4}
				& \cmark &\cmark 
				\\[0.5ex]
				\specialrule{1pt}{0pt}{0pt} \\[-0.5ex]
				\specialrule{1pt}{0pt}{0pt} \\[-1.5ex]
				 \multirow{2}*{$\RR$} & \multirow{2}*{Assumptions} & \multicolumn{2}{c}{Convergence} \\ \cmidrule(lr){3-4}
				  & & in expectation & almost surely \\[0.5ex]
				\hline \\[-1.5ex]
				\cite{LiMiQiu21} & \ref{B1}--\ref{B2} &  \xmark &  \cmark \\[0.5ex]
				\hline \\[-1.5ex]
				This work & \ref{B1}--\ref{B2} &  \cmark &  \cmark  \\[0.5ex]
				\specialrule{1pt}{0pt}{0pt} \\[-0.5ex]
				\specialrule{1pt}{0pt}{0pt} \\[-1.5ex]
				\multirow{2}*{$\PSGD$} & \multirow{2}*{Assumptions} & \multicolumn{2}{c}{Convergence} \\ \cmidrule(lr){3-4}
				  & & in expectation & almost surely \\[0.5ex]
				\hline \\[-1.5ex]
				\cite{MajMiaMou18} & \btab{c} \ref{C1}, \ref{C3}, \ref{C5} \\ 
				$\{\vx^k\}_{k\geq 0}$ is surely bounded \\ almost surely bounded noise  \etab&  \xmark &  \cmark  \\[0.5ex]
				\hline \\[-1.5ex]
				This work & \ref{C1}--\ref{C5} &  \cmark &  \cmark  \\[0.5ex]
				\specialrule{1pt}{0pt}{0pt} \\[-0.5ex]
				\specialrule{1pt}{0pt}{0pt} \\[-1.5ex]
				 \multirow{2}*{$\SMM$} & \multirow{2}*{Assumptions} & \multicolumn{2}{c}{Convergence} \\ \cmidrule(lr){3-4}
				  & & in expectation & almost surely \\[0.5ex]
				\hline \\[-1.5ex]
				\cite{DucRua18} & \btab{c} \ref{D3}, \ref{D5} \\ surely one-sided accuracy ($\implies$ \ref{D1}) \\ $\varphi$ is convex ($\implies$ \ref{D2})) \\  $\{\vx^k\}_{k\geq 0}$ is surely bounded (compact constraint) \\  density / Sard-type condition \etab&  \xmark &  \cmark  \\[0.5ex]
				\hline \\[-1.5ex]
				This work & \ref{D1}--\ref{D5} &  \cmark &  \cmark  \\[0.5ex]
				\specialrule{1pt}{0pt}{0pt} \\[-1.5ex]
			\end{tabular}
			}
	\end{center}
\end{table*}

}

\vspace{-0.4cm}
{
\section{Non-asymptotic complexity vs. asymptotic convergence}\label{appen:motivations}
\vspace{-0.2cm}
In this subsection, we provide several additional arguments and illustrations that can help to explain and illuminate the potential differences between typical finite-step complexity rates and asymptotic convergence results --- as obtained in \Cref{thm:convergence theorem}. To this end, we consider the standard optimization problem 
\e \label{eq:app-smooth} \min_{\vx \in \Rn} f(\vx), \ee 
where $f : \R^n \to \R$ is a given, smooth, and nonconvex function. As motivated in the introduction, complexity bounds for the nonconvex problem \eqref{eq:app-smooth} typically take the form      
%
%
%
\e \label{eq:app-bounds} \min_{k = 0,\dots,T} \Exp[\|\nabla f(\vx^k)\|^2] = \mathcal O(({T+1})^{-\frac12}) \quad \text{or} \quad \Exp[\|\nabla f(\vx^{\bar k})\|^2] \leq \mathcal O(({T+1})^{-\frac12}).  \ee
Here, $T$ denotes the total number of iterations, the index $\bar k$ is sampled uniformly at random from $\{0,\dots,T\}$, and the iterates $\{\vx^k\}_{k \geq 0}$ are assumed to be generated by the stochastic gradient descent method, see \cite{ghadimi2013,BotCurNoc18,lei2019stochastic,khaled2020better}. Similar (deterministic) complexity results are also available for the basic gradient descent method, \cite{Beck2017}, and many other related algorithmic schemes, \cite{GhaLanZha16,DavDru19,mishchenko2020,nguyen2020unified}. In particular, for the gradient descent method, the complexity bounds \eqref{eq:app-bounds} can be strengthened to 
\e \label{eq:app-bounds-det} \min_{k = 0,\dots,T} \|\nabla f(\vx^k)\|^2 = \mathcal O(({T+1})^{-1}),  \ee
see, e.g., \cite[Theorem 10.15]{Beck2017}.

While the complexity bounds shown in \eqref{eq:app-bounds} (and \eqref{eq:app-bounds-det}) allow to capture and characterize the overall trend of the minimization procedure during the first $T$ iterations, they cannot fully justify a common practice in stochastic optimization: the last iterate $\vx^T$ is returned as final output of the algorithm. In fact, even as $T$ increases, the term $\Exp[\|\nabla f(\vx^T)\|]$ can be arbitrarily large whereas the complexity measure $\min_{k = 0,\dots,T} \Exp[\|\nabla f(\vx^k)\|^2]$ decreases at its respective rate. The asymptotic convergence results 
\[ \lim_{k \to \infty} \Exp[\|\nabla f(\vx^k)\|] = 0 \quad \text{or} \quad \lim_{k \to \infty} \|\nabla f(\vx^k)\| = 0 \quad \text{almost surely}, \]
can provide additional information: as $\{\Exp[\|\nabla f(\vx^k)\|]\}_{k\geq 0}$ converges to zero, the term $\Exp[\|\nabla f(\vx^{T})\|]$ will stay small (below any predefined threshold) for all $T$ sufficiently large. In tandem with the complexity bounds \eqref{eq:app-bounds}, this supports common output strategies that return the last iterate $\vx^T$ --- at least for large $T$. Hence, both non-asymptotic and asymptotic convergence analyses are important and informative --- especially in the nonconvex and stochastic setting --- 
and the combination of non-asymptotic and asymptotic convergence guarantees can paint a more complete picture of the convergence behavior of stochastic optimization methods. 

We continue with a specific example that can illustrate the mentioned discrepancies and differences between non-asymptotic complexity and asymptotic convergence results. In particular, we construct a nonconvex function $f : \R \to \R$, a corresponding step size sequence $\{\alpha_k\}_{k\geq 0}$, and an initial point $\vx^0 \in \R$, for which the standard gradient descent method generates a sequence of iterates $\{\vx^k\}_{k\geq 0}$ that satisfies the non-asymptotic complexity bound \eqref{eq:app-bounds-det}, but we can \textit{not} observe asymptotic convergence $f^\prime(\vx^k) \to 0$. 

\begin{figure}[t]
	\setlength{\abovecaptionskip}{-3pt plus 3pt minus 0pt}
	\setlength{\belowcaptionskip}{-10pt plus 3pt minus 0pt}
	\centering
	\subfigure[]{\includegraphics[width=6.7cm]{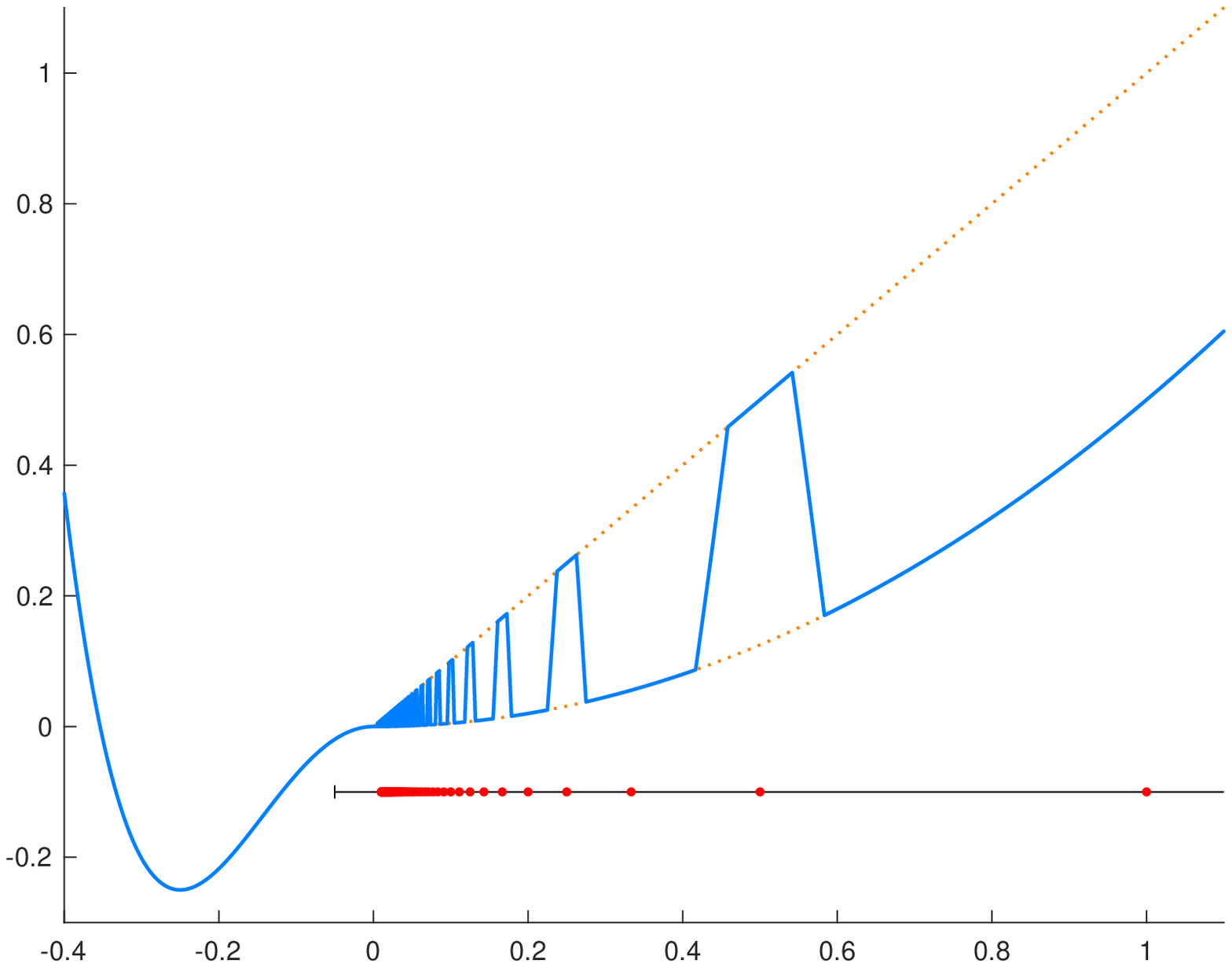}} \hspace{2.5ex}
	\subfigure[]{\includegraphics[width=6.7cm]{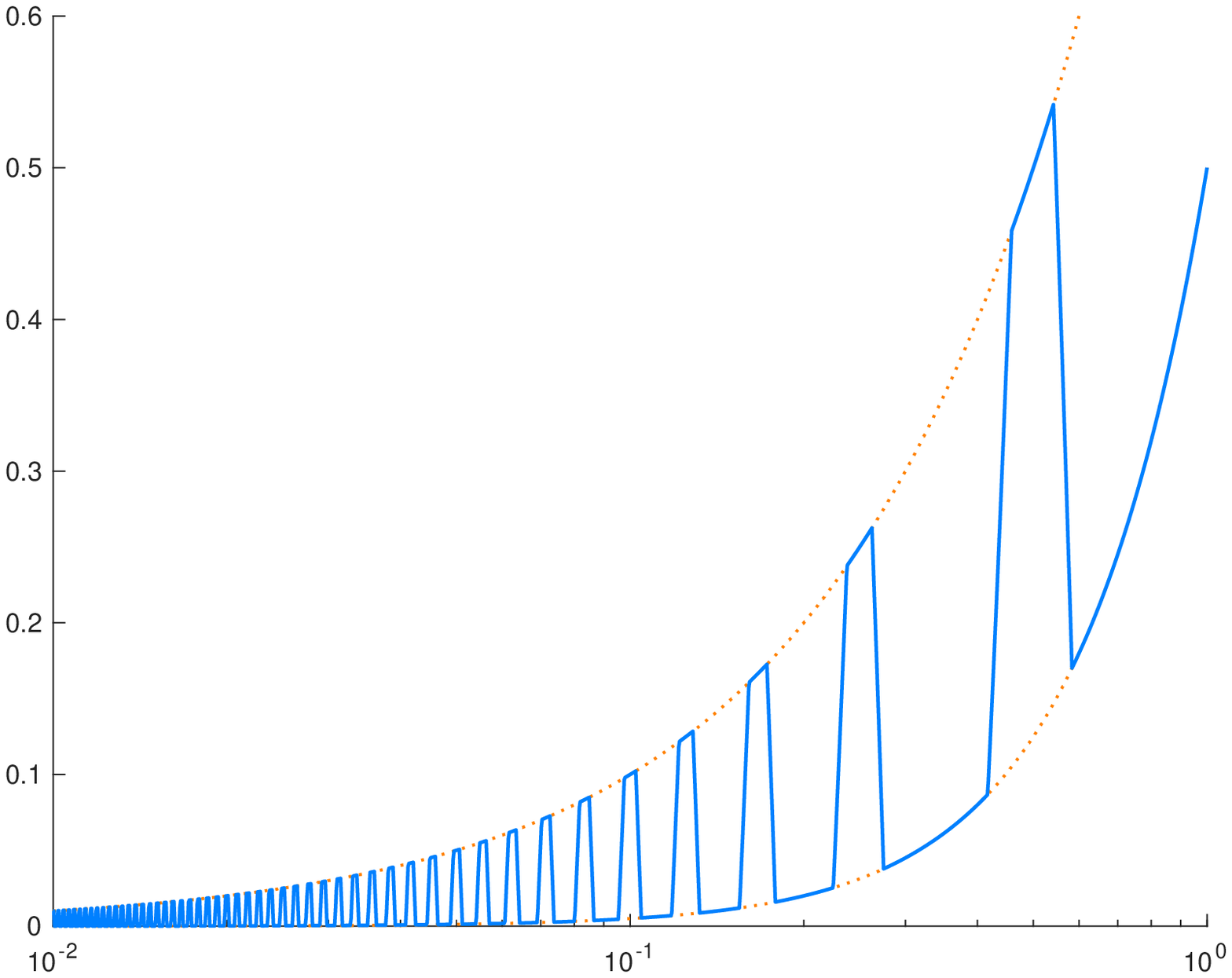}}
	\caption{Plot of $f$. The function $f$ is depicted using a blue color. The dotted orange lines correspond to the functions $x \mapsto x$ and $x \mapsto \frac12 x^2$, respectively. The read points in subfigure (a) show the iterates $\vx^k$, $k \in \N$. Subfigure (b) shows a logarithmic plot of $f$ on $[0.01,1]$.}
	\label{fig:example}
\end{figure}

Let $0 \leq \nu < \kappa$ be given parameters. We consider the functions
\[ c(x) := \begin{cases} e^{-\frac1x} & \text{if } x > 0, \\ 0 & \text{otherwise}, \end{cases} \quad \text{and} \quad \bar c_{\kappa,\nu}(x) := \frac{c(\kappa^2-x)}{c(\kappa^2-x)+c(x-\nu^2)}. \]
The mappings $c$ and $\bar c_{\kappa,\nu}$ are obviously $C^\infty$ and we have $\bar c_{\kappa,\nu}(x) = 0$ for all $x \geq \kappa^2$ and $\bar c_{\kappa,\nu}(x) = 1$ for all $x \leq \nu^2$. Moreover, it holds that 
\[ c^\prime(x) = \begin{cases} \frac{1}{x^2} e^{-\frac1x} & \text{if } x > 0, \\ 0 & \text{otherwise}, \end{cases} \quad \text{and} \quad \bar c^\prime_{\kappa,\nu}(x) = - \frac{c^\prime(\kappa^2-x)c(x-\nu^2)+c(\kappa^2-x)c^\prime(x-\nu^2)}{(c(\kappa^2-x)+c(x-\nu^2))^2}. \]
We now define $\kappa_j := \frac{1}{4j(2j+1)}$, $\nu_j := \frac{1}{8j(2j+1)}$, $g(x) := x-\frac12 x^2$, $\gamma_j(x) := g(x) \bar c_{\kappa_j,\nu_j}((x-\frac{1}{2j})^2)$, and
\[ f(x) := \begin{cases} h(x) + \sum_{k=1}^\infty \gamma_k(x) & \text{if } x \neq 0, \\ 0 & \text{if } x = 0, \end{cases}   \quad \text{and} \quad h(x) := \begin{cases} \frac12 x^2 & \text{if } x \geq 0, \\ 8x^2(8x^2-1) & \text{if } x < 0. \end{cases} \]
An exemplary plot of the function $f$ is shown in \Cref{fig:example}. The function $f$ is continuous on $\R$ and smooth for all $x \neq 0$. We now want to run gradient descent on $f$ with initial point $\vx^0 = 1$ and diminishing step sizes
\[ \alpha_k := \frac{1}{k+2}, \;\; \text{if $k$ is even} \quad \text{and} \quad  \alpha_k := \frac{1}{(k+1)(k+2)}, \;\; \text{if $k$ is odd.} \] 
Then, it follows
\begin{equation} \label{eq:app-example} \vx^k = \frac{1}{k+1} \quad \text{and} \quad f^\prime(\vx^k) = \begin{cases} 1 & \text{if $k$ is even}, \\ \frac{1}{k+1} & \text{if $k$ is odd}. \end{cases} \end{equation}
We now verify this statement by induction. We first notice that the functions $\{\gamma_k\}_{k\geq 0}$ and derivatives $\{\gamma_k^\prime\}_{k\geq0}$ have disjoint supports $[\frac{1}{2k}-\frac{1}{4k(2k+1)},\frac{1}{2k}+\frac{1}{4k(2k+1)}] = [\frac{1}{2}(\frac{1}{2k}+\frac{1}{2k+1}),\frac{1}{2}(\frac{3}{2k}-\frac{1}{2k+1})]$ with center point $\frac{1}{2k}$, $k \in \N$. More specifically, we have
\[ \gamma_k(x) = \begin{cases} 0 & \text{if } |x-\frac{1}{2k}| \geq \frac{1}{4k(2k+1)}, \\ g(x) & \text{if } |x-\frac{1}{2k}| \leq \frac{1}{8k(2k+1)},  \end{cases} \quad \text{and} \quad \gamma_k^\prime (x) = \begin{cases} 0 & \text{if } |x-\frac{1}{2k}| \geq \frac{1}{4k(2k+1)}, \\ g^\prime(x) & \text{if } |x-\frac{1}{2k}| \leq \frac{1}{8k(2k+1)}.  \end{cases} \] 
Hence, \eqref{eq:app-example} clearly holds for the base case $k = 0$. Let us now assume that the induction hypothesis \eqref{eq:app-example} is true for some $k$ and let $k+1 = 2j$, $j \in \mathbb N$, be an even number. Then, we obtain
\begin{align*} \vx^{k+1} & = \vx^k - \alpha_k f^\prime(\vx^k) \\ &= \frac{1}{k+1} - \frac{1}{(k+1)(k+2)} \cdot \left[\frac{1}{k+1} + \gamma_j^\prime\left(\frac{1}{k+1}\right)\right] \\ & =  \frac{1}{k+1} - \frac{1}{(k+1)(k+2)} \cdot 1 = \frac{1}{k+2}. \end{align*}
Similarly, if $k+1 = 2j+1$, $j \in \mathbb N$, is odd, we then have $\frac{1}{2j}-\frac{1}{k+1} = \frac{1}{2j(2j+1)} > \frac{1}{4j(2j+1)}$ and $\frac{1}{k+1}-\frac{1}{2j+2} = \frac{1}{(2j+1)(2j+2)} > \frac{1}{(2j+2)(2j+3)}$. This yields
\[ \vx^{k+1} = \vx^k - \alpha_k f^\prime(\vx^k) = \frac{1}{k+1} - \frac{1}{k+2}\cdot \frac{1}{k+1} = \frac{1}{k+2}. \]
This finishes the proof of \eqref{eq:app-example}. We further notice 
\[ \sum_{k=0}^\infty \alpha_k \geq \sum_{k=0}^\infty \alpha_{2k} = \sum_{k=0}^\infty \frac{1}{2(k+1)} = \infty \quad \text{and} \quad \sum_{k=0}^\infty \alpha_k^2 < \infty. \]
Consequently, the step sizes $\{\alpha_k\}_k$ satisfy all standard requirements. In addition, we have 
\[ \sum_{k=0}^\infty \alpha_k |f^\prime(\vx^k)|^2 \leq \sum_{k=0}^\infty \frac{1}{(k+1)(k+2)} < \infty \quad \text{and} \quad \min_{k=0,\dots,T} |f^\prime(\vx^k)|^2 \leq \frac{1}{T^2}. \] 
Thus, the complexity results in \eqref{eq:app-bounds-det} obviously hold, but the gradient values $f^\prime(\vx^k)$ do not converge to zero. In particular, for even $T$, the last iterate $\vx^T$ satisfies $f^\prime(\vx^T) = 1$ which obstructs interpretability of $\vx^T$ and of the complexity bounds $\min_{k=0,\dots,T} |f^\prime(\vx^k)|^2 \leq \varepsilon$ or $|f^\prime(\vx^{\bar k})|^2 \leq \varepsilon$. Notice that the mapping $f$ is not Lipschitz smooth around $x = 0$ and hence, the convergence results in \Cref{thm:convergence theorem} are not applicable here. Of course, these observations have even higher significance in the stochastic setting, when evaluation of the bounds \eqref{eq:app-bounds} is generally restrictive or not possible within the algorithmic procedure.  

%

We conclude and summarize our discussion with a comment by Francesco Orabona on non-asymptotic and asymptotic convergence analyses for $\SGD$ (\cite{Orabona20}, blog post: ``\textit{Almost sure convergence of SGD on smooth non-convex functions}'', section 5 and 6, Oct. 05, 2020): 

\begin{center}
\hspace{-2ex}\begin{minipage}{0.95\linewidth}
\begin{tikzpicture}
      \node[fill=ivory!25, draw=ivory!50, rectangle, rounded corners, minimum width=3.5cm, minimum height=1.2cm, align=center,   
        inner sep=5pt] {\begin{varwidth}{\linewidth}\textit{``Note that the 20-30 years ago there were many papers studying the asymptotic convergence of SGD and its variants in various settings. Then, the taste of the community changed moving from asymptotic convergence to finite-time rates. As it often happens when a new trend takes over the previous one, new generations tend to be oblivious to the old results and proof techniques. The common motivation to ignore these past results is that the finite-time analysis is superior to the asymptotic one, but this is clearly false (ask a statistician!). It should be instead clear to anyone that both analyses have pros and cons.''}\end{varwidth}};
\end{tikzpicture}
\end{minipage}
\end{center}

}

\end{document}